\date{}
\newcommand{\Debug}{0}
\newcommand{\comment}[1]{}
\newcommand{\mymargin}[1]{
 \ifnum \Debug = 1
  \marginpar{%
    \begin{minipage}{\marginparwidth}\small%
      \begin{flushleft}%
        {\color{blue}#1}%
      \end{flushleft}%
   \end{minipage}%
  }%
 \fi
}%
\newcommand{\Cg}{Cayley graph}
\newcommand{\labtequ}[2]{
 \begin{equation} \label{#1} 	\begin{minipage}[c]{0.9\textwidth}  #2 \end{minipage} \ignorespacesafterend \end{equation} }
\newcommand{\Tr}[1]{Theorem~\ref{#1}}
\newcommand{\Sr}[1]{Section~\ref{#1}}
\newcommand{\Prr}[1]{Pro\-position~\ref{#1}}
\newcommand{\Cr}[1]{Corollary~\ref{#1}}
\newcommand{\Dr}[1]{De\-fi\-nition~\ref{#1}}
\newcommand{\pp}{partite presentation}
\newcommand{\bpp}{2-partite presentation}
\newcommand{\pcg}{partite \Cg}
\newcommand{\bpcg}{2-partite \Cg}
\newcommand{\pf}{partition-friendly}
\newcommand{\Spl}{PCay}
\title{Presentations for Vertex Transitive Graphs}
\author{
	Agelos Georgakopoulos\thanks{Supported by the European Research Council (ERC) under the European Union's Horizon 2020 research and innovation programme (grant agreement No 639046).} \textsuperscript{1} and Alex Wendland\textsuperscript{1,3}\\
	\newline\\
	\textsuperscript{1}Department of Mathematics, University of Warwick, Coventry, CV4 7AL, UK\\
	\textsuperscript{3}A.P.Wendland@warwick.ac.uk
}
\begin{document}

\maketitle
	
\begin{center}{\it With an appendix by 	Matthias Hamann and Alex Wendland}
\end{center}

\begin{abstract}
	We generalise the standard constructions of a \Cg\ in terms of a group presentation by allowing some vertices to obey different relators than others. The resulting notion of presentation allows us to represent every vertex transitive graph. As an intermediate step, we prove that every countably infinite, connected, vertex transitive graph has a perfect matching.
	
	Incidentally, we construct an example of a 2-ended cubic vertex transitive graph which is not a Cayley graph, answering a question of Watkins from 1990. 
\end{abstract}

\section{Introduction}
Every \Cg\ is (vertex-)transitive but the converse is not true, with the Petersen graph being a well-known example. A lot of research focuses on understanding how much larger the class of transitive graphs is or, what is essentially the same, on extending results from \Cg s to transitive graphs, see e.g.\ \cite{EsFiWhCoa,Le83,Mw09,Wa90} and references therein. 
This paper offers a new algebraic way of defining graphs, which we will prove to have the power to present all transitive graphs. 

The idea is to still define our graphs by means of generators and relators similarly to \Cg s defined via group presentations, but we now allow different vertices to obey different sets of relators. The fewer `types' of vertices we have the closer our graph is to being a \Cg. This is perhaps best explained with an example: in Figure~\ref{petersen-intro} we have directed and labelled the Petersen graph with two letters $r$ and $b$ that make it look almost like a \Cg. But a closer look shows that if we start at any exterior vertex $v$ and follow a sequence of edges labelled $brbrr$ then we return to $v$, while this is not true if $v$ is one of the interior vertices. In that case, $brrbr$ is an example of a word that gives rise to a cycle.

	\showFigTikz{
		\begin{tikzpicture}[scale = 0.7]
			\node (G-11) at (-2,0) [circle,fill=black,scale = 0.5] {};
			\node (G-12) at (2,0) [circle,fill=black,scale = 0.5] {};
			\node (G-13) at (3,3) [circle,fill=black,scale = 0.5] {};
			\node (G-14) at (0,5) [circle,fill=black,scale = 0.5] {};
			\node (G-15) at (-3,3) [circle,fill=black,scale = 0.5] {};
			\node (G-21) at (-1,1) [rectangle,fill=black,scale = 0.5] {};
			\node (G-22) at (1,1) [rectangle,fill=black,scale = 0.5] {};
			\node (G-23) at (2,3) [rectangle,fill=black,scale = 0.5] {};
			\node (G-24) at (0,4) [rectangle,fill=black,scale = 0.5] {};
			\node (G-25) at (-2,3) [rectangle,fill=black,scale = 0.5] {};
			\draw[red,->] (G-11) edge (G-12);
			\draw[red,->,thick] (G-12) edge (G-13);
			\draw[red,->,thick] (G-13) edge (G-14);
			\draw[red,->] (G-14) edge (G-15);
			\draw[red,->] (G-15) edge (G-11);
			\draw[red,<-] (G-21) edge (G-23);
			\draw[red,<-] (G-23) edge (G-25);
			\draw[red,<-] (G-25) edge (G-22);
			\draw[red,<-,thick] (G-22) edge (G-24);
			\draw[red,<-] (G-24) edge (G-21);
			\draw[blue,<->] (G-11) edge (G-21);
			\draw[blue,<->,thick] (G-12) edge (G-22);
			\draw[blue,<->] (G-13) edge (G-23);
			\draw[blue,<->,thick] (G-14) edge (G-24);
			\draw[blue,<->] (G-15) edge (G-25);
		\end{tikzpicture}}{petersen-intro}{
		The Petersen graph, labelled by two letters $r$ (for red) and $b$ (for blue). The cycle obtained  by reading the `relation' $rbrrb$ starting at the top square vertex is depicted in bold lines.}
		
This example motivates our definition of a \defin{\pp}, which prescribes a number of types of vertices, and a set of relators for each type. Moreover, it entails a set of generators, and for each generator $s$ it prescribes the type of end-vertex of an edge labelled $s$ for each type of starting vertex. 
The precise definition of partite presentations in the case where there are only two types of vertices, which we call 2-partite presentations, is given in \Sr{sec special}. The case with more classes is more involved, and it is given in \Sr{sec general}.

We show how each \pp\ defines a graph, by imitating the standard definitions of a \Cg\ via a group presentation: either as a quotient of a free group by the normal subgroup generated by the relators (Definition~\ref{specgrpdef}), or as the 1-skeleton of the universal cover of the presentation complex (Definition~\ref{spectopdef}). The resulting \defin{\pcg} is always regular, with vertex-degree determined by the generating set, and it admits a group of automorphisms acting on it regularly and with as many orbits as the number of types of vertices prescribed by its presentation (\Prr{genloctrans}). In particular, \bpp s always give rise to bi-Cayley graphs. We prove this, as well as a converse statement, in \Sr{sec Bi-Cayley}.

Our first main result says that our formalism of \pcg s is general enough to describe all vertex transitive graph:
\begin{thm} \label{finite-vertex-presentation}
	Every 
	countable, vertex transitive, graph has a \pp.\\
\end{thm}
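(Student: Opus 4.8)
The plan is to realise $G$ itself as the presentation complex of a \pp\ whose associated group is trivial. If a \pp\ $\mathcal{P}$ has presentation complex $X$ with $\pi_1(X)=1$, then $X$ is simply connected and hence equals its own universal cover, so by \Dr{spectopdef} the \pcg\ of $\mathcal{P}$ is just the $1$-skeleton of $X$. It therefore suffices to produce a \pp\ $\mathcal{P}$ whose set of types is $V(G)$, whose generators label the edges of $G$, and which has enough relators to make $X$ simply connected while its $1$-skeleton stays equal to $G$. The only real constraint here is that the labelling of the edges of $G$ by generators be compatible with the formalism of a \pp: for each generator $s$ and each type $v$ there must be exactly one $s$-labelled edge at $v$, so the edges carrying a fixed $s$ have to form a perfect matching of $G$ (if $s$ is declared involutory) or, after a consistent choice of orientation, a spanning $2$-regular subgraph of $G$ (if not); in either case one then sets $\tau(v,s)$ to be the other endpoint of the $s$-edge at $v$.

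So the key step is to partition $E(G)$ into classes each of which is a perfect matching or a spanning $2$-regular subgraph; the components of such a subgraph, all of whose degrees are $2$, are finite cycles and double rays, and each admits an orientation in which every vertex has a unique outgoing edge. Now $G$, being vertex transitive, is regular, of some valency $d\le\aleph_0$, and we may assume $G$ connected: in general $G$ is a disjoint union of pairwise isomorphic connected vertex transitive graphs, and one forms the corresponding disjoint union of presentation complexes. If $d$ is even, Petersen's $2$-factor theorem decomposes $G$ into $d/2$ spanning $2$-regular subgraphs; for infinite locally finite $G$ this has a standard analogue, obtained from a balanced orientation of $G$ and K\"onig's edge-colouring theorem for regular bipartite graphs. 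If $d$ is odd, we first delete a perfect matching --- one exists by the classical fact that finite connected vertex transitive graphs of even order have a perfect matching, and, crucially, by our intermediate theorem that every countably infinite connected vertex transitive graph has a perfect matching --- and then invoke the even case on what remains; and if $d=\aleph_0$ one argues directly. Orienting each $2$-regular class, we obtain a set $S$ of generators, each either a fixed-point-free permutation $s$ of $V(G)$ with $s(v)$ adjacent to $v$ or a fixed-point-free involution; setting $\tau(v,s)=s(v)$, the graph encoded by $(V(G),S,\tau)$ is exactly $G$.

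It remains to fix the relators. For each type $v$, let the relators of type $v$ be all the words over $S\cup S^{-1}$ read off the edge-labels along a closed walk of $G$ starting at $v$; by construction of $\tau$, each such walk does return to type $v$, so this is a legitimate choice. Gluing a $2$-cell along each of these relators kills $\pi_1$ of every component of $G$, so the resulting presentation complex $X$ is simply connected and still has $1$-skeleton $G$; by the first paragraph --- consistently with \Dr{specgrpdef} and \Prr{genloctrans} applied to the trivial group --- the \pcg\ of $\mathcal{P}$ is $G$, so $\mathcal{P}$ is the required \pp. The one genuinely difficult ingredient is deleting a perfect matching when $d$ is odd: this is precisely where the intermediate theorem on perfect matchings in countably infinite vertex transitive graphs is needed. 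The infinite form of Petersen's theorem is a routine extension, and everything else is just an unwinding of the definition of the presentation complex.
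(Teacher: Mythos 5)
Your proposal is correct and follows essentially the same route as the paper: vertex types indexed by $V(G)$, a decomposition of $E(G)$ into perfect matchings and oriented spanning $2$-regular subgraphs (the paper's ``partition-friendly weak multicycle colouring''), obtained via Petersen's $2$-factor theorem in the even case and by first deleting a perfect matching — supplied by the Godsil--Royle result in the finite case and by Theorem~\ref{inf-vertex-matching} in the infinite case — in the odd case, followed by taking all closed walks as relators so that the presentation complex is simply connected and coincides with its own universal cover. The only step you leave as ``argue directly,'' the non-locally-finite case, is handled in the paper by an explicit greedy decomposition into spanning unions of double rays, but this is routine and does not change the structure of the argument.
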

In general, for the proof of this we allow for the vertex types to be in bijection with the vertex set of the graph in question. It would be interesting to study how much the number of vertex types can be reduced, see \Sr{sec Conc}. As we remark there, there are vertex transitive graphs that require infinitely many vertex types in any \pp; most Diestel-Leader  graphs \cite{DiLeCon} have this property. 
In the converse direction, we show, in \Sr{sec Line}, that every line graph of a Cayley graph $\Gha$ admits a \pp\ with at most as many vertex types as the number of generators of $\Gha$. 

The proof of \Tr{finite-vertex-presentation} involves decomposing the edge-set into cycles. This decomposition is not obvious, and it is related to a conjecture of Leighton \cite{Le83} disproved by Maru\v{s}i\v{c} \cite{Ma81}; see \Sr{sec mcc} for more. To find such decomposition we had to generalise a result of \cite[Theorem 3.5.1]{GR01}, saying that every connected finite vertex transitive graph has a matching that misses at most one vertex, to infinite vertex transitive graphs, which might be of independent interest:
\begin{thm} \label{inf-vertex-matching}
	Every countably infinite, connected, vertex transitive graph has a perfect matching.
\end{thm}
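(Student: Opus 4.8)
The plan is to settle graphs of infinite degree by a direct greedy construction, and for locally finite graphs to reduce — via a compactness argument — to the statement that every finite set of vertices is saturated by some matching, which in turn is proved by an augmenting‑path/ray argument whose only serious input is an infinite analogue of \cite[Theorem 3.5.1]{GR01}.

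\emph{Reduction to the locally finite case.} If some vertex of $G$ has infinite degree then, since $G$ is countable and vertex transitive, every vertex has degree exactly $\aleph_0$. Fix an enumeration $v_1,v_2,\dots$ of $V(G)$ and build a matching greedily: at each step take the unmatched vertex of least index and match it to any of its neighbours not yet incident with the matching; such a neighbour exists because at any finite stage only finitely many vertices are touched. One checks by induction that $v_m$ is matched within the first $m$ steps, so the (length-$\omega$) construction yields a perfect matching. Hence we may assume from now on that $G$ is locally finite.

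\emph{Compactness reduction.} It suffices to prove $(\ast)$: every finite $W\subseteq V(G)$ is saturated by some matching of $G$. Indeed, fix an exhaustion $V(G_1)\subseteq V(G_2)\subseteq\cdots$ of $V(G)$ by finite sets. Every edge of a matching that covers a vertex of $V(G_n)$ lies in the finite set $N[V(G_n)]$; so, writing $\mathcal F_n$ for the set of matchings of $G[N[V(G_n)]]$ that saturate $V(G_n)$, condition $(\ast)$ says each $\mathcal F_n$ is non-empty, and restriction defines maps $\mathcal F_{n+1}\to\mathcal F_n$, making the $\mathcal F_n$ an inverse system of non-empty finite sets. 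An element of its inverse limit (non-empty by König's Lemma) is an increasing sequence of matchings whose union is a perfect matching of $G$.

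\emph{Saturating a finite set.} To prove $(\ast)$, fix $W=\{w_1,\dots,w_n\}$, start from the empty matching $M$, and process $w_1,\dots,w_n$ in turn: while the current $w_i$ is $M$-unsaturated, flip $M$ along an $M$-augmenting path from $w_i$ to another unsaturated vertex, or along an $M$-augmenting ray starting at $w_i$. Either operation saturates $w_i$ and changes the saturation status of no vertex other than (at most) the far endpoint of the path, which becomes saturated; so no earlier $w_j$ is lost, and after $\le n$ rounds $W$ is saturated. Thus everything reduces to the following \emph{Claim}: in a connected, infinite, locally finite, vertex-transitive graph, for every matching $M$ and every $M$-unsaturated vertex $w$ there is an $M$-augmenting path ending at another unsaturated vertex, or an $M$-augmenting ray starting at $w$.

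\emph{Proof of the Claim, and the main obstacle.} Run Edmonds' alternating/blossom search from $w$. Since $G$ is locally finite the search tree is locally finite, so if it were infinite it would, by König's Lemma, contain an infinite $M$-alternating ray from $w$, i.e.\ an $M$-augmenting ray; hence the search terminates. If it does not produce an augmenting path, then its frustrated terminal state (outer vertices, inner vertices, contracted blossoms) exhibits a finite set $S\subseteq V(G)$ such that $G-S$ has at least $|S|+1$ components of odd order, all finite and contained in the region explored by the search. It remains to show this cannot happen when $G$ is connected, infinite, locally finite and vertex transitive — that is, that such a $G$ has no finite Tutte obstruction. This is precisely the infinite counterpart of \cite[Theorem 3.5.1]{GR01}, which only gives deficiency at most $1$ in the finite case, and infiniteness is essential: a finite connected vertex-transitive graph of odd order really does have deficiency $1$. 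The expected route is to use that $G-S$ also has an infinite component (as $G$ is connected, infinite and locally finite) together with vertex-transitivity — e.g.\ translating any putative obstruction by automorphisms to produce obstructions of unbounded surplus and playing this against the global structure of $G$, or more structurally via an infinite substitute for the Gallai–Edmonds decomposition whose ``critical'' part is automorphism-invariant. Making this step rigorous is the crux of the argument.
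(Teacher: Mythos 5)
Your architecture is reasonable up to its final step, but the proof is incomplete: the Claim you reduce everything to is not established. After the frustrated Edmonds search you are left needing to show that a connected, infinite, locally finite, vertex transitive graph admits no finite set $S$ with more than $|S|$ finite odd components in $G-S$, and you explicitly defer this (``making this step rigorous is the crux''). This is not a routine remark one can wave at: it is exactly the point where vertex transitivity must do all the work, and it is essentially equivalent to the theorem itself (a perfect matching immediately rules out such an $S$, since each finite odd component must send a matching edge into $S$). The two routes you sketch --- translating obstructions by automorphisms to get unbounded surplus, or an infinite Gallai--Edmonds substitute --- are each a genuine piece of mathematics that you have not carried out, and it is not clear either one closes easily; for instance, translating $S$ by an automorphism produces another obstruction of the \emph{same} surplus, and combining overlapping obstructions to amplify the surplus requires an argument. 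So as written there is a gap precisely at the one place where the hypothesis of transitivity is consumed.

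For comparison, the paper's proof never confronts Tutte obstructions. It fixes a connected exhaustion $(B_n)$, orders matchings by their ``miss sequence'' $(m_n)$ (number of vertices of $B_n$ missed), and gets a maximum matching by Zorn's lemma via a diagonal/compactness argument. Alternating-path lemmas about symmetric differences of maximum matchings then show that if some vertex is missed by a maximum matching, then (by transitivity) \emph{every} vertex is, and a path-induction shows a maximum matching misses at most one vertex. Transitivity is then used a second time, in a soft way: the single missed vertex can be translated outside any $B_n$ by an automorphism, producing matchings covering arbitrarily large balls, and a final compactness step (the analogue of your inverse-limit argument) yields a perfect matching. If you want to salvage your approach, the most economical fix is probably to replace your unproven Claim by this ``miss at most one vertex, then push it to infinity'' mechanism rather than trying to exclude Tutte obstructions directly. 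Your greedy treatment of the infinite-degree case and your compactness reduction are both fine.
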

This result is proved in the Appendix, which can be read independently. The locally finite case had previously been obtained by Leemann \cite{Leemann}.
 
Incidentally,  we find a cubic (i.e.\ 3-regular) 2-ended vertex transitive graph which is not a Cayley graph, answering a question of Watkins \cite{Wa90}, recently revived by Grimmett \& Li \cite{GL16}. Although this construction does not explicitly use the theory developed in this paper, our study of \pp s helped us understand where to look for such examples.

\begin{thm} \label{cubic-2-ended-vertex-transitive-example}
	There exists a cubic 2-ended vertex transitive graph which is not a Cayley graph.
\end{thm}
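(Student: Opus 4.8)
The plan is to exhibit one explicit graph $G$ and then check the three required properties in turn, the last being by far the hardest.

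\emph{Construction.} I would build $G$ as a ``twisted line'' over a small finite pattern: fix a finite connected graph $B$ whose edges carry labels in a $2$-ended group $H$ (for concreteness one may think of $H=\mathbb{Z}$), and let $G$ be the associated regular $H$-cover --- equivalently, the graph obtained from a suitable \bpp\ over $H$, which is where the intuition comes from. Choosing $B$ cubic and the labels so that the cover is connected makes $G$ automatically cubic, and such an $H$-cover of a finite connected graph is quasi-isometric to $H$, hence has exactly two ends; so cubicness and $2$-endedness cost nothing beyond checking that the cover is connected (a greatest-common-divisor condition on the labels). The genuine design constraint is to make $B$ and the labelling subtle enough that the resulting vertex transitive $G$ nevertheless \emph{fails} to be a \Cg; this is why it is natural to take $B$ itself \emph{not} vertex transitive --- so that $G$ gains symmetry that $B$ lacks --- and to put an asymmetric ``twist'' into the labelling.

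\emph{Vertex transitivity.} Here I would simply display a generating set of automorphisms of $G$: the deck transformations give a copy of $H$ acting with finitely many orbits (one per vertex of $B$), and then finitely many further automorphisms, obtained by lifting those partial symmetries of $B$ that respect the labels up to the equivalences under which labellings lift (re-basing the labels, and inversion in $H$), chosen so that together with $H$ they act transitively on $V(G)$. That each listed map is an automorphism, and that the group they generate is transitive, is a finite and routine verification once the picture is drawn.

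\emph{Non-existence of a regular subgroup (the crux).} Suppose some $\Gamma\le\mathrm{Aut}(G)$ acted regularly. Since $G$ has exactly two ends, so does $\Gamma$, and by the classical classification of $2$-ended groups $\Gamma$ has a finite normal subgroup $K$ with $\Gamma/K\cong\mathbb{Z}$ or $\Gamma/K\cong D_\infty$. The $K$-cosets then partition $V(G)=\Gamma$ into finite ``blocks'' of size $|K|$, permuted by $\Gamma/K$, so $G$ carries a $\mathbb{Z}$- or $D_\infty$-indexed block structure along which it is invariant. On the other hand, any $2$-ended graph has its own canonical linear structure, read off from its tree of minimal edge cuts between the two ends (a $\mathbb{Z}$-indexed family, since there are two ends), and $\mathrm{Aut}(G)$ --- in particular $\Gamma$ --- must respect it. Comparing the two structures pins down $|K|$ and the induced subgraph on a block up to a short list, and forces the block-to-block attaching pattern of our $G$ to be realisable compatibly with such a block structure. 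The final step is to show that for our specific $G$ none of the surviving cases occurs: the twist built into the labelling of $B$ is an automorphism-invariant of $G$ that no virtually cyclic regular action can reproduce --- either because the induced block graph would have to be a \Cg\ of the small group $K$ and is not (a Petersen-type local obstruction), or because the twist would force $\Gamma/K$ to admit a feature absent from both $\mathbb{Z}$ and $D_\infty$ (such as a torsion element acting as a ``rotation'' of the line).

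I expect this last step to be the main obstacle, for two reasons. First, the $D_\infty$ quotient must be handled with care: it supplies an orientation-reversing element, which kills any obstruction resting on a consistent orientation of $G$'s line, so the invariant used has to be orientation-blind. Second, turning the informal ``twist'' into a genuinely canonical invariant (one that does not secretly depend on a choice of block structure) requires enough uniqueness of the minimal edge cuts, and hence of the canonical linear structure, for this particular $G$; establishing that uniqueness --- or else just enumerating the finitely many candidate triples $(K,\ \Gamma/K,\ \text{attaching data})$ and refuting each by hand --- is the part I would expect to be the most laborious.
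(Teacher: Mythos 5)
Your proposal is a programme rather than a proof, and the two places where the theorem actually lives are both left open. First, the statement is an existence claim, so everything hinges on producing a concrete graph; you describe a template (a $\bZ$-cover of a finite labelled cubic graph $B$ with an ``asymmetric twist'') but never specify $B$, the labelling, or the twist, so there is nothing to verify. Second, and more seriously, the non-Cayley argument is only sketched in the conditional: you invoke the classification of $2$-ended groups to reduce to virtually-$\bZ$ and virtually-$D_\infty$ regular actions, and then assert that ``the twist \ldots is an automorphism-invariant of $G$ that no virtually cyclic regular action can reproduce'' without defining that invariant or carrying out the case analysis. You yourself flag this as the main obstacle. As it stands the argument could not fail at a specific step because no specific step is taken; the gap is that the crux is deferred rather than done.

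For comparison, the paper's proof avoids the $2$-ended group classification entirely and is much more hands-on. It stacks $10$-cycles $L_n$ indexed by $n\in\bZ$, joined by a Petersen-like rule $v_{n,2k+1}\sim v_{n+1,4k+2}$; the doubling in the second coordinate is the ``twist''. It then shows by local arguments that every automorphism preserves the partition into layers and is uniquely determined by the images of two adjacent vertices of $L_0$, so that $\Aut(\Gha)=\langle\sigma,\tau\rangle$ for two explicit automorphisms and the stabiliser of a vertex has order $2$. Any transitive subgroup must contain one of exactly two candidates for ``shift by one layer'' and one of exactly two candidates for ``rotate $L_0$ by one'', and for each of the four resulting pairs one exhibits a nontrivial product fixing a vertex, so no transitive subgroup acts regularly. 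If you want to complete your route, you would need to (i) commit to an explicit $B$ and labelling, (ii) compute $\Aut(G)$ or at least a canonical $\Aut(G)$-invariant block structure (your appeal to ``minimal edge cuts'' needs a uniqueness proof for your specific $G$), and (iii) actually refute each candidate $(K,\Gamma/K)$; the paper's rigidity lemmas (layers are preserved; automorphisms are determined by two vertices) are precisely the tools that make step (iii) a finite check.
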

This is proved in section~\ref{sec 2-ended}, which  can again be read independently. All graphs in this paper (excluding the Appendix) are assumed to be locally finite, i.e.\ with finite vertex degrees, although much of our work could extend to the general case.

\comment{
Presentations of groups are a core concept to combinatorial group theory. They provide the ability to construct groups with certain properties without the need to explicitly find the group. \todo{expand this} 

When given a group $\Gpa$ and a generating set $\sS$, one can construct a Cayley graph $\Gha := \Cay(G,\sS)$. These graphs are vertex transitive, with $\Gpa$ acting regularly on them. Conversely as shown by Sabadussi \cite{Sa58} any graph is a Cayley graph of a regular subgroup of its automorphism group. 

When comparing knowledge about groups and vertex transitive graphs, a lot more is known about the former. For example, by investigating group presentations one can show there are finitely many finite extensions of finitely presented groups. When it comes to vertex transitive graphs, the analogous question is still open and has been extensively studied in the literature by Gardiner and Praeger, and Neganova and Trofimov amongst other authors \cite{GP95, Tr15, NT14}. By establishing an adequate definition for vertex transitive graph, we might be able to develop an analogous proof.

When defining the Cayley graph of a group, one obtains a natural edge colouring coming from the generators. Leighton \cite{Le83} tried to generalise this colouring to what he called a multicycle decomposition and conjectured all vertex transitive graphs have one. However, this was proved incorrect by Maru\v{s}i\v{c} \cite{Ma81}, with a counterexample given by the line graph of the Petersen graph. In this work, we generalise the concept introduced by Leighton and relate it to the graph presentations above.

The definition of a Cayley graph naturally extends to more general algebraic objects, namely magmas. A nice family of magmas are the so-called quasi-groups, whose elements have left and right inverses which are unique. The Cayley graph of a quasi-group with a set-wise associative generating set is vertex transitive. Like in the case of groups, those objects are classified by regular subsets of their automorphism groups, as proven by Gauyacq \cite[Theorem 1]{Ga97}. More generally if we have a left-quasi-group with a right identity, its Cayley graph is a vertex transitive graph. Moreover, any vertex transitive graph can be found as the Cayley graph of such a left-quasi-group with some assumption on the generating set, as proven by Mwambene \cite[Theorem 1]{Mw09}.

We look at the triangle replaced Petersen graphs and show the following.

\begin{thm} \label{TriP-cover}
	A non-Cayley vertex transitive $TriP(n,k)$ covers a Cayley graph if and only if $P(n,k)$ is bipartite.
\end{thm}

}

\section{Preliminaries}

\subsection{Graphs and automorphisms}
We work with the notion of graph as defined by Gersten \cite{Ge83}. A \emph{graph} $\Gha$ comprises a set of \defin{vertices} $V(\Gha)$, and a set of \defin{directed edges} $\overrightarrow{E}(\Gha)$, endowed with a fix point free involution $\inv: \overrightarrow{E}(\Gha) \rightarrow \overrightarrow{E}(\Gha)$ and a terminus map $\tau: \overrightarrow{E}(\Gha) \rightarrow V(\Gha)$. 
Sometimes we will express the elements of $\overrightarrow{E}(\Gha)$ as directed pairs $(v,w)$ with $v,w\in V(\Gha)$, in which case we tacitly mean that $\tau((v,w))=w$ and $(v,w)^{-1}=(w,v)$.

A directed edge $e\in \overrightarrow{E}(\Gha)$ is a \defin{loop}, if $\tau(e)= \tau(e^{-1})$. The \defin{degree} $d(v)$ of a vertex $v\in V(\Gha)$ is the cardinality of $|\tau^{-1}(v)|$. We say that $\Gha$ is $k$-regular if $d(v)=k$ for every $v\in V(\Gha)$.

To a graph $\Gha$ we can associate the set of undirected edges, $E(\Gha) := \overrightarrow{E}(\Gha)/\inv$. Thus $|\overrightarrow{E}(\Gha)|= 2|E(\Gha)|$ for every graph $\Gha$. 

Note that although we are talking about `directed edges', we are not talking about `directed graphs' in the sense of e.g.\ \cite{DiestelBook05}. Our edges can be thought of as undirected pairs of vertices, but our formalism allows us to distinguish between two orientations for each of them. Moreover, our formalism allows for multiple edges between the same pair of vertices, and multiple loops at a single vertex. Thus the pair $(V(\Gha), E(\Gha))$ is a multigraph in the sense of  \cite{DiestelBook05}.

A \defin{map of graphs} $\phi: \Gha \rightarrow \Ghb$ is a pair of maps $(\phi_V: V(\Gha) \rightarrow V(\Ghb), \phi_E: \overrightarrow{E}(\Gha) \rightarrow \overrightarrow{E}(\Ghb))$ where $\phi_E$ commutes with $\inv$ and $\phi_V \circ \tau = \tau \circ \phi_E$. For a graph $\Gha$, an \defin{endomorphism} is a map from $\Gha$ to itself, and it is called an \defin{automorphism} if $\phi_V$ and $\phi_E$ are bijections. The sets of these maps are denoted $\End(\Gha)$ and $\Aut(\Gha)$ respectively.

We say that $\Gha$ is \defin{vertex transitive} if $Aut(\Gha)$ acts transitively on $V(\Gha)$, and 
\defin{edge transitive} if $Aut(\Gha)$ acts transitively on $E(\Gha)$. We say that $\Gha$ is \defin{arc-transitive}, or \defin{symmetric}, if $Aut(\Gha)$ acts transitively on $\overrightarrow{E}(\Gha)$. We say $\Gha$ is \defin{semi-symmetric} if it is edge transitive and regular but not vertex transitive.

Given a set of undirected edges $S \subset E(\Gha)$ of a graph $\Gha$ an \defin{orientation} of $S$ is a subsets $O_S \subset \overrightarrow{E}(\Gha)$ such that $O_S/\inv = S$, and $O_S \cap O_S^{-1} = \emptyset$.

A \defin{walk} in $\Gha$ is an alternating sequence $v_0 e_1 v_1 \ldots e_n v_n$ of vertices and directed edges such that $\tau(e_i) = v_i$ and $\tau(e_i^{-1}) = v_{i-1}$ for every $1\leq i \leq n$.

\subsection{Groups and Cayley graphs}

Given a group $G$ and a subset $S \subset G$, we define the (right) \defin{\Cg} $\Gamma= \Cay(G,S)$ to be the graph with vertex set $V(\Gamma)= G$ and directed edge set $\overrightarrow{E}(\Gamma)= \{ (g,(gs)) \mid g\in G, s\in S\}$. Unless otherwise stated, we are not assuming that $S$ generates $G$, so that  \Cg s in this paper are not always connected. The group $G$ acts on $\Gamma$ by automorphisms, by multiplication on the left. 



\subsection{Colourings}
In this work a graph colouring will always refer to a colouring of the edges. A \defin{colouring of the undirected edges} of $\Gha$ is a map $c: E(\Gha) \rightarrow X$ whereas a \defin{colouring of the directed edges} is a map $c: \overrightarrow{E}(\Gha) \rightarrow X$, where $X$ is an arbitrary set called the set of colours.  
\comment{
If $S$ induces a 2-regular subgraph the orientation is assumed to be chosen so that $O_S$ forms a permutation on $V(\Gha \cap S)$. If we have a colouring $c: E(\Gha) \rightarrow X$ so that $c^{-1}(x)$ is 2-regular for all $x \in X$ via choice of orientation $O_x$ for each $c^{-1}(x)$ we can recover a colouring $c': \overrightarrow{E}(\Gha) \rightarrow \{x, x^{-1} \vert x \in X\}$ where
\[
c'(e) = \begin{cases} x & e \in O_x\\ x^{-1} & e^{-1} \in O_x \end{cases}.
\] 
Similarly if we have a colouring $c': \overrightarrow{E}(\Gha) \rightarrow X$ such that there is a bijection $\inv: X \rightarrow X$ such that $c'(e)^{-1} = c'(e^{-1})$ then one can define colouring $c: E(\Gha) \rightarrow X/\inv$.
}

\subsection{Covering spaces} \label{sec covers}

	A covering space (or cover) of a topological space $X$ is a topological space $C$ endowed with a continuous surjective map $\psi: C \rightarrow X$ such that for every $x \in X$, there exists an open neighbourhood $U$ of $x$, such that $\psi^{-1}(U)$ is the union of disjoint open sets in $C$, each of which is mapped homeomorphically onto $U$ by $\psi$.

Given a map of spaces $\phi: Y \rightarrow X$, and point $y \in Y$ such that $p(y) = x$, we obtain an induced map on the level of fundamental groups $\phi_{\ast}: \pi_1(Y,y) \rightarrow \pi_1(X,x)$  by composition. For a covering map $\phi$ we know that $\phi_{\ast}$  is injective \cite[Proposition 1.31]{HA02}. If $C$ is arc-connected, and $\pi_1(C,c) = 1$, i.e.\ $C$ is simply connected, we call $C$ the universal cover, which is well-known to be unique when it exists.

Given a cover $\psi: C \rightarrow X$ and a map $\phi: Y \rightarrow X$ (with $Y$ path connected and locally path connected) we obtain a lift $\tilde{\phi}: Y \rightarrow C$ (where $\phi = \psi \circ \tilde{\phi}$) of $\phi$ if and only if $\phi_{\ast}(\pi_1(Y,y)) \subset  \psi_{\ast}(\pi_1(C,c))$ \cite[Proposition 1.33]{HA02}. Moreover for any preimage $c \in \psi^{-1}(x)$ we can choose $\tilde{\phi}(y) = c$. 

Lastly we recall the classification of covering spaces:
	
\begin{thm} \label{cover-correspondence}
	(Hatcher \cite[Theorem 1.38]{HA02}) Let $X$ be a path-connected, locally path-connected, and semilocally simply-connected topological space. Then there is a bijection between the set of isomorphisms classes of path-connected covering spaces $\psi: C \rightarrow X$ and the set of subgroups (up to conjugation) of $\pi_1(X)$, obtained by associating the subgroup $\psi_{\ast}(\pi_1(C))$ to the covering space $C$. 
\end{thm}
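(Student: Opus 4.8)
The plan is to prove the correspondence in two directions, and to keep the argument honest about basepoints. Fix once and for all a basepoint $x_0 \in X$ and write $\pi_1(X) = \pi_1(X, x_0)$. To a path-connected cover $\psi : C \to X$, together with a choice of $c_0 \in \psi^{-1}(x_0)$, we associate $H := \psi_*(\pi_1(C, c_0))$; this is a subgroup of $\pi_1(X)$ because $\psi_*$ is injective for covering maps (the excerpt's Proposition 1.31). The first routine point is that the \emph{conjugacy class} of $H$ does not depend on the choice of $c_0$: if $c_1$ is another point of the fibre $\psi^{-1}(x_0)$, a path in $C$ from $c_0$ to $c_1$ projects to a loop $\gamma$ at $x_0$, and a direct check shows that the two subgroups are related by conjugation by $[\gamma]$. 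Conversely, every conjugate of $H$ arises from some choice of basepoint in the fibre. Hence the assignment $C \mapsto [H]$ onto conjugacy classes is well defined on isomorphism classes of covers.

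For \emph{injectivity} I would use the lifting criterion (Proposition 1.33) as the main engine. Suppose $\psi_1 : C_1 \to X$ and $\psi_2 : C_2 \to X$ yield the same conjugacy class; after moving the basepoints $c_1, c_2$ within their fibres we may assume $\psi_{1*}(\pi_1(C_1, c_1)) = \psi_{2*}(\pi_1(C_2, c_2))$ \emph{exactly}. The inclusion of subgroups then lets me lift $\psi_1$ through $\psi_2$ to a map $f : C_1 \to C_2$ with $\psi_2 \circ f = \psi_1$ and $f(c_1) = c_2$, and symmetrically lift $\psi_2$ through $\psi_1$ to $g : C_2 \to C_1$ with $g(c_2) = c_1$. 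The composite $g \circ f$ is a lift of $\psi_1$ through $\psi_1$ fixing $c_1$; since $\mathrm{id}_{C_1}$ is another such lift, uniqueness of lifts forces $g \circ f = \mathrm{id}_{C_1}$, and likewise $f \circ g = \mathrm{id}_{C_2}$. Thus $f$ is an isomorphism of covers, so the correspondence is injective.

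For \emph{surjectivity} I would first construct the universal cover $\tilde{X}$ as the set of homotopy classes rel endpoints of paths in $X$ beginning at $x_0$, with projection $[\gamma] \mapsto \gamma(1)$, topologised by declaring the basic open sets to be the classes obtained from extending a given $[\gamma]$ by paths inside a path-connected open set $U \subseteq X$. Then, for an arbitrary subgroup $H \le \pi_1(X)$, I would form the quotient $C_H := \tilde{X}/{\sim_H}$, where $[\gamma] \sim_H [\gamma']$ iff $\gamma(1) = \gamma'(1)$ and $[\gamma \cdot \overline{\gamma'}] \in H$, and check that the induced projection $\psi_H : C_H \to X$ is a covering map satisfying $\psi_{H*}(\pi_1(C_H)) = H$ at the basepoint $[\,\text{constant}\,]$. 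Taking $H = 1$ recovers $\tilde{X}$ itself, so existence of the universal cover is the special case and simultaneously the building block.

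The hard part will be surjectivity, and specifically the verification that these path-space quotients are \emph{genuine} covering spaces rather than merely continuous surjections. This is exactly where all three hypotheses on $X$ enter and must be used in concert: local path-connectedness makes the proposed basic open sets a legitimate basis and lets fibres be handled locally; semilocal simple-connectedness ensures that sufficiently small $U$ lift to disjoint homeomorphic sheets, giving evenly-covered neighbourhoods; and path-connectedness guarantees every point is reached by some class. By contrast, well-definedness and injectivity are essentially formal consequences of the injectivity and lifting statements already recorded in the excerpt, so the real work is concentrated in building the covers and confirming that $\psi_{H*}(\pi_1(C_H))$ lands precisely on $H$.
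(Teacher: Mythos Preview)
The paper does not prove this theorem at all: it is quoted verbatim as Hatcher's Theorem~1.38 and used as a black box in the covering-space arguments of Section~\ref{sec special}. So there is no ``paper's own proof'' to compare against.

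That said, your outline is the standard textbook argument (essentially Hatcher's own), and it is correct. You have identified the right division of labour: well-definedness of the conjugacy class via change of basepoint in the fibre; injectivity via the lifting criterion and uniqueness of lifts; and surjectivity via the path-space construction of $\tilde{X}$ and its quotients $C_H$. You are also right that the three hypotheses on $X$ are consumed precisely in the surjectivity step, when verifying that the quotient topology on $C_H$ makes $\psi_H$ a genuine covering map with evenly covered neighbourhoods. Nothing is missing at the level of strategy; a full write-up would only need to expand the checks that the basic sets form a basis, that the sheets over a small $U$ are disjoint and map homeomorphically, and that $\psi_{H*}$ hits exactly $H$.
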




\section{\bpp s} \label{sec special}

\subsection{Algebraic definition}
We start by recalling one of the standard definitions of a Cayley graph, in order to then adapt it into the definition of a \bpcg. 

Let $\Gpa$ be a group. A \defin{presentation} $\langle \sS \vert \sR \rangle$ of $\Gpa$ consists of a generating set $\sS \subset \Gpa$ and a \defin{relator set} $\sR \subset F_{\sS}$ such that $F_{\sS} / \langle \langle \sR \rangle \rangle = G$,  where $F_{\sS}$ denotes the free group with free generating set   $\sS$. For a group presentation $\langle \sS \vert \sR \rangle$, we can construct the \defin{Cayley graph} $\Cay\langle \sS \vert \sR \rangle$ in the following manner. Let 
$T_{\sS}$ be the $2\vert \sS \vert$-regular tree defined by  
\begin{align*}
V(T_{\sS}) & := F_{\sS}, \mbox{and }\\
\overrightarrow{E}(T_\sS) & := \{(w, ws) \vert w \in F_{\sS}, s \in \sS \cup \sS^{-1}\}.
\end{align*}
We endow $T_{\sS}$ with a colouring $c: \overrightarrow{E}(T_{\sS}) \rightarrow \sS \cup \sS^{-1}$ defined by $c(w,ws) = s$ and $c(ws,w) = s^{-1}$. 
Let $R := \langle \langle \sR \rangle \rangle$ be the normal closure of $\sR$ in $F_{\sS}$. Define an equivalence relation $\sim$ on $V(T_{\sS}) = F_{\sS}$ by letting $v \sim w$ whenever $v^{-1}w \in R$. Extend $\sim$ to $\overrightarrow{E}(T_{\sS})$ by demanding $e \sim d$ whenever $c(e) = c(d)$ and $\tau(e) \sim \tau(d)$ and $\tau(e^{-1}) \sim \tau(d^{-1})$. Then $\Cay\langle \sS \vert \sR \rangle$ can be defined as the quotient $T_{\sS}/\sim$. The corresponding  covering map is denoted by $\eta: T_{\sS} \rightarrow \Cay\langle \sS \vert \sR \rangle$. Note that as $\sim$ preserves $c$, we obtain a unique colouring $c': \overrightarrow{E}(\Cay\langle \sS \vert \sR \rangle) \rightarrow \sS \cup \sS^{-1}$ satisfying $c = c' \circ \eta$.

\medskip
This definition of the Cayley graph is standard. All Cayley graphs defined this way have even degrees:  involutions in $\sS$ give rise to pairs of `parallel' edges with the same end-vertices. However, in certain contexts it is desirable to replace such  pairs of parallel edges by single edges. To accommodate for this modification ---which is important for us later as we want to capture odd-degree graphs such as the Petersen graph with our presentations--- we now introduce  \defin{\invo\ presentations and Cayley graphs}

For a group presentation $\langle \sS \vert \sR' \rangle$, we define the \defin{\invo\ presentation} $P = \langle \sU, \sI \vert \sR \rangle$ where\\ $\sI := \{s \in \sS : s^2 \in \sR'\}$ and $\sU:= \sS \backslash \sI$. Define the corresponding \defin{\invo\ free group}\\ $MF_P := \langle \sS \vert \{s^2 : s \in \sI\} \rangle$. (Thus $MF_P$ is a free product of infinite cyclic groups, one for each $s\in \sU$, and cyclic groups of order 2,  one for each $s\in \sI$.) Let $\phi: F_{\sS} \rightarrow MF_P$ be the unique homomorphism extending the identity on $F_{\sS}$, as provided by the universal property of free groups, and let $\sR := \phi(\sR') \backslash \{1\} \subset MF_P$. Define the $\vert \sS \cup \sS^{-1} \vert$-regular tree  $T_{P}$ by 
\begin{align*}
V(T_{P}) & := MF_P\\
\overrightarrow{E}(T_P) & := \{(w, ws) \vert w \in MF_P, s \in \sS \cup \sS^{-1}\}.
\end{align*}
We proceed as above to define the colouring $c$ and the relation $\sim$, and obtain the  \defin{\invo\ Cayley graph} as the quotient $T_{P}/\sim$.


\medskip
We now modify the above construction of the Cayley graph, to obtain our \pcg s. The basic idea is to partition the vertex set into two (and later more than two) classes $V_1,V_2$, obeying different sets of relators $\sR_0, \sR_1$. This bipartition creates the need to distinguish our generators too into two classes $\sS_1, \sS_2$, the former corresponding to edges staying in the same partition class, and the latter corresponding to edges incident with both classes $V_1,V_2$.

We will formally define a \defin{\bpp} as a 4-tuple $P = \langle \sS_1, \sS_2 \vert \sR_0, \sR_1 \rangle$, and explain how this data is used to define a \pcg\ $\Sp(P)$, in analogy with the above definition of a Cayley graph $\Cay\langle \sS \vert \sR \rangle$ corresponding to a group presentation $P = \langle \sS \vert \sR \rangle$.  The set $\sS_1$ is an arbitrary set of `generators'. We partition $\sS_2$ into two sets, $\sS_2= \{\sU,\sI\}$, so that $\sS_1, \sU,\sI$ are pairwise disjoint. Their union $\sS:=  \sS_1 \cup \sU \cup \sI$  will be our set of \defin{generators}. The necessity of distinguishing $\sS_2$ into $\sU,\sI$ is to allow for some involutions, namely the elements of $\sI$, to give rise to single edges in our graphs, just like in the above definition of \invo\ Cayley graph. 

As in our definition of \invo\ Cayley graph, we let $MF_P := \langle \sS \vert \{s^2 : s \in \sI\} \rangle$. Let $\vert \cdot \vert_{\sS_2}$ be the unique homomorphism from $MF_P$ to $\bZ/2\bZ$ extending 
\[
\vert s \vert_{\sS_2} = \begin{cases} 0 & \mbox{ if } s \in \sS_1\\ 1 & \mbox{ if } s \in \sS_2 \end{cases}.
\]
We have that $K := Ker(\vert \cdot \vert_{\sS_2})$ is an index-two subgroup of $MF_P$, and so its cosets $\tilde{V_1}:= K$ and $\tilde{V_2} := \sS_2 K$ bipartition $MF_P$. 

\begin{defn} \label{spec sp pres def}
For any two sets $\sR_0, \sR_1 \subset K$, called \defin{relator sets}, we call the tuple $\langle \sS_1, \sS_2 \vert \sR_0, \sR_1 \rangle$ a \defin{\bpp}. 
\end{defn}
(The restriction $\sR_i \subset K$ does not have an analogue in the definition of Cayley graph; the intuition is that relators should start and finish at the same side of the bipartition $V_1,V_2$ because they are supposed to yield cycles in the graph.)

Given a \bpp\ $P = \langle \sS_1, \sU, \sI \vert \sR_0, \sR_1 \rangle$, recall that $MF_P = \langle \sS \vert \{ s^2 : s \in \sI \} \rangle$, and define the ($\vert \sS \cup \sS^{-1} \vert$-regular) tree $T_P$ by
	\begin{align*}
		V(T_P) & := MF_P\\
		\overrightarrow{E}(T_P) & := \{(w, ws) \vert w \in MF_P, s \in \sS \cup \sS^{-1}\}.
	\end{align*}
Define the subgroups
	\begin{align*}
		R_0 & \mbox{ to be the normal closure of } \sR_0 \cup \{ s r s^{-1} :  r \in \sR_1, \ s \in \sS_2\} \mbox{ in } K, \mbox{ and }\\
		R_1 & \mbox{ to be the normal closure of } \sR_1 \cup \{ s r s^{-1} :  r \in \sR_0, \ s \in \sS_2\}  \mbox{ in } K.
	\end{align*}
Here $R_i \leq K \leq MF_P$ is the analogue of the normal subgroup $R$ of $MF_P$ in the  definition of $\Cay(P)$, but now having two versions corresponding to our two classes of elements of $MF_P$, namely $\{\tilde{V_1}, \tilde{V_2}\} := \{K, \sS_2K \}$. In analogy with the relation $\sim$ above, we now write $v \sim w$ whenever $v^{-1}w \in R_i$ for $v,w \in \tilde{V_i}$. We extend $\sim$ to the edges of $T_P$ via $e \sim d$ if $c(e) = c(d)$, $\tau(e) \sim \tau(d)$, and $\tau(e^{-1}) \sim \tau(d^{-1})$. 
\begin{defn} \label{specgrpdef}
The \defin{\bpcg} $\Spx = \Sp(P) =: \Gha$ is the quotient $T_P/\sim$. 
\end{defn}
The edge set of $\Gha$ can thus be written as $\overrightarrow{E}(\Gha) = \overrightarrow{E}(T_P)/\sim$. 

As before, we have a natural colouring $c: \overrightarrow{E}(T_P) \rightarrow \sS \cup \sS^{-1}$ defined by $c(w, ws) = s$, and as $\sim$ preserves $c$, the latter factors into  $c': \overrightarrow{E}(\Gha) \rightarrow \sS \cup \sS^{-1}$, i.e.\ the  unique colouring satisfying $c = c' \circ \eta$ where again $\eta$ denotes the projection map corresponding to $\sim$.

Note that this is a generalisation of the \invo\ Cayley graph. When $\sI=\emptyset$ we have a  generalisation of the standard Cayley graph. 

Borrowing terminology from groupoids, we define the \defin{vertex groups} of our \pp\ to be $G_i := K / R_i $ for $i \in \bZ/2\bZ$. 

The condition $\sR_i \subset K$ implies that if $v \sim w$ then $v$ and $w$ belong to the same coset $\tilde{V_0}$ or $\tilde{V_1}$ of $K$ in $MF_P$ by the definitions. Thus factoring by $\sim$ projects the bipartition $\{\tilde{V_0}, \tilde{V_1}\}$ of $MF_P$ into a bipartition $\{{V_0}, {V_1}\}$ of $V(\Gha)$, with  $V_i:= \tilde{V}_i/\sim$. It follows from these definitions that $G_i$ is in canonical bijection with $V_i$.

As in the case of Cayley graphs, relators in the presentation yield closed walks in $\Gha$, but now we need to start reading our relators at the right side of the bipartition for this to be true: for every $i \in \bZ/2\bZ$ and each $r\in \sR_i$ and $v\in V_i$, if we start at $v$ and follow the directed edges of $\Gha$ with colours dictated by $r$ one-by-one, we finish our walk at $v$.

We now explain how the Petersen graph can be obtained as a \bpcg:
\begin{examp} \label{ex Pet 1}
	 Theorem \ref{Petersen-Split-Presentation} below asserts that the Petersen graph $P(5,2)$ (Figure~\ref{petersen-intro}) is isomorphic to\\ $\Sp(\langle \sS_1 = \{a\}, \sU = \emptyset, \sI = \{b\} \vert \sR_0 = \{a^5, aba^2b, b^2\}, \sR_1 = \{a^5\}\rangle) = \Sp\langle \{a\}, \emptyset, \{b\} \vert \{a^5, aba^2b\}, \{a^5\} \rangle$. For this presentation we have  
\begin{itemize}
\item $MF_P = \langle a, b \vert b^2 \rangle$, so that $T_P$ is the 3-regular tree; 
\item $K = \langle \langle a, bab \rangle \rangle \leq MF_P$; 
\item  $R_0 = \langle \langle a^5, aba^2b, ba^5b \rangle \rangle_{K}$, and 
\item  $R_1 = \langle \langle ba^5b, baba^2, a^5 \rangle \rangle_{K}$. 
\end{itemize}

The vertex groups $G_i = K / R_i$ are generated by any generating set of $K$, in particular by  $\{a,bab\}$. They abide by the relations that generate $R_i$ so in the case of $R_0$ these are $a^5$, $aba^2b = a(bab)^2$ and $ba^5b = (bab)^5$ (when we write them in terms of the generators of $K$). So we have 
	\begin{align*}
		G_0 = & \langle a, bab \vert a^5, a(bab)^2, (bab)^5 \rangle\\
		= & \langle bab \vert (bab)^{-10}, (bab)^5 \rangle & \mbox{as } a = (bab)^{-2}\\
		= & \bZ/5\bZ = \langle bab \rangle
	\end{align*}
	and similarly
	\begin{align*}
		G_1 = & \langle a, bab \vert (bab)^5, (bab)a^2, a^5 \rangle\\
		= & \langle a \vert a^{-10}, a^5 \rangle & \mbox{as } (bab) = a^{-2}\\
		= & \bZ/5\bZ = \langle a \rangle.
	\end{align*}
The fact that $G_0$ is isomorphic to $G_1$ is not a coincidence as we remark at the end of this section. In Figure~\ref{petersen-intro}, the  vertices depicted as square correspond to $V_0= \tilde{V}_0/\sim$,  and vertices depicted as circles correspond to $V_1= \tilde{V}_1/\sim$.
\end{examp}
	
\comment{	
	\showFigTikz{
		\begin{tikzpicture}[scale = 0.7]
			\node (G-11) at (-2,0) [circle,fill=black,scale = 0.5] {};
			\node (G-12) at (2,0) [circle,fill=black,scale = 0.5] {};
			\node (G-13) at (3,3) [circle,fill=black,scale = 0.5] {};
			\node (G-14) at (0,5) [circle,fill=black,scale = 0.5] {};
			\node (G-15) at (-3,3) [circle,fill=black,scale = 0.5] {};
			\node (G-21) at (-1,1) [rectangle,fill=black,scale = 0.5] {};
			\node (G-22) at (1,1) [rectangle,fill=black,scale = 0.5] {};
			\node (G-23) at (2,3) [rectangle,fill=black,scale = 0.5] {};
			\node (G-24) at (0,4) [rectangle,fill=black,scale = 0.5] {};
			\node (G-25) at (-2,3) [rectangle,fill=black,scale = 0.5] {};
			\draw[red,->] (G-11) edge (G-12);
			\draw[red,->,thick] (G-12) edge (G-13);
			\draw[red,->,thick] (G-13) edge (G-14);
			\draw[red,->] (G-14) edge (G-15);
			\draw[red,->] (G-15) edge (G-11);
			\draw[red,<-] (G-21) edge (G-23);
			\draw[red,<-] (G-23) edge (G-25);
			\draw[red,<-] (G-25) edge (G-22);
			\draw[red,<-,thick] (G-22) edge (G-24);
			\draw[red,<-] (G-24) edge (G-21);
			\draw[blue,<->] (G-11) edge (G-21);
			\draw[blue,<->,thick] (G-12) edge (G-22);
			\draw[blue,<->] (G-13) edge (G-23);
			\draw[blue,<->,thick] (G-14) edge (G-24);
			\draw[blue,<->] (G-15) edge (G-25);
		\end{tikzpicture}}{petersen-classes}{
		The Petersen graph $P(5,2)$ is isomorphic to $\Sp\langle \{\textcolor{red}{a}\}, \emptyset, \{\textcolor{blue}{b}\} \vert \{\textcolor{red}{a}^5, \textcolor{red}{a}\textcolor{blue}{b}\textcolor{red}{a}^2\textcolor{blue}{b}\}, \{\textcolor{red}{a}^5\} \rangle$. The  vertices depicted as square are in $V_0= \tilde{V}_0/\sim$,  and vertices depicted as circles are in $V_1= \tilde{V}_1/\sim$. The relation $aba^2b$ is highlighted for the top square vertex.}
}

Note that we have made $\sS$ a subset of the group $MF_P$, and so each $s\in \sS$ has an inverse $s^{-1}$ in $MF_P$. With these inverses in mind we define $\sS^{-1}:= \{s^{-1} : s\in \sS\}$. Note that $s = s^{-1} $ exactly when $s\in \sI$. Moreover, as $\sS_1 \subset K$ and $G_i = K/R_i$, we can think of $\sS_1$ as a subset of $G_i$ in the following proposition:

\begin{prop} \label{Cay G0}
	For every \bpp\ $P = \langle \sS_1, \sU, \sI \vert \sR_0, \sR_1 \rangle$, the subgraph of $\Gha := \Sp(P)$ with edges coloured by $\sS_1 \cup \sS_1^{-1}$ is isomorphic to the disjoint union of $\Cay(G_0, \sS_1)$ and $\Cay(G_1, \sS_1)$.
\end{prop}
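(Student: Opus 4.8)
The plan is to unwind the quotient construction $T_P/\sim$ and observe that, once we discard all generators outside $\sS_1$, the relation $\sim$ on the remaining subgraph becomes precisely the coset relation defining two ordinary Cayley graphs. First I would note that since $\sS_1\subset K$, reading a word in $\sS_1\cup\sS_1^{-1}$ starting at a vertex of $\tilde V_i$ keeps us inside the coset $\tilde V_i$ (because $|s|_{\sS_2}=0$ for $s\in\sS_1$); hence the subgraph of $T_P$ spanned by the edges coloured from $\sS_1\cup\sS_1^{-1}$ already splits as the disjoint union of its restrictions to $\tilde V_0=K$ and $\tilde V_1=\sS_2 K$. Each of these restrictions is, by the very definition of $\overrightarrow E(T_P)$, a copy of the Cayley graph $\Cay(MF_P,\sS_1)$ translated into the respective coset: the $K$-part is literally $\{(w,ws)\mid w\in K,\ s\in\sS_1\cup\sS_1^{-1}\}$, which is $\Cay(K,\sS_1)$ viewed with $\sS_1$ as a subset of $K$, and the $\sS_2K$-part is its left translate by any fixed element $t\in\sS_2$, hence isomorphic to $\Cay(K,\sS_1)$ as well via $w\mapsto t^{-1}w$.

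Next I would pass to the quotient. On $\tilde V_i=\tilde V_i$ the relation $\sim$ is exactly $v\sim w\iff v^{-1}w\in R_i$, i.e.\ the partition of $K$ (resp.\ of $\sS_2K$, after the translation above) into right cosets of $R_i$. Since $R_i\trianglelefteq K$ and $G_i=K/R_i$, quotienting $\Cay(K,\sS_1)$ by this relation on vertices, together with the induced relation on edges (two $\sS_1$-edges are identified iff they have the same colour and $\sim$-equivalent endpoints), yields precisely $\Cay(G_i,\sS_1)$, with $\sS_1$ now read as a subset of $G_i$ via the canonical projection $K\to G_i$. This is the standard fact that the Cayley graph of a quotient group is the corresponding quotient of the Cayley graph of the group; I would cite the analogous computation already made for ordinary (\invo) Cayley graphs earlier in Section~\ref{sec special}, applied here with $K$ in place of $F_\sS$ and $R_i$ in place of $R$. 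The key compatibility to check is that the edge-identification rule of $\sim$, restricted to $\sS_1$-coloured edges, does not identify an $\sS_1$-edge with one of a different colour and does not collapse a non-loop to a loop except when forced by $G_i$; this is immediate because $c$ is preserved by $\sim$ and because the endpoints of an $\sS_1$-edge $(w,ws)$ project to $wR_i$ and $wsR_i$, which coincide iff $s\in R_i$, exactly the condition for a loop in $\Cay(G_i,\sS_1)$.

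Finally I would assemble the two pieces: the subgraph of $\Gha$ with edges coloured by $\sS_1\cup\sS_1^{-1}$ has vertex set $V_0\sqcup V_1$ (the images of $\tilde V_0,\tilde V_1$, which remain disjoint since $\sim$ refines the bipartition, as already observed in the text after Definition~\ref{specgrpdef}), and its edges are the $\sim$-classes of $\sS_1$-edges of $T_P$, each of which lives entirely in one side; so the subgraph is the disjoint union of the quotient of the $\tilde V_0$-part and the quotient of the $\tilde V_1$-part, i.e.\ $\Cay(G_0,\sS_1)\sqcup\Cay(G_1,\sS_1)$, and the isomorphism is the one induced by $\eta$ together with the canonical bijections $G_i\leftrightarrow V_i$. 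The main obstacle I anticipate is bookkeeping rather than conceptual: being careful that the left translation by $t\in\sS_2$ identifying the $\sS_2K$-side with $\Cay(K,\sS_1)$ is compatible with the relation $R_1$ used on that side — here one uses that $R_1$ was defined inside $K$ and that conjugation by $\sS_2$ swaps the roles of $R_0$ and $R_1$ in the right way — so that the quotient of the translated side is genuinely $\Cay(G_1,\sS_1)$ and not something twisted. Once that is pinned down, the rest is a routine diagram chase.
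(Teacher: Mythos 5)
Your proposal is correct and follows essentially the same route as the paper's proof: restrict to the $\sS_1$-coloured subgraph of $T_P$, use the canonical identification of $V_0=\tilde V_0/\sim$ with $G_0=K/R_0$ to recognise the quotient as $\Cay(G_0,\sS_1)$, and handle the $V_1$ side by left-translating by a fixed element of $\sS_2$. Your version spells out the ``Cayley graph of a quotient group'' step and the compatibility of the translation with $R_1$ in more detail, but these are elaborations of the same argument rather than a different approach.
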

\begin{proof}
	Let $T_i$ be the subgraph of $T_P$ induced by the vertices of $\tilde{V}_{i}$, and $\Gha_i$ be the subgraph of $\Gha$ induced by $V_i = \tilde{V}_i/\sim$. We will show that $\Gha_i$ is isomorphic to $\Cay(G_i, \sS_1)$. 
	
	To begin with, recall that $\tilde{V}_0=K$ and $G_0=K / R_0$, and so $V_0$ is canonically identified with $G_0$. Thus to show that $\Gha_0$ is isomorphic to $\Cay(G_0, \sS_1)$, we need to check that $(v,w)$ is a directed edge of $\Gha_0$ coloured $s$ whenever  $w=v s$. The latter holds whenever $v's \in \eta^{-1}(w)$ for every $v'\in \eta^{-1}(v)$, which is exactly when $(v',v's)$ is  a directed edge of $T_P$ coloured $s$. This in turn is equivalent to $(v,w)$ being a directed edge of $\Gha_0$ coloured $s$ because $c = c' \circ \eta$. 
	
	This proves that $\Gha_0$ is isomorphic to $\Cay(G_0, \sS_1)$. To prove that $\Gha_1$ is isomorphic to $\Cay(G_1, \sS_1)$ we repeat the same argument multiplying on the left with a fixed element of $\sS_2$ throughout. Since $V(\Gha)$ is the disjoint union of $V_0$ and $V_1$, our statement follows.
\end{proof}

\begin{prop} \label{reg}
	For every \bpp\ $P = \langle \sS_1, \sU, \sI \vert \sR_0, \sR_1 \rangle$, the graph $\Gha := \Sp(P)$ is $\vert \sS \cup \sS^{-1} \vert$-regular. 
\end{prop}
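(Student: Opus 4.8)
The plan is to show that the projection $\eta\colon T_P\to\Gha$ restricts, at each vertex, to a bijection on incident edges; since $T_P$ is $\vert\sS\cup\sS^{-1}\vert$-regular by construction, this gives $d(v)=\vert\tau^{-1}(v)\vert=\vert\sS\cup\sS^{-1}\vert$ for every $v\in V(\Gha)$, which is the claim. Fix $v\in V(\Gha)$ and a preimage $\tilde v\in\eta^{-1}(v)\subseteq MF_P$. The directed edges of $T_P$ with terminus $\tilde v$ are exactly the $\vert\sS\cup\sS^{-1}\vert$ pairwise distinct edges $(\tilde v s^{-1},\tilde v)$, $s\in\sS\cup\sS^{-1}$, and $(\tilde v s^{-1},\tilde v)$ carries colour $s$. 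I claim $\eta$ maps this set bijectively onto $\tau^{-1}(v)$, the set of directed edges of $\Gha$ with terminus $v$. Injectivity is immediate, since $\sim$ preserves the colouring: if $\eta$ identifies $(\tilde v s^{-1},\tilde v)$ with $(\tilde v t^{-1},\tilde v)$, then these edges have equal colour, so $s=t$.

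For surjectivity, take $f\in\overrightarrow E(\Gha)$ with $\tau(f)=v$ and pick any $e_0\in\eta^{-1}(f)$ (possible as $\overrightarrow E(\Gha)=\overrightarrow E(T_P)/\sim$). Put $s:=c(e_0)$, so $e_0=(us^{-1},u)$ with $u:=\tau(e_0)$; then $u\in\eta^{-1}(v)$, hence $u\sim\tilde v$, say $\tilde v,u\in\tilde V_i$ and $u=\tilde v r$ with $r\in R_i$. It suffices to check that $(\tilde v s^{-1},\tilde v)\sim e_0$, as then $f=\eta(e_0)=\eta\bigl((\tilde v s^{-1},\tilde v)\bigr)$ lies in the image of our set. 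The two edges carry the same colour, their termini $\tilde v$ and $u$ are $\sim$-equivalent by assumption, and for their initial vertices we must show $\tilde v s^{-1}\sim us^{-1}=\tilde v r s^{-1}$; both elements lie in $\tilde V_{i+\vert s\vert_{\sS_2}}$, and $(\tilde v s^{-1})^{-1}(\tilde v r s^{-1})=s r s^{-1}$, so this reduces to the inclusion
\[
s\,R_i\,s^{-1}\subseteq R_{\,i+\vert s\vert_{\sS_2}}\qquad\mbox{for all }s\in\sS\cup\sS^{-1}.
\]

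Establishing this inclusion is the only real content, and the step I expect to be the main obstacle; it is proved by unwinding the definitions of $R_0,R_1$ as normal closures in $K$, using that every $s\in\sS\cup\sS^{-1}$ normalises $K$. It suffices to treat $s\in\sS$, the case $s\in\sS^{-1}$ following from $s^2\in K$ and normality of the $R_j$ in $K$. If $\vert s\vert_{\sS_2}=0$, i.e.\ $s\in\sS_1\subseteq K$, the grading is unchanged and $s R_i s^{-1}=R_i$ because $R_i$ is normal in $K$. If $\vert s\vert_{\sS_2}=1$, i.e.\ $s\in\sS_2$, the grading flips, and since conjugation by $s$ carries the normal closure in $K$ of a set $X$ to the normal closure in $K$ of $sXs^{-1}$, it is enough to see that $s$ conjugates each of the two families of generators of $R_i$ into $R_{1-i}$: an element $r\in\sR_i$ maps to $srs^{-1}$, which is one of the cross-conjugate generators $\{s'rs'^{-1}:s'\in\sS_2,\ r\in\sR_i\}$ of $R_{1-i}$; and a cross-conjugate generator $s'rs'^{-1}$ of $R_i$ (with $s'\in\sS_2$, $r\in\sR_{1-i}$) maps to $(ss')r(ss')^{-1}$, which lies in $R_{1-i}$ since $ss'\in K$, $r\in\sR_{1-i}\subseteq R_{1-i}$, and $R_{1-i}$ is normal in $K$. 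This is exactly the role of the cross-conjugate terms in the definition of the $R_i$ and of the hypothesis $\sR_i\subseteq K$. With the displayed inclusion in hand, the bijection above yields $\vert\tau^{-1}(v)\vert=\vert\sS\cup\sS^{-1}\vert$ for every $v\in V(\Gha)$, so $\Gha$ is $\vert\sS\cup\sS^{-1}\vert$-regular.
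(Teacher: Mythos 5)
Your proof is correct and follows essentially the same route as the paper's: both reduce the regularity of $T_P/\sim$ to the conjugation inclusion $sR_is^{-1}\subseteq R_{i+\vert s\vert_{\sS_2}}$, which guarantees that $\sim$ identifies edges consistently around each vertex, so that $\eta$ is a local bijection on incident edges. The only differences are cosmetic: you treat the $\sS_1$- and $\sS_2$-coloured edges uniformly (the paper delegates the $\sS_1$ part to Proposition~\ref{Cay G0}), and you actually verify the conjugation inclusion from the definition of the $R_i$ as normal closures, a step the paper merely asserts.
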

\begin{proof}
	By \Prr{Cay G0}, the subgraph with edges coloured by $\sS_1 \cup \sS_1^{-1}$ is $2\vert \sS_1 \vert$-regular. It therefore suffices to prove that every vertex in $\Gha$ has a unique outgoing edge coloured $s$ for every $s \in \sS_2 \cup \sS_2^{-1}$. Existence is easy by the definition of $T_P$. To prove uniqueness, suppose in $T_P$ we have two edges $(v_0,u_0), (v_1,u_1) \in \overrightarrow{E}(T_P)$ where $c(v_0,u_0) = s = c(v_1,u_1)$ and $v_0 \sim v_1$. So by definition $u_i = v_i s$ and $v_0^{-1}v_1 \in R_i$ for $i \in \bZ/2\bZ$. Note that 
	\[
	u_0^{-1}u_1 = s^{-1} v_0^{-1} v_1 s =  s^{-1} (v_0^{-1}v_1) s \in s^{-1} R_i s \subset R_{i+1},
	\] 
	which means that $u_0 \sim u_1$ and hence $(v_0,u_0) \sim (v_1,u_1)$ proving our uniqueness statement.
\end{proof}

\begin{cor} \label{part}
	For a \bpp\ $P = \Spx$ the universal cover of $\Gha := \Sp(P)$ is $T_P$. Moreover,  every edge with a colour in $\sS_1$ connects two vertices in $V_i$ for some $i \in \bZ/2\bZ$,  and every edge with a colour in $\sS_2$ connects a vertex in $V_{i}$ to a vertex in $V_{i+1}$. 
\end{cor}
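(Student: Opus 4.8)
The plan is to treat the two assertions of the corollary separately. The ``moreover'' part --- which colours keep an edge inside a class $V_i$ and which let it cross between $V_0$ and $V_1$ --- is essentially built into the construction, so I would settle it first. The substantive claim is that $T_P$ is the universal cover of $\Gha$; the plan for that is to show that the projection $\eta\colon T_P\to\Gha$ is a covering map and then to invoke the uniqueness of the universal cover, using that $T_P$, being a tree, is simply connected.

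For the bipartition statement I would use the parity homomorphism $|\cdot|_{\sS_2}$ defining $K$: for an edge $(w,ws)$ of $T_P$ we have $|ws|_{\sS_2}=|w|_{\sS_2}+|s|_{\sS_2}$, which is $|w|_{\sS_2}$ when $s\in\sS_1\cup\sS_1^{-1}$ and $|w|_{\sS_2}+1$ when $s\in\sS_2\cup\sS_2^{-1}$. So in $T_P$ an edge coloured in $\sS_1$ has both endpoints in the same coset $\tilde{V}_i$ of $K$, and an edge coloured in $\sS_2$ joins $\tilde{V}_i$ to $\tilde{V}_{i+1}$. Since $\sim$ respects the bipartition $\{\tilde{V}_0,\tilde{V}_1\}$ (because $\sR_i\subseteq K$), since $V_i=\tilde{V}_i/\sim$, and since $c=c'\circ\eta$, this dichotomy transfers verbatim to $\Gha$.

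For the main claim I would check directly that $\eta$ is a covering map, i.e.\ that for every $v\in V(T_P)$ it restricts to a bijection from the darts issuing from $v$ onto the darts issuing from $\eta(v)$. Injectivity is immediate: distinct darts at $v$ carry distinct colours and $\eta$ preserves colours. For surjectivity I would take a dart $f$ at $\eta(v)$, lift it to a dart $e=(v',v's)$ of $T_P$ via $\overrightarrow{E}(\Gha)=\overrightarrow{E}(T_P)/\sim$, note that $f$ issuing from $\eta(v)$ forces $v'\sim v$, and then show that the dart $e'=(v,vs)$ at $v$ satisfies $e'\sim e$, whence $\eta(e')=f$. The one thing to verify is $vs\sim v's$: writing $v,v'\in\tilde{V}_i$ with $v^{-1}v'\in R_i$, we have $(vs)^{-1}(v's)=s^{-1}(v^{-1}v')s$, which lies in $R_i=s^{-1}R_is$ if $s\in\sS_1\cup\sS_1^{-1}$ (then $s^{\pm1}\in K$ normalises $R_i$) and in $s^{-1}R_is\subseteq R_{i+1}$ if $s\in\sS_2\cup\sS_2^{-1}$, the latter being exactly the inclusion proved inside \Prr{reg}. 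Together with the $|\sS\cup\sS^{-1}|$-regularity of $\Gha$ from \Prr{reg}, injectivity and surjectivity give the bijection of darts, so $\eta$ is a covering map. Passing to topological realizations, $\eta$ becomes a covering map onto $\Gha$, which is path-connected (as the continuous image of the connected $T_P$), locally path-connected and semilocally simply connected; since $T_P$ is a tree and hence simply connected, the uniqueness in \Tr{cover-correspondence} identifies $T_P$ as the universal cover of $\Gha$.

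I do not expect a real obstacle here: once $\eta$ is seen to be a covering map the rest is soft topology, and the covering property is already all but contained in the proof of \Prr{reg}. The only step that genuinely needs attention is surjectivity on darts, namely that a lift of a prescribed dart can be chosen to issue from a prescribed vertex; this is where the conjugation relation $s^{-1}R_is\subseteq R_{i+1}$ is invoked, and that is the one computation I would make sure to spell out carefully.
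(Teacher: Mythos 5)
Your proposal is correct and follows essentially the same route as the paper: establish that $\eta\colon T_P\to\Gha$ is a covering map using the $|\sS\cup\sS^{-1}|$-regularity from \Prr{reg} and colour-preservation, conclude via simple connectivity of the tree, and derive the bipartition behaviour of edges from the parity homomorphism and the fact that $\sim$ respects the cosets of $K$. The only cosmetic differences are that you re-derive local surjectivity on darts by the conjugation computation $s^{-1}R_is\subseteq R_{i+1}$ (which the paper instead absorbs into the counting argument ``locally injective between sets of equal finite cardinality'') and that you use the parity argument for the $\sS_1$-edges where the paper cites \Prr{Cay G0}.
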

\begin{proof}
Recall that $\sim$  defines a map of graphs $\eta: T_P \rightarrow \Sp(P)$, by $\eta(x) = [x]$. As both $T_P$ and $T_P/\sim$ are $\vert \sS \cup \sS^{-1} \vert$-regular by \Prr{reg},  and $\eta$ is locally injective,  $\eta$ is a cover. As the fundamental group of a tree is trivial we deduce that $\eta$ is in fact the universal cover. 

By \Prr{Cay G0}, edges labelled $\sS_1$ connect vertices in $G_i$ to vertices in $G_i$, which are exactly the vertices in $V_i$. Moreover, in $T_P$ edges labelled $\sS_2$ connect vertices in $\tilde{V}_i$ to $\tilde{V}_{i+1}$. Therefore, edges labelled $\sS_2$ in $\Gha$ connect vertices in $\tilde{V}_i/\sim \ = V_i$ to vertices in $\tilde{V}_{i+1}/ \sim \ = V_{i+1}$.  
\end{proof}

\subsection{Topological definition}

We now give an alternative definition of $\Gha= \Sp(P)$ following the standard topological approach of defining a \Cg.

\medskip
Let $X$ be a set. Define the \defin{rose} $\Ro_{X}$ to be a graph with a single vertex $v$ and edge set $E(\Ro_{X}) = X$, where each $x \in X = E(\Ro_{X})$ signifies a loop at $v$. To be more precise, we let 
$X^{-1}$ denote an abstract set disjoint from $X$ and in bijection (denoted ${}^{-1}$) with $X$, and let $X\cup X^{-1}$ be the set of directed edges of $\Ro_{X}$. The terminus map $\tau$ of $\Ro_{X}$ maps all edges to $v$. We colour this rose by $c: \overrightarrow{E}(\Ro_{X}) \rightarrow {X} \cup {X}^{-1}$ by an arbitrary choice of orientation; in other words, $c$ is a bijection from $ \overrightarrow{E}(\Ro_{X})$ to $X\cup X^{-1}$ satisfying $c(e^{-1})= c(e)^{-1}$ for every $e\in X$.

For a presentation $P = \langle \sS \vert \sR \rangle$ of a group one often alternatively defines the Cayley graph in the following more topological way. 
We start by constructing the \defin{presentation complex} $\sC(P)$ as follows. The 1-skeleton of $\sC(P)$ is $\Ro_{\sS}$ with vertex $v$. 
For each relator $r  \in \sR$, we introduce a 2-cell $D_r$ and identify its boundary with the closed walk  of   $\Ro_{\sS}$ dictated by $r$ (see \Dr{def dict} below). 
This completes the definition of  $\sC(P)$. The Cayley graph $\Cay\langle \sS \vert \sR \rangle$ is the 1-skeleton of the universal cover of $\sC(P)$. 

We now generalise this construction to the context of our \bpp s. We remark that  it is not so easy to obtain the \invo\ Cayley graphs using this construction because $\Ro_{\sS}$ has even degree, so any cover will also have even degree. But treating $\sI$ appropriately  we will in fact be able to obtain graphs of odd degree. 

\begin{defn} \label{spectopdef}      
	Let $P = \langle \sS_1, \sU, \sI \vert \sR_0, \sR_1 \rangle$ be a \bpp. We construct the \defin{presentation complex} $\sC(P)$ of $P$ as follows. Start with two copies of $\Ro_{\sS_1}$, with vertices $v_0$ and $v_1$ respectively, and connect $v_0$ and $v_{1}$ with an edge for each element of $\sS_2 \cup \sS_2^{-1} \subset MF_P$. We will refer to this 1-complex $C(P)$ as the \defin{presentation graph} of $P$. We can extend the colouring of the two copies of $\Ro_{\sS_1}$ to a colouring $c: \overrightarrow{E}(C(P)) \rightarrow \sS \cup \sS^{-1}$ where $c(e)^{-1} = c(e^{-1})$. 
	
	To define the 2-cells of $\sC(P)$, 
for each relator $r = s_1s_2 \ldots s_n \in \sR_i$, we start a walk $p_r$ at $v_i$ and extend this walk inductively with the edge labelled $s_i, i=1, \ldots, n$. The path $p_r$ starts and ends at $v_i$ as $\sR_i \subset K$. Attach a 2-cell along each such closed walk $p_r$ to obtain the presentation complex $\sC(P)$ from $C(P)$. Finally, we define 
 the \defin{(topological) \bpcg} $\Sp'\langle\sS_1, \sU, \sI \vert \sR_0, \sR_1 \rangle$ to be the 1-skeleton of  the universal cover of $\sC(P)$.
\end{defn}

Our next result, \Tr{equiv-defn}, says that this gives rise to the same graph as in Definition \ref{specgrpdef}. To prove it, we will use the theory of covering spaces (\Sr{sec covers}). For this we need to turn our graphs into topological spaces, and we now recall the standard way to do so.

Given a graph $\Gha$ with vertex set $V$, and any orientation on its edges $O \subset \overrightarrow{E}(\Gha)$, we define a topological space as follows. Associate a point to each vertex, and a closed interval $I_e = [0,1]$ to each edge $e \in O$. Then define the quotient $I_e(0) \sim \tau(e^{-1})$ and $T_e(1) \sim \tau(e)$ to obtain the topological space
\[
\Gha = (V \cup \bigcup_{e \in O} I_e)/\sim.
\]
It is not hard to see that when $\Gha$ is  connected this topological space is path-connected, locally path-connected and semilocally simply-connected. Moreover, different choices of $O$ define homeomorphic topological spaces.

\medskip 
Next, we define a type of colouring that will be useful to establish that certain maps of graphs are covers.

\begin{defn} \label{def Cl}
	Let $\Gha$ be a graph with a colouring $c: \overrightarrow{E}(\Gha) \rightarrow X$. We say that $c$ is Cayley-like if
	\begin{enumerate}
		\item $\Gha$ is $\vert X \vert$-regular, 
		\item for all $e, e' \in \overrightarrow{E}(\Gha)$, if $c(e) = c(e')$ and $\tau(e) = \tau(e')$ then $e = e'$, and
		\item there is an involution $\,^{\textbf{-1}}: X \rightarrow X$ such that $c(e)^{\textbf{-1}} = c(e^{-1})$.
	\end{enumerate}
\end{defn}

Suppose we have two graphs $\Gha$ and $\Ghb$ with Cayley-like colourings $c_{\Gha}: \overrightarrow{E}(\Gha) \rightarrow X$ and $c_{\Ghb}: \overrightarrow{E}(\Ghb) \rightarrow X$. Then  any surjective map of graphs $\phi: \Gha \rightarrow \Ghb$ which respects these colourings, that is, $c_{\Gha} = c_{\Ghb} \circ \phi$, is a covering map of the associated topological spaces. Indeed, $\phi$ can't map any two edges that share an end vertex to the same edge, as this cannot respect the colourings.

Let $\sP_v(\Gha)$ be the set of walks in $\Gha$ starting at a vertex $v $, and define the group $MF_X$ by the presentation $\langle X \vert \{x x^{\textbf{-1}} : x\in X\} \rangle$. Then any Cayley-like colouring $c: \overrightarrow{E}(\Gha) \rightarrow X$ defines a map\\ $\sW_v: \sP_v(\Gha) \rightarrow  MF_X$ by $p = v e_1 v_1 \ldots e_n v_n \mapsto c(e_1) c(e_2) \ldots c(e_n)$. Note that there is a well defined inverse $\sW_v^{-1} : MF_X \rightarrow \sP_v(\Gha)$ as at every vertex $v' \in V(\Gha)$ there is a unique edge $e \in \overrightarrow{E}(\Gha)$ with colour $c(e)$ and $\tau(e^{-1}) = v'$. Moreover, $\sW_v^{-1}$ is a double sided inverse to $\sW_v$, so both these maps are bijections.  

\begin{defn} \label{def dict}
For any $g\in MF_X$, we say that  $\sW_v^{-1}(g) $ is the walk (in $\Gha$) \defin{dictated} by the word $g$ starting at $v$. 
\end{defn}
This is a natural definition since we can express $g$ as a word $s_1 \ldots s_n$ with $s_i\in X \cup X^{-1}$, and obtain $\sW_v^{-1}(g) $ by starting at $v$ and following the directed edges with colours $c(s_1)\ldots c(s_n)$; this is well-defined when $c$ is Cayley-like.

It is straightforward to check that if $p$ is homotopic to $p'$, then $\sW_v(p) = \sW_v(p')$. Thus by restricting  to the closed walks we can think of $\sW_v$ as a map from $\pi_1(\Gha,v)$ to $MF_X$, and so   the above remarks imply that 
\begin{prop} \label{Wisom}
$\sW_v$ is a group isomorphism from $\pi_1(\Gha,v)$ to a subgroup of  $MF_X$.
\end{prop}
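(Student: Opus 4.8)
The plan is to verify that the map $\sW_v$, already known from the surrounding discussion to be a well-defined map on walks and to be compatible with homotopy, descends to an injective group homomorphism on $\pi_1(\Gha,v)$, with image a genuine subgroup of $MF_X$. Three things need checking: that $\sW_v$ is well-defined on $\pi_1(\Gha,v)$, that it is a homomorphism, and that it is injective.

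First I would record that $\sW_v$ is well-defined as a map $\pi_1(\Gha,v)\to MF_X$: this is exactly the remark made just before the statement, that $p$ homotopic to $p'$ implies $\sW_v(p)=\sW_v(p')$, which follows because a homotopy of walks in a graph is generated by insertions/deletions of back-and-forth pairs $e\, e^{-1}$, and each such pair contributes $c(e)c(e^{-1}) = c(e)c(e)^{\mathbf{-1}} = 1$ in $MF_X$ by clause (3) of Cayley-likeness together with the defining relations $\{xx^{\mathbf{-1}}:x\in X\}$ of $MF_X$. Restricting $\sW_v$ to closed walks at $v$ therefore gives a map on $\pi_1(\Gha,v)$.

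Next, the homomorphism property: for closed walks $p,q$ at $v$, the concatenation $p\cdot q$ has edge-colour word equal to the colour word of $p$ followed by that of $q$, so $\sW_v(p\cdot q) = \sW_v(p)\sW_v(q)$ directly from the definition $p = v e_1 v_1 \ldots e_n v_n \mapsto c(e_1)\cdots c(e_n)$; the identity element (the trivial walk at $v$) maps to $1$. Hence $\sW_v$ is a group homomorphism on $\pi_1(\Gha,v)$. For injectivity, I would use the two-sided inverse $\sW_v^{-1}$ already constructed: if $g\in MF_X$ is in the image of a closed walk, then $\sW_v^{-1}(g)$ is the walk dictated by $g$, and since $\sW_v\circ\sW_v^{-1} = \mathrm{id}$ and $\sW_v^{-1}\circ\sW_v = \mathrm{id}$ on the level of walks, a closed walk $p$ with $\sW_v(p)=1$ satisfies $p = \sW_v^{-1}(1)$, the constant walk at $v$, which is nullhomotopic; so the kernel in $\pi_1$ is trivial. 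The image is then a subgroup of $MF_X$ as the image of a group homomorphism.

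The only genuinely delicate point is the first one — that the colour word is a homotopy invariant, i.e.\ that reduction in $MF_X$ correctly mirrors the elementary homotopies of walks in $\Gha$ — and this is precisely where clause (2) (distinct edges at a vertex get distinct colours) and the Cayley-likeness hypotheses are used: clause (2) guarantees that $\sW_v^{-1}$ is genuinely well-defined (unique edge of each colour out of each vertex), so that $\sW_v^{-1}\circ\sW_v$ collapses exactly the $e\,e^{-1}$ cancellations and nothing more. Everything else is bookkeeping on words. I expect no real obstacle; the main care is to phrase the homotopy-invariance argument cleanly rather than to overcome any conceptual difficulty.
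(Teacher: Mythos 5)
Your proposal is correct and follows essentially the same route as the paper, which states the proposition as an immediate consequence of the preceding remarks (homotopy invariance of $\sW_v$ together with the two-sided inverse $\sW_v^{-1}$); you are simply spelling those remarks out. The one imprecision is that $\sW_v^{-1}\circ\sW_v$ is the identity only up to backtrack reduction rather than on concrete walks, but since you only use it to conclude that a closed walk with trivial colour word in $MF_X$ is nullhomotopic, the argument goes through.
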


Suppose we have a covering map of graphs $\psi: \Ghb \rightarrow \Gha$  both of which have Cayley-like colourings $c_{\Ghb}: \overrightarrow{E}(\Ghb) \rightarrow X$ and $c_{\Gha}: \overrightarrow{E}(\Gha) \rightarrow X$ such that $c_{\Ghb} = c_{\Gha} \circ \psi$. For a path $p: [0,1] \rightarrow \Gha$ with $p(0), p(1) \in V(\Gha)$ and a lift $\tilde{p}: [0,1] \rightarrow \Ghb$ of $p$ by $\psi$, it is straightforward to check that 
\labtequ{W cover}{$\sW_{p(0)}(p) = \sW_{\tilde{p}(0)}(\tilde{p})$}
where with a slight abuse, we interpreted $p$ as a walk in $\Gha$ in the obvious way. 

\begin{thm} \label{equiv-defn}
	For every \bpp\ $P = \langle \sS_1, \sU, \sI \vert \sR_0, \sR_1 \rangle$, the \bpcg s $\Gha = \Sp (P)$ and $\Ghb = \Sp\ ' (P)$ are isomorphic.
\end{thm}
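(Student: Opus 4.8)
The plan is to show that both $\Gha = \Sp(P)$ and $\Ghb = \Sp'(P)$ are the universal cover of the same presentation complex, or more precisely that each carries a Cayley-like colouring by $\sS \cup \sS^{-1}$ and covers the presentation graph $C(P)$ (equivalently the complex $\sC(P)$) in a way compatible with colours, and then invoke uniqueness of the universal cover (\Tr{cover-correspondence}). Recall from \Cr{part} that $T_P$ is the universal cover of $\Gha$, and from Definition~\ref{spectopdef} that $\Ghb$ is by construction the $1$-skeleton of the universal cover of $\sC(P)$; so the task is really to identify the universal cover of $\sC(P)$ with $T_P/\!\sim$.

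First I would record that $C(P)$ has a Cayley-like colouring $c_{C}: \overrightarrow{E}(C(P)) \to \sS \cup \sS^{-1}$: the two roses $\Ro_{\sS_1}$ contribute the loops coloured by $\sS_1 \cup \sS_1^{-1}$, and the edges between $v_0$ and $v_1$ contribute the colours in $\sS_2 \cup \sS_2^{-1}$, with the obvious involution ($s \mapsto s^{-1}$, fixing exactly the elements of $\sI$). The colouring $c'$ on $\Gha$ from Definition~\ref{specgrpdef} is Cayley-like by \Prr{reg} together with the uniqueness argument inside its proof (condition~2) and the definition of $c$ (condition~3). The natural map $\Gha \to C(P)$ sending $V_i \to v_i$ and each edge to the edge of $C(P)$ of the same colour respects colours, hence by the remark following \Dr{def Cl} it is a covering map of the associated spaces; and by \Prr{Wisom} applied to $\Gha$ together with (\ref{W cover}), $\Gha$ is simply connected as a topological space precisely when its $\sW$-image in $MF_P$ contains no nontrivial element, i.e.\ when every closed walk in $\Gha$ reads a word that already lies in the subgroup $\langle\langle \sR \rangle\rangle$-type normal closures $R_i$ — which is exactly what the definition of $\sim$ via $R_0, R_1$ forces. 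Hence $\Gha$ covers $C(P)$, and after checking that the $2$-cells of $\sC(P)$ lift, it covers $\sC(P)$; since $\Gha$ is simply connected it is \emph{the} universal cover of $\sC(P)$, so $\Ghb \cong \Gha$.

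The cleaner route, which I would actually write up, is to go through $T_P$. By \Cr{part}, $\eta: T_P \to \Gha$ is the universal cover, and $T_P$ visibly covers $C(P)$ respecting the Cayley-like colourings (a regular tree covering a rose-like graph). Thus $\pi_1(\sC(P))$ acts, and by \Tr{cover-correspondence} the covers of $\sC(P)$ correspond to subgroups of $\pi_1(\sC(P)) = \pi_1(C(P))/\langle\langle \text{relators}\rangle\rangle$; the subgroup corresponding to $\Gha$ is $\sW_{v_i}(\pi_1(\Gha, [1]))$ pushed into $\pi_1(C(P))$, which by the definition of $R_0, R_1$ is exactly the normal closure of the relators in $\sR_0 \cup \sR_1$ together with the conjugates $srs^{-1}$, $s \in \sS_2$ — i.e.\ the trivial subgroup of $\pi_1(\sC(P))$. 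Hence $\Gha$ is the universal cover of $\sC(P)$, whose $1$-skeleton is by definition $\Ghb$; and the isomorphism $\Gha \cong \Ghb$ preserves the colourings because both are pulled back from $c_C$ on $C(P)$.

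The main obstacle is bookkeeping around the asymmetry between $\sR_0$ and $\sR_1$: a relator in $\sR_1$ is read starting at $v_1$ (equivalently, starting at a vertex of $\tilde V_1 = \sS_2 K$), so to compare closed walks based in one copy of $K$ one must conjugate by an element of $\sS_2$, and this is precisely why $R_0$ is defined to include $\{srs^{-1} : r \in \sR_1, s \in \sS_2\}$ and vice versa. I would need to verify carefully that the normal closure taken inside $K$ (rather than inside $MF_P$) matches the image under $\sW$ of $\pi_1$ of the relevant component/complex, using that $K$ has index $2$ and that $s^{-1} R_i s \subseteq R_{i+1}$ (already shown in the proof of \Prr{reg}). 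The $\sI$-involutions require a small extra check that collapsing each $\sI$-digon to a single edge is compatible with the covering, which is handled by working with $MF_P$ and the Cayley-like colouring throughout rather than with $F_{\sS}$.
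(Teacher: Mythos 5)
Your plan is correct and follows essentially the same route as the paper's proof: both arguments realise $\Gha$ and $\Ghb$ as colour-respecting covers of the presentation graph $C(P)$, use $\sW_{v_i}$ to identify $\pi_1(C(P),v_i)$ with $K$, compute that the subgroup corresponding to $\Gha$ is $R_i$ (via lifts to $T_P$ and the definition of $\sim$), identify $R_i$ with $\ker\bigl(i_\ast\colon \pi_1(C(P))\to\pi_1(\sC(P))\bigr)$, and conclude by the classification of covering spaces (\Tr{cover-correspondence}); the technical heart you defer ("verify carefully that the normal closure taken inside $K$ matches the image under $\sW$ of $\pi_1$") is exactly the two-inclusion computation $\sW_{v_i}(\nu_\ast(\pi_1(\Gha)))=R_i$ that occupies the middle of the paper's proof. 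One imprecision to fix in the write-up: the graph $\Gha$ is \emph{not} simply connected (its fundamental group is isomorphic to $R_i$, which is nontrivial in general), so the phrase ``since $\Gha$ is simply connected it is the universal cover of $\sC(P)$'' should be replaced by the statement that the $2$-complex obtained from $\Gha$ by attaching the lifted $2$-cells is simply connected --- which, as your cleaner route correctly observes, follows from the triviality of the image of $\pi_1(\Gha)$ in $\pi_1(\sC(P))$ together with the injectivity of $\pi_1$ under covering maps.
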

\begin{proof}
	Our presentation graph $C=C(P)$ is $\vert \sS \cup \sS^{-1} \vert$-regular by definition. Therefore, the universal cover of $C$ is the $\vert \sS \cup \sS^{-1} \vert$-regular tree $T$, and we can let $\theta: T \rightarrow C$ be the corresponding covering map. Let $c_C: \overrightarrow{E}(C) \rightarrow \sS \cup \sS^{-1}$ be the colouring of $C$ as above. This lifts to a colouring $c_T : \overrightarrow{E}(T) \rightarrow \sS \cup \sS^{-1}$  of $T$, by letting $c_T(e) := c_C(\theta(e))$. This colouring allows us to identify $T$ with $T_P$. 
	
	Let $p \in \pi_1(C,v_i)$. As $c_C$ is a Cayley-like colouring of $C$, we can consider $\sW_{v_i}(p) \in MF_P$ by Definition~\ref{def Cl} and the discussion thereafter. Any closed walk representing $p$ must use an even number of edges coloured $\sS_2 \cup \sS_2^{-1}$ by the definition of $C$, so $\sW_{v_i}(p) \in K \subset MF_P$. Moreover, each $k \in K$ gives rise to a closed walk $\sW_{v_i}^{-1}(k)$ representing some element of  $\pi_1(C,v_i)$. Thus by Proposition~\ref{Wisom}, 
	\labtequ{pi C onto K}{$\sW_{v_i}$ is an isomorphism from $\pi_1(C,v_i)$ onto $K$. }
	
	Recall that we can identify $T$ with $T_P$. If in doing so we identify the identity $1_{MF_P} \in V(T_P)$ of $MF_P$ with some vertex in $\theta^{-1}(v_0)$ (which we easily can) then \eqref{pi C onto K} implies 
	\labtequ{thet Vi}{$\theta(\tilde{V}_i) = v_i$,}
	because $\tilde{V}_0 = K$ and $\tilde{V}_1 = \sS_2 K$.
	\showFigTikz{
		\begin{displaymath}
		\xymatrix{
			& T_P \ar[dl]^{\eta} \ar[dr]_{\Phi} \ar[drr]^{\widehat{\Phi}} \ar[dd]^{\theta}\\
			\Gha \ar[dr]^{\nu} & & \Ghb \ar[dl]^{\epsilon} \ar@{^{(}->}[r]_{i} & \widehat{\Ghb} \ar[dl]^{\widehat{\epsilon}}\\
			& C \ar@{^{(}->}[r]^i & \sC
		} \hspace{0.5 in}
		\xymatrix{
			& 1 \ar@{^{(}->}[dl]^{\eta_{\ast}} \ar@{^{(}->}[dr]_{\Phi_{\ast}} \ar@{^{(}->}[drr]^{\widehat{\Phi}_{\ast}} \ar@{^{(}->}[dd]^{\theta_{\ast}}\\
			\pi_1(\Gha) \ar@{^{(}->}[dr]^{\nu_{\ast}} & & \pi_1(\Ghb) \ar@{^{(}->}[dl]^{\epsilon_{\ast}} \ar@{->>}[r]_{i_{\ast}} & 1 \ar@{^{(}->}[dl]^{\widehat{\epsilon}_{\ast}}\\
			& K \ar@{->>}[r]^{i_{\ast}} & K/R_i
		}
		\end{displaymath}
	}{maps-equiv-defn}{Maps used in Proposition \ref{equiv-defn}}
	
	Let $\eta : T_P \rightarrow \Gha$ be the covering map found in Corollary \ref{part}. Let $c_{\Gha} : \overrightarrow{E}(\Gha) \rightarrow \sS \cup \sS^{-1}$ be the colouring of $\Gha$ as in its definition. Now define a map $\nu : \Gha \rightarrow C$ by letting $\nu(v) = v_i$ whenever $v \in V_i = \eta(\tilde{V}_i)$. If $c_{\Gha}(e) = s$ for some $e \in \overrightarrow{E}(\Gha)$ then $\nu$ maps $e$ to the unique edge $e' \in \overrightarrow{E}(C)$ with $c_{C}(e') = s$ and $\tau(e') = \nu(\tau(e))$. Since for every $v \in \tilde{V}_i$ we have $\eta(v) \in V_i$, we have $\nu(\eta(v)) = v_i$ and hence $\theta = \nu \circ \eta$ by \eqref{thet Vi}.
	
	Let $\widehat{\epsilon} : \widehat{\Ghb} \rightarrow \sC$ be the universal cover of $\sC:= \sC(P)$. We know that $\Ghb$ and $C$ are the 1-skeletons of $\widehat{\Ghb}$ and $\sC$ respectively, so we obtain the inclusion maps $i: \Ghb \rightarrow \widehat{\Ghb}$ and $i: C \rightarrow \sC$. Furthermore, by restricting $\widehat{\epsilon}$ to the 1-skeleton we obtain a covering map $\epsilon: \Ghb \rightarrow C$. As $\theta: T_P \rightarrow C$ is the universal cover of $C$, it can be lifted through $\epsilon: \Ghb \rightarrow C$ to a map $\Phi: T_P \rightarrow \Ghb$ so that $\epsilon \circ \Phi = \theta$ by the definition of a universal cover. This gives us a map $\widehat{\Phi}: T_P \rightarrow \widehat{\Ghb}$ defined by $\widehat{\Phi}:= i \circ \Phi$. Note that all these maps respect the colourings of the edges as $\theta$ and $\widehat{\epsilon}$ do.
	
	By Theorem \ref{cover-correspondence}, to show $\Gha \cong \Ghb$ it suffices to show that $\nu_{\ast}(\pi_1(\Gha)) = \epsilon_{\ast}(\pi_1(\Ghb))$, or equivalently $\sW_{v_i}(\nu_{\ast}(\pi_1(\Gha))) = \sW_{v_i}(\epsilon_{\ast}(\pi_1(\Ghb)))$ as $\sW_{v_i}$ is a bijection. To do so, we will prove that the latter groups are both equal to $R_i$, where $R_i$ is as defined after  Definition~\ref{spec sp pres def}. 
	
	To show that $\sW_{v_i}(\nu_{\ast}(\pi_1(\Gha))) = R_i$, let $p$ be a closed walk representing some element of  $\pi_1(\Gha,v)$ with $v \in V_i$. Choose a lift of $p$ to a walk $\tilde{p}: [0,1] \rightarrow T_P$ (so $\eta \circ \tilde{p} = p$). We know that $\eta(\tilde{p}(0)) = \eta(\tilde{p}(1)) = v$, so $\tilde{p}(0),\tilde{p}(1) \in \eta^{-1}(v)$ implying $\tilde{p}(0)^{-1}\tilde{p}(1) \in R_i$. So $\sW_{v_i}(\nu_{\ast}(p)) = \sW_{v_i}(\theta(\tilde{p})) \in R_i$, which proves that  $\sW_{v_i}(\nu_{\ast}(\pi_1(\Gha))) \subseteq R_i$
	
	We would like to use Proposition~\ref{Wisom} to deduce $\sW_{v_i}(\nu_{\ast}(\pi_1(\Gha))) = R_i$, and for this it now only remains to prove that the former is surjective onto $R_i$. To show this, pick any $r \in R_i$.  As $R_i \subset K \cong \sW_{v_i}(\pi_1(C,v_i))$ by \eqref{pi C onto K}, there is a representative $q$ of an element of $\pi_1(C,v_i)$ such that $\sW_{v_i}(q) = r$. Choose a lift $\tilde{q} : [0,1] \rightarrow T_P$ of $q$ through $\nu \circ \eta = \theta$, such that $\eta(\tilde{q}(0))=v$ (and so $\nu \circ \eta \circ \tilde{q} = \theta \circ \tilde{q} = q$). Then as $\sW_{v}(\tilde{q}) = \sW_{v_0}(q) = r \in R_i$ we have  $\tilde{q}(0)^{-1}\tilde{q}(1) \in R_i$, and so $\tilde{q}(0) \sim \tilde{q}(1)$, with $\sim$  as in the definition of $\Gha$ as a  quotient of $T_P$. This means that $\eta(\tilde{q}(1)) = \eta(\tilde{q}(0)) = v$, and so $\eta \circ \tilde{q}$ is a loop representing an element of $ \pi_1(\Gha,v)$. Since $\nu_{\ast}(\eta \circ \tilde{q}) = \theta \circ \tilde{q} = q$ represents an element of  $\nu_{\ast}(\pi_1(\Gha))$ we deduce that $r = \sW_{v_i}(q) \in \sW_{v_i}(\nu_{\ast}(\pi_1(\Gha,v)))$, proving that $ \sW_{v_i}(\nu_{\ast}(\pi_1(\Gha,v)))$ surjects onto $R_i$ as desired. 

	Next, we prove $\sW_{v}(\epsilon_{\ast}(\pi_1(\Ghb,v))) \subseteq R_i$ for every $v \in V(\Ghb)$ with $\epsilon(v) = v_i$. It is well-known  \cite[Proposition 1.26]{HA02} that the inclusion of the one skeleton into a 2-simplex induces a surjection on the level of fundamental groups, and the kernel is exactly the normal closure of the words bounding the 2-cells. Thus $i_{\ast} : \pi_1(C,v_i) \rightarrow \pi_1(\sC,v_i)$ is a surjection. Combining these remarks with \eqref{pi C onto K}, it follows that $i_{\ast} \circ \sW_{v_i}^{-1}: K \rightarrow \pi_1(\sC,v_i)$ is a surjection, with kernel $R_i$, since $R_i$ is the normal closure in $K$ of the words onto which $\sW_{v_i}^{-1}$ maps the closed walks bounding 2-cells of $\sC$ by the definition of $\sC$. Thus $\pi_1(\sC,v_i) = K/R_i = G_i$. Now pick $v \in V(\Ghb)$ with $\epsilon(v) = v_i$. As $i \circ \epsilon = \widehat{\epsilon} \circ i$ and $\pi_1(\widehat{\Ghb}) = 1$, we have $(i_{\ast} \circ \epsilon_{\ast})(\pi_1(\Ghb,v)) = (\widehat{\epsilon}_{\ast} \circ i_{\ast}) (\pi_1(\Ghb,v)) = 1$, and so $\sW_{v}(\epsilon_{\ast}(\pi_1(\Ghb,v))) \leq ker(i_{\ast}) = R_i$ as desired. 

	Finally, we claim that $R_i \subset \sW_{v}(\epsilon_{\ast}(\pi_1(\Ghb,v)))$  for every $v \in V(\Ghb)$ with $\epsilon(v) = v_i$. For this, pick $r \in R_i$, and note that as $R_i \subset K$ and $K \cong \sW_{v_i}(\pi_1(C,v_i))$ by \eqref{pi C onto K}, there is a representative $t$ of an element of $\pi_1(C,v_i)$ such that $\sW_{v_i}(t) = r$. We can write $\sW_{v_i}(t) = r = \prod_{j=1}^n w_jr_jw_j^{-1} \in MF_P$ for $w_j \in K$ and $r_j \in \sR_i \cup s\sR_{i+1}s^{-1}$ with $s \in \sS_2 \cup \sS_2^{-1}$ by the definition of $R_i$. Choose a lift $t' : [0,1] \rightarrow \Ghb $ of $t$ through $\epsilon$ so that $t'(0) = v$. By \eqref{W cover} we have $\sW_{v_i}(t) = \sW_{v}(t')$. Note that $\sW^{-1}_{v}(w_jr_jw_j^{-1})$  is a loop of $\Ghb$ as $\sW^{-1}(r_j)$ is contractable in $\widehat{\Ghb}$, and so it represents some element of $\pi_1(\Ghb,v)$. Applying this to each factor of our above expression $r = \prod_{j=1}^n w_jr_jw_j^{-1}$ implies that $t'$ represents some element of  $\pi_1(\Ghb,v)$. Thus $\sW_{v_i}(\epsilon_{\ast}(t')) = \sW_{v_i}(t) = r$, which means that $R_i \subset \sW_{v}(\epsilon_{\ast}(\pi_1(\Ghb,v)))$ as claimed.

	
	To summarize, we have proved that $\sW_{v_i}(\nu_{\ast}(\pi_1(\Gha))) = R_i = \sW_{v_i}(\epsilon_{\ast}(\pi_1(\Ghb)))$, implying that $\Gha \cong \Ghb$. Moreover, it is straightforward to check that as all the maps above respect the edge colourings, so does this isomorphisms of graphs.
\end{proof}

From now on we just use the notation $\Sp(P)$ for the \bpcg\ obtained in either Definition~\ref{specgrpdef} or~\ref{spectopdef}.

As a corollary of the above proof, we deduce that the covers $\nu,\epsilon$ are equal, and so 
\labtequ{eps vV}{$V_i = \nu^{-1}(v_i) = \epsilon^{-1}(v_i)$}
 and similarly $V_i = \eta(\tilde{V}_i) = \Phi(\tilde{V}_i)$, so $V_i$ is well defined for either the topological or graph definition, as in the notation of Figure \ref{maps-equiv-defn}. From now on we will only use $\epsilon$ to denote this covering map. \mymargin{remove or reinforce}

The following corollary gathers some further facts that we obtained in the proof of Theorem~\ref{equiv-defn} for future reference.

\begin{cor} \label{pi-one}
	Let $P = \langle \sS_1, \sU, \sI \vert \sR_0, \sR_1 \rangle$ be a \bpp\ with \pcg\ $\Gha := \Sp(P)$. We have
	\begin{enumerate}
		\item \label{pi_one I} $\pi_1(\sC(P),v_i)$ is isomorphic to  $G_i$;
		\item $\sW_{v_i}$ is an isomorphism from $\pi_1(C(P), v_i)$ onto $K$;
		\item $\sW_{v_i}$ is an isomorphism from  $\pi_1(\Gha,v)$ onto $R_i$ for every $v \in V_{i}$; and
		\item the sequence $0 \rightarrow \pi_1(\Gha,v) \xrightarrow{\epsilon_{\ast}} \pi_1(C(P), v_i) \xrightarrow{i_{\ast}} \pi_1(\sC(P), v_i) \rightarrow 0$ is exact, where $\epsilon : \Gha \rightarrow C(P)$ is the cover in Definition~\ref{spectopdef}, and $i: C(P) \rightarrow \sC(P)$ the inclusion.  
	\end{enumerate}
\end{cor}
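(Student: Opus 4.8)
The plan is to observe that all four items were, in essence, already established inside the proof of \Tr{equiv-defn}; the work is just to point to the right lines and assemble them, while keeping track of the various colour-preserving covers involved. I would begin with item (2), since the others lean on it: this is precisely equation \eqref{pi C onto K}. It follows from \Prr{Wisom} (which already gives that $\sW_{v_i}$ is an isomorphism from $\pi_1(C(P),v_i)$ onto \emph{some} subgroup of $MF_P$) together with the two remarks made in that proof, namely that a closed walk at $v_i$ in $C(P)$ must traverse an even number of $\sS_2\cup\sS_2^{-1}$-coloured edges and so maps into $K$, and that conversely every $k\in K$ is $\sW_{v_i}(q)$ for the closed walk $q=\sW_{v_i}^{-1}(k)$.

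Next I would handle item (1). Recall from the proof of \Tr{equiv-defn} that, by \cite[Proposition 1.26]{HA02}, the inclusion $i\colon C(P)\hookrightarrow\sC(P)$ induces a surjection $i_*\colon\pi_1(C(P),v_i)\to\pi_1(\sC(P),v_i)$ whose kernel is the normal closure of the boundary words of the $2$-cells of $\sC(P)$. Transporting this kernel along the isomorphism $\sW_{v_i}$ of item (2), it becomes exactly $R_i$ by the way the $2$-cells of $\sC(P)$ were attached in \Dr{spectopdef}; hence $\pi_1(\sC(P),v_i)\cong K/R_i=G_i$.

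For item (3), the proof of \Tr{equiv-defn} shows $\sW_{v_i}(\nu_*(\pi_1(\Gha)))=R_i$, and (after the identification $\Gha\cong\Ghb$ proved there) $\nu=\epsilon$. Since $\epsilon$ is a covering map, $\epsilon_*$ is injective by \cite[Proposition 1.31]{HA02}, and $\sW_{v_i}$ is injective by item (2), so $\sW_{v_i}\circ\epsilon_*$ is an isomorphism from $\pi_1(\Gha,v)$ onto $R_i$; by \eqref{W cover} this composite coincides with the word map $\sW_v$ on closed walks at $v$, which is exactly the asserted isomorphism. Finally item (4): injectivity of $\epsilon_*$ is again \cite[Proposition 1.31]{HA02}, surjectivity of $i_*$ is \cite[Proposition 1.26]{HA02}, and exactness in the middle follows because, transported along $\sW_{v_i}$, both the kernel of $i_*$ (by item (1)) and the image of $\epsilon_*$ (by item (3)) equal $R_i\le K$.

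I do not expect a substantial obstacle here — no new idea is needed beyond the proof of \Tr{equiv-defn}. The only real care required is bookkeeping: keeping the several covering maps ($\eta,\nu,\epsilon,\theta,\Phi$, the inclusions $i$, and the identification $\Gha\cong\Ghb$) straight, and invoking \eqref{W cover} correctly each time one passes between the incarnations of the word map $\sW$ attached to two graphs joined by a colour-preserving cover.
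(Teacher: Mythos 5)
Your proposal is correct and matches the paper's intent exactly: the paper offers no separate proof of Corollary~\ref{pi-one} but states that these facts were obtained in the proof of Theorem~\ref{equiv-defn}, and your assembly — item (2) from \eqref{pi C onto K}, item (1) from the $i_*$-surjection with kernel $R_i$, item (3) from $\sW_{v_i}(\nu_*(\pi_1(\Gha)))=R_i$ together with $\nu=\epsilon$ and \eqref{W cover}, and item (4) from combining these — is precisely the bookkeeping the paper has in mind.
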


\bigskip
The \defin{generalised Petersen graph} is denoted by $P(n,k)$ and defined as follows. Let 
\begin{align*}
V(P(n,k)) &:= \{x_i, y_i \ \vert \ i \in \bZ /n\bZ \}, \mbox{ and} \\
E(P(n,k)) &:= \{ (x_i, x_{i+1}), (x_i, y_i), (y_i, y_{i+k}) \ \vert \ i \in \bZ /n \bZ \}.
\end{align*}
The classical example is the Petersen graph, $P(5,2)$, the smallest non-Cayley vertex transitive graph. The following statement, proved in the second author's PhD thesis, says that we can obtain every $P(n,k)$ as a \bpcg.

\begin{thm} \label{Petersen-Split-Presentation}
	The generalised Petersen graph $P(n,k)$ is isomorphic to $\Sp\langle \{a\}, \emptyset, \{b\} \vert \{a^n,aba^kb\},\{a^n\} \rangle$. 
\end{thm}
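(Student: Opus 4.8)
The plan is to verify that the \bpcg\ $\Gha := \Sp\langle \{a\}, \emptyset, \{b\} \mid \{a^n, aba^kb\}, \{a^n\}\rangle$ has exactly the vertices and edges of $P(n,k)$, by first computing the two vertex groups $G_0, G_1$ and then exhibiting an explicit colour-preserving graph isomorphism. First I would set up the algebraic data exactly as in Example~\ref{ex Pet 1}: here $MF_P = \langle a, b \mid b^2\rangle$, so $T_P$ is the $3$-regular tree, $K = \ker(|\cdot|_{\sS_2})$ is freely generated (as a subgroup of $MF_P$) by $a$ and $bab$, and $\sS_2 = \{b\}$. Then $R_0$ is the normal closure in $K$ of $\{a^n, aba^kb, ba^nb\}$ and $R_1$ the normal closure in $K$ of $\{a^n, ba^nb, baba^k\}$. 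Rewriting the relators in the generators $a$ and $c := bab$ of $K$: in $G_0 = K/R_0$ we have $a^n = 1$, $a c^k = 1$ (from $aba^kb = a(bab)^k = ac^k$), and $c^n = 1$ (from $ba^nb = (bab)^n = c^n$); hence $a = c^{-k}$ and $c^{-nk} = c^n = 1$. In $G_1 = K/R_1$ we symmetrically get $c^n = 1$, $c a^k = 1$, $a^n = 1$, so $c = a^{-k}$. Thus $G_0 = \langle c \mid c^n\rangle \cong \bZ/n\bZ$ and $G_1 = \langle a \mid a^n\rangle \cong \bZ/n\bZ$ (using $\gcd$ arguments only insofar as one checks the group is not made trivial — in fact $c^n=1$ already forces order dividing $n$, and no further collapse occurs since $\langle c \mid c^n \rangle$ surjects onto $\bZ/n\bZ$ via the obvious map to $P(n,k)$'s symmetries, giving equality).

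Next I would identify the vertex sets. By the discussion after Definition~\ref{specgrpdef}, $V(\Gha) = V_0 \sqcup V_1$ with $V_i$ in canonical bijection with $G_i \cong \bZ/n\bZ$; write $V_0 = \{[c^i] : i \in \bZ/n\bZ\}$ and $V_1 = \{[bc^i] : i\in\bZ/n\bZ\}$ (representatives in $\tilde V_1 = bK$). I would then define the candidate isomorphism $\varphi : \Gha \to P(n,k)$ on vertices by sending, say, $[c^i] \mapsto y_{-i}$ (the inner cycle) and $[bc^i]\mapsto$ an appropriate $x_j$ on the outer cycle — the precise indexing is dictated by how the generator $a$ acts, since by Proposition~\ref{Cay G0} the $a$-coloured edges within $V_0$ form $\Cay(G_0, \{a\}) = \Cay(\bZ/n\bZ, \{-k\})$ (a single $n$-cycle since the claim that $G_0$ has order exactly $n$ gives this, with $\gcd(n,k)$ dividing things; but $P(n,k)$ needs $\gcd(n,k)$ considered — actually $P(n,k)$ is usually taken with $\gcd$ arbitrary and the inner part is a union of cycles, which matches). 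Within $V_1$ the $a$-coloured edges form $\Cay(\bZ/n\bZ, \{1\})$, the outer $n$-cycle, and the $b$-coloured edges form a perfect matching between $V_0$ and $V_1$, matching $[c^i]$ to $[bc^i]$, i.e.\ $y$-vertices to $x$-vertices. I would then check that with the indexing chosen, $b$-edges realise exactly the spokes $(x_i, y_i)$, the $a$-edges in $V_1$ realise $(x_i, x_{i+1})$, and the $a$-edges in $V_0$ realise $(y_i, y_{i+k})$, using $a = c^{-k}$ in $G_0$.

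The cleanest rigorous route, rather than hand-building $\varphi$, is the converse: define a colouring of $P(n,k)$ with $a$ (red) directed around the outer cycle $x_i \to x_{i+1}$ and around the inner cycle $y_i \to y_{i+k}$, and $b$ (blue) on the spokes $x_i \leftrightarrow y_i$ (an involution), then check this colouring is Cayley-like in the sense of Definition~\ref{def Cl}, and that at every outer vertex the word $a^n$ and $aba^kb$ bound closed walks while at every inner vertex $a^n$ does; finally invoke the topological characterisation (Definition~\ref{spectopdef} and Theorem~\ref{equiv-defn}): the universal cover of the presentation complex $\sC(P)$ must be the universal cover of $P(n,k)$ with this colouring (both have the $3$-regular tree as underlying cover since the colourings are Cayley-like), and the deck-transformation/$\pi_1$ computation via Corollary~\ref{pi-one} shows the two covering subgroups of $\pi_1$ coincide with $R_0$ and $R_1$. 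Concretely: $\pi_1(P(n,k), x_0)$ maps under $\sW$ into $MF_P$, and one checks its image is exactly the normal closure relevant to the presentation, i.e.\ $R_0$ at an $x$-vertex and $R_1$ at a $y$-vertex; then Theorem~\ref{cover-correspondence} gives the isomorphism, and colour-preservation is automatic.

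The main obstacle I anticipate is not the topology but the bookkeeping in the two directions of the colouring: one must be careful that reading $aba^kb$ from an inner vertex does \emph{not} close up (so that the relator genuinely only applies to $V_0$), and conversely that $baba^k = b(aba^kb)b$ type conjugates are what close up at inner vertices — i.e.\ that the asymmetric treatment of $\sR_0$ versus $\sR_1$ in the definitions of $R_0, R_1$ (with the conjugation by $\sS_2$) is matched by the genuine asymmetry between $x$- and $y$-vertices in $P(n,k)$ when $k \ne 1$. A secondary subtlety is the case $\gcd(n,k) > 1$, where the inner "cycle" of $P(n,k)$ is a disjoint union of cycles and one should confirm $\Cay(G_0, \{a\})$ with $a = c^{-k}$ reproduces exactly that; since $G_0 \cong \bZ/n\bZ$ and $a$ has order $n/\gcd(n,k)$ there, this works out, but it is worth stating explicitly. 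Once the indexing is pinned down these are all routine verifications.
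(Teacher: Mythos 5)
The paper does not actually prove this statement: it is cited to the second author's PhD thesis, so there is no in-text argument to compare against. Judged on its own, your strategy is sound and uses exactly the machinery the paper sets up. The computation of the vertex groups is correct: $K$ is free on $a$ and $c:=bab$, the relators of $R_0$ rewrite as $a^n$, $ac^k$, $c^n$ and those of $R_1$ as $a^n$, $c^n$, $ca^k$, giving $G_0\cong G_1\cong \bZ/n\bZ$; this pins down $\vert V(\Sp(P))\vert = 2n$, after which either of your two routes (explicit isomorphism via \Prr{Cay G0} and \Prr{reg}, or a covering-space comparison over $C(P)$ via \Cr{pi-one} and \Tr{cover-correspondence}) closes the argument — the second route is essentially the paper's own proof template for realising bi-Cayley graphs as \bpcg s. Handling $\gcd(n,k)>1$ by noting that $\Cay(G_0,\{a\})$ with $a=c^{-k}$ is a disjoint union of $\gcd(n,k)$ cycles, matching the inner part of $P(n,k)$, is the right observation.

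However, your bookkeeping contains three concrete slips that you should repair, all stemming from which class is "inner". First, $V_1$ should be enumerated as $\{[ba^i]\}$, not $\{[bc^i]\}$: in $G_1$ one has $c=a^{-k}$, so $c$ need not generate, and $\{[bc^i]\}$ misses vertices whenever $\gcd(n,k)>1$. Correspondingly the $b$-matching pairs $[c^i]=[ba^ib]$ with $[ba^i]$, not with $[bc^i]$. Second, in your "cleanest route" paragraph the roles of $R_0$ and $R_1$ are swapped: since $\sR_0$ contains $aba^kb$, and the walk dictated by $aba^kb$ closes up precisely at the \emph{inner} vertices ($y_i\to y_{i-k}\to x_{i-k}\to x_i\to y_i$) while at an outer vertex it closes only when $k^2\equiv 1 \pmod n$, it is $V_0$ that must map to the $y$-vertices, $\sW$ of $\pi_1$ based at a $y$-vertex that equals $R_0$, and $\sW$ of $\pi_1$ based at an $x$-vertex that equals $R_1$ — consistent with your second paragraph but opposite to your third. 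Third, and for the same reason, your stated "obstacle" is inverted: one must check that $aba^kb$ \emph{does} close up at inner vertices and does \emph{not} (generically) close up at outer ones, and that the conjugate $baba^k$ closes up at outer vertices. With these signs corrected the verification goes through as you describe.
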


Note that from the definition of $R_i$ we have $R_0 = sR_{1}s^{-1}$ for any $s \in \sS_2$.\mymargin{move} Therefore, we deduce that $G_i := R_i \backslash K \cong R_{i + 1} \backslash K$, where an isomorphism $\phi_{s,i} : G_i \rightarrow G_{i + 1}$ is given by conjugation by any $s \in \sS_2$. This property isn't enough to guarantee vertex transitivity of $\Gha$, with a counter example given by $P(4,2)$. This invites the following rather inconsice question. 

\begin{que} \label{Q vt}
	For which \bpp s $P$ is $\Sp (P)$ vertex transitive? 
\end{que}

\section{Relationships to Bi-Cayley and Haar graphs} \label{sec Bi-Cayley}

We recall that an action on a graph $\Gha$ is \defin{semi-regular} (or \defin{free}) if $g \cdot x = h \cdot x$ implies $g = h$ for every $g,h \in \Gpa$ and $x\in V(\Gha)$.
A vertex transitive graph $\Gha$ is said to be \defin{$n$-Cayley over $\Gpa$} if \Gpa\ is a semi-regular subgroup of $\Aut(\Gha)$ with $n$ orbits of vertices. If $n = 2$ we say that $\Gha$ is \defin{bi-Cayley}.

Suppose $\Gha$ is {bi-Cayley} over \Gpa. Pick two vertices $e_0, e_1 \in V(\Gha)$ from different orbits of $\Gpa$. As $\Gpa$ has exactly two orbits in $V(\Gha)$, and it acts regularly on each of them, for any $x \in V(\Gha)$ there exists a unique $i \in \bZ/2\bZ$ and $g \in G$ such that $g \cdot e_i = x$, so we define $x =: (g)_i$. Each of the two orbits $O_i:= \{(g)_i : g \in \Gpa\}$ forms a (possibly disconnected) Cayley graph of $\Gpa$ with respect to the generating sets $R = R^{-1} = \{g \in \Gpa \vert e_0 (g)_0 \in E(\Gha)\}$ and $L = L^{-1} = \{g \in \Gpa \vert e_1 (g)_1 \in E(\Gha)\}$, respectively. Here, by $vw\in E(\Gha) $ we mean that either $(v,w) \in \overrightarrow{E}(\Gha)$ or  $(w,v) \in \overrightarrow{E}(\Gha)$. To capture the set $E_{01}$ of edges of the form $(g)_0 (h)_1 \in E(\Gha)$, we introduce the set $S = \{g \in \Gpa \vert (e_0, (g)_1) \in \overrightarrow{E}(\Gha)\}$ and note that $S$ uniquely determines $E_{01}$ as any $e \in E_{01}$ coincides with $h \cdot e_0 (g)_1$ for some $g \in S$ and $h\in \Gpa$.

To summarize, we can represent any bi-Cayley graph $\Gha$ over \Gpa\  as $\Bi(\Gpa,R,L,S)$ where $R, L, S \subset \Gpa$ with $R = R^{-1}$ and $L = L^{-1}$. Then the set of directed edges of $\Gha =: \Bi(\Gpa,R,L,S)$ is 
\begin{align*}
\{((g)_0, (gr)_0) \vert  g \in \Gpa, r \in R\} \cup \{ ((g)_1, (gl)_1) \vert  g \in \Gpa, l \in L\}\\ 
\cup \{((g)_{0}, (gs)_{1}) \vert g \in \Gpa, s \in S\} \cup \{((g)_1, (gs^{-1})_0) \vert g \in \Gpa, s \in S\}.
\end{align*}
 This representation isn't unique: if we choose different vertices for $e_0, e_1$ or a different action of $\Gpa$ we potentially obtain different sets $R$, $S$ and $L$. Note that  $\Bi(\Gpa,R,L,S)$ is regular if and only if $\vert R \vert = \vert L \vert$.

\begin{examp} \label{PetersenFirstExamp} 
	Consider again the Petersen graph $\Gha = P(5,2)$ as in Example~\ref{ex Pet 1} (Figure~\ref{Petersen-no-colour}).	This has a natural action of $G:= \bZ/5\bZ = <a>$ where
	\[
	a^j : \begin{array}{c}
	x_i\\y_i
	\end{array} \mapsto \begin{array}{c}
	x_{i+j}\\y_{i+j}
	\end{array}.
	\] 
	To represent this as a bi-Cayley graph with above notation, we could choose $(a^0)_0 := x_0$ and $(a^0)_1 := y_0$. Then we obtain $R = \{a,a^4\}$, $L = \{a^2,a^3\}$ and $S = \{a^0\}$. If instead we chose $(a^0)_1 := y_1$ we would obtain $R = \{a,a^4\}$, $L = \{a^2,a^3\}$ and $S = \{a^4\}$.
\end{examp}
	\showFigTikz{
		\begin{tikzpicture}[scale = 0.7]
		\node (G-11) at (-2,0) [circle,fill=black,scale = 0.3, label=left:{$x_2$}] {};
		\node (G-12) at (2,0) [circle,fill=black,scale = 0.3, label=right:{$x_3$}] {};
		\node (G-13) at (3,3) [circle,fill=black,scale = 0.3, label=right:{$x_4$}] {};
		\node (G-14) at (0,5) [circle,fill=black,scale = 0.3, label=above:{$x_0$}] {};
		\node (G-15) at (-3,3) [circle,fill=black,scale = 0.3, label=left:{$x_1$}] {};
		\node (G-21) at (-1,1) [circle,fill=black,scale = 0.3,label=below:{$y_2$}] {};
		\node (G-22) at (1,1) [circle,fill=black,scale = 0.3,label=below:{$y_3$}] {};
		\node (G-23) at (2,3) [circle,fill=black,scale = 0.3,label=below:{$y_4$}] {};
		\node (G-24) at (0,4) [circle,fill=black,scale = 0.3, label=right:{$y_0$}] {};
		\node (G-25) at (-2,3) [circle,fill=black,scale = 0.3,label=below:{$y_1$}] {};
		\draw (G-11) edge (G-12);
		\draw (G-12) edge (G-13);
		\draw (G-13) edge (G-14);
		\draw (G-14) edge (G-15);
		\draw (G-15) edge (G-11);
		\draw (G-21) edge (G-23);
		\draw (G-23) edge (G-25);
		\draw (G-25) edge (G-22);
		\draw (G-22) edge (G-24);
		\draw (G-24) edge (G-21);
		\draw (G-11) edge (G-21);
		\draw (G-12) edge (G-22);
		\draw (G-13) edge (G-23);
		\draw (G-14) edge (G-24);
		\draw (G-15) edge (G-25);
		\end{tikzpicture}}{Petersen-no-colour}{The labelling of the Petersen graph used in Example~\ref{PetersenFirstExamp}.}

Recall that we have endowed $\Gha := \Spx$ with a colouring $c: \overrightarrow{E}(\Gha) \rightarrow \sS \cup \sS^{-1}$. We want to talk about automorphisms that preserve this colouring. The following definition distinguishes between preserving these colours globally or locally.

\begin{defn}
	Let $\Gha$ be a graph with a colouring $c: \overrightarrow{E}(\Gha) \rightarrow X$. We define the following two subgroups of $Aut(\Gha) $:
	\begin{align*}
	Aut_c(\Gha) =&  \{\phi \in Aut(\Gha) \vert c(e) = c(\phi(e)) \text{ for every } e\in \overrightarrow{E}(\Gha)\}, \mbox{ and}\\ Aut_{c-loc}(\Gha) = & \{ \phi \in Aut(\Gha) \vert c(x,y) = c(y,z) \Leftrightarrow c(\phi(x,y)) = c(\phi(y,z)) \mbox{ for all } (x,y),(y,z) \in \overrightarrow{E}(\Gha)\}. 
	\end{align*} 
\end{defn} 

\comment{
\begin{examp} \label{petersen-examp-1}
	Recall the \pp\ $P = \langle \{a\}, \emptyset, \{b\} \vert \{a^5,aba^2b\}, \{a^5\} \rangle$  of $P(5,2)$ as in Example~\ref{ex Pet 1}. 
	The corresponding colouring $c: \overrightarrow{E}(P(5,2)) \rightarrow \{a,a^{-1},b\}$ is given by (Figure~\ref{Petersen-coloured})
	\[
	c : \begin{array}{c}
	(x_i, x_{i+1})\\
	(x_i, y_i)\\
	(y_i, y_{i+2})\\
	\end{array} \mapsto \begin{array}{c}
	a^{-1}\\
	b\\
	a
	\end{array} \ \mbox{ and } \ c : \begin{array}{c}
	(x_i, x_{i-1})\\
	(y_i, x_i)\\
	(y_i, y_{i-2})\\
	\end{array} \mapsto \begin{array}{c}
	a\\
	b\\
	a^{-1}
	\end{array}.
	\]
	\showFigTikz{
		\begin{tikzpicture}[scale = 0.7]
		\node (G-11) at (-2,0) [circle,fill=black,scale = 0.3, label=left:{$x_2$}] {};
		\node (G-12) at (2,0) [circle,fill=black,scale = 0.3, label=right:{$x_3$}] {};
		\node (G-13) at (3,3) [circle,fill=black,scale = 0.3, label=right:{$x_4$}] {};
		\node (G-14) at (0,5) [circle,fill=black,scale = 0.3, label=above:{$x_0$}] {};
		\node (G-15) at (-3,3) [circle,fill=black,scale = 0.3, label=left:{$x_1$}] {};
		\node (G-21) at (-1,1) [circle,fill=black,scale = 0.3,label=below:{$y_2$}] {};
		\node (G-22) at (1,1) [circle,fill=black,scale = 0.3,label=below:{$y_3$}] {};
		\node (G-23) at (2,3) [circle,fill=black,scale = 0.3,label=below:{$y_4$}] {};
		\node (G-24) at (0,4) [circle,fill=black,scale = 0.3, label=right:{$y_0$}] {};
		\node (G-25) at (-2,3) [circle,fill=black,scale = 0.3,label=below:{$y_1$}] {};
		\draw[<-][red] (G-11) edge (G-12);
		\draw[<-][red] (G-12) edge (G-13);
		\draw[<-][red] (G-13) edge (G-14);
		\draw[<-][red] (G-14) edge (G-15);
		\draw[<-][red] (G-15) edge (G-11);
		\draw[<-][red] (G-21) edge (G-23);
		\draw[<-][red] (G-23) edge (G-25);
		\draw[<-][red] (G-25) edge (G-22);
		\draw[<-][red] (G-22) edge (G-24);
		\draw[<-][red] (G-24) edge (G-21);
		\draw[<->][blue] (G-11) edge (G-21);
		\draw[<->][blue] (G-12) edge (G-22);
		\draw[<->][blue] (G-13) edge (G-23);
		\draw[<->][blue] (G-14) edge (G-24);
		\draw[<->][blue] (G-15) edge (G-25);
		\end{tikzpicture}}{Petersen-coloured}{Our colouring of the Petersen graph corresponding to the \pp\ $P = \langle \{a\}, \emptyset, \{b\} \vert \{a^5,aba^2b\}, \{a^5\} \rangle$  of $P(5,2)$.} 
		
	To describe $Aut_c(P(5,2))$ and $Aut_{c-loc}(P(5,2))$ we look at edges coloured $b$. As $b$ edges are self inverses, both $Aut_c(P(5,2))$ and $Aut_{c-loc}(P(5,2))$ can be represented as permutations of the set of $b$ edges 
	\[
	B = \{(x_i,y_i) \vert i \in \bZ/5\bZ\} = \bZ/5\bZ.
	\] 
	Note $Aut_c(P(5,2))$ and $Aut_{c-loc}(P(5,2))$ faithfully sit inside $Sym(B)$ as no non-trivial automorphisms fixes the edges in $B$ setwise giving $Aut_c(P(5,2)), Aut_{c-loc}(P(5,2)) \leq Sym(B)$. One can show $Aut_c(P(5,2)) = \langle (0,1,2,3,4) \rangle = C_5$ and $Aut_{c-loc}(P(5,2)) = \langle (0,1,2,3,4), (1,2,4,3) \rangle = G(1,5) = \langle a,b \mid a^5, b^4, bab^{-1}a^{-2} \rangle$. We have that $Aut_{c-loc}$ is larger as it is allowed to invert the directions of the $a$ cycles. Note that $Aut_c(P(5,2)) < Aut_{c-loc}(P(5,2)) < Aut(P(5,2))$, so it is useful in some contexts to look at different colour preserving groups.
	
	The action of $Aut_c(P(5,2))$ makes $P(5,2)$ a bi-Cayley graph. In fact for any \bpp\ $P$ there is always a subgroup of $Aut_c(\Sp(P))$ where $c$ is the colouring coming from $P$ that makes $\Sp(P)$ a bi-Cayley graph. 
\end{examp}
}

We remark that for any \bpp\ $P$, there is a subgroup of $Aut_c(\Sp(P))$ witnessing that $\Sp(P)$ is a bi-Cayley graph:

\begin{prop} \label{verttransonV}
	For every \bpp\ $P = \langle \sS_1, \sU, \sI \vert \sR_0, \sR_1 \rangle$ the vertex group $G_i$ is a subgroup of $\Aut_c(\Sp(P))$. Moreover $G_i$ acts regularly on $V_i$ (and on $V_{i+1}$) for $i \in \bZ/2\bZ$, and so $\Sp(P)$ is bi-Cayley over $G_0 \cong G_1$.
\end{prop}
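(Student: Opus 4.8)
The plan is to produce the automorphisms explicitly on the tree $T_P$, where left multiplication is visibly colour-preserving, and then push the action down through the quotient map $\eta\colon T_P\to T_P/{\sim}=\Gha$, writing $\Gha:=\Sp(P)$. First I would note that $MF_P$ acts on $T_P$ by left multiplication: $g$ sends the vertex $w$ to $gw$ and the directed edge $(w,ws)$ to $(gw,gws)$. This is a bijection of $V(T_P)=MF_P$ and of $\overrightarrow E(T_P)$, it commutes with $\inv$ and with $\tau$, and since $c(gw,gws)=s=c(w,ws)$ it preserves the colouring; hence $g\mapsto(w\mapsto gw)$ is a homomorphism $MF_P\to Aut_c(T_P)$.

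Next I would restrict this action to $K=\mathrm{Ker}(\vert\cdot\vert_{\sS_2})$ and check that left multiplication by $k\in K$ respects $\sim$, so that it descends to $\Gha$. The key observation is that $\vert kv\vert_{\sS_2}=\vert v\vert_{\sS_2}$, so multiplication by $k$ preserves each coset $\tilde{V}_i$, while $(kv)^{-1}(kw)=v^{-1}w$; hence $v\sim w$ implies $kv\sim kw$ for vertices, and the edge case follows immediately since $\sim$ on $\overrightarrow E(T_P)$ is defined only through colours and termini, both of which are respected by left multiplication. Thus each $k$ induces a colour-preserving automorphism $\bar k$ of $\Gha$ (with inverse $\overline{k^{-1}}$), and $k\mapsto\bar k$ is a homomorphism $K\to Aut_c(\Gha)$.

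To identify its image I would compute the kernel. Fixing any $s\in\sS_2$, one sees that $\bar k$ fixes $V_0=\tilde{V}_0/{\sim}$ pointwise iff $k\in R_0$ (using that $R_0$ is normal in $K$), and fixes $V_1$ pointwise iff $s^{-1}ks\in R_1$, i.e.\ iff $k\in sR_1s^{-1}$; and $sR_1s^{-1}=R_0$ by the identity recorded just after \Tr{Petersen-Split-Presentation}. Hence the kernel is exactly $R_0$, and we obtain an embedding $G_0=K/R_0\hookrightarrow Aut_c(\Gha)$; composing with the isomorphism $G_0\cong G_1$ given by conjugation by $s$ exhibits the same subgroup as a copy of $G_1$, which proves the first assertion for both $i\in\bZ/2\bZ$.

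It remains to check regularity. On $V_0=K/R_0$ the induced $G_0$-action is left translation of $G_0$ on itself, hence regular. On $V_1$, using $\tilde{V}_1=sK$ and $sw\sim sw'\iff w^{-1}w'\in R_1$, the map $[sw]\mapsto wR_1$ identifies $V_1$ with $G_1=K/R_1$, under which $\bar k$ acts as left translation by $s^{-1}ks$; since conjugation by $s$ is an automorphism of $K$ carrying $R_0$ onto $R_1$, the $G_0$-action on $V_1\cong G_1$ is the regular representation of $G_1$ precomposed with an isomorphism, hence again regular. The symmetric statements for $G_1$ follow by transporting along $G_0\cong G_1$. Since $G_0$ thus acts freely on $\Gha$ with exactly the two orbits $V_0,V_1$, the graph $\Gha$ is bi-Cayley over $G_0\cong G_1$. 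The only step that needs genuine care is the kernel computation on the $V_1$-side: this is where both the identity $R_0=sR_1s^{-1}$ and the fact that conjugation by an element of $\sS_2$ maps $K$ into itself are essential; everything else is routine bookkeeping with the quotient $T_P/{\sim}$.
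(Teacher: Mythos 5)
Your proof is correct, but it takes a genuinely different route from the paper's. You work entirely on the algebraic model $\Gha=T_P/{\sim}$: you observe that left multiplication by $K$ on $MF_P$ is a colour-preserving automorphism of $T_P$ that respects $\sim$ (because $(kv)^{-1}(kw)=v^{-1}w$ and $K$ preserves the cosets $\tilde{V}_i$), compute the kernel of the induced map $K\to \Aut_c(\Gha)$ to be $R_0$ (using normality of $R_i$ in $K$ and the identity $R_0=sR_1s^{-1}$), and then verify freeness and transitivity on each $V_i$ by identifying $V_0$ with $K/R_0$ and $V_1$ with $K/R_1$ via $[sv]\mapsto vR_1$. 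The paper instead argues topologically: it identifies $G_i$ with $\pi_1(\sC(P))$ via \Cr{pi-one}, realises $G_i$ as the deck group of the universal cover $\widehat{\Gha}\to\sC(P)$, and invokes the standard facts that deck transformations of a universal cover preserve the (lifted) colouring and act freely and transitively on each fibre $\widehat{\eta}^{-1}(v_i)=V_i$. Your argument is more self-contained and makes the action completely explicit --- in particular it spells out transitivity on both orbits, which the paper leaves to the cited covering-space folklore --- at the cost of bookkeeping that leans on the index-2 structure of $K$ (the fixed $s\in\sS_2$, conjugation carrying $R_0$ to $R_1$). The paper's deck-group argument is shorter and transfers verbatim to general \pp s (\Prr{genloctrans}), whereas yours would need to be rephrased in terms of the sets $W_{x,y}$ to generalise. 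One small point worth making explicit in your write-up: you tacitly assume $\sS_2\neq\emptyset$ when fixing $s\in\sS_2$, but this is harmless since otherwise $\tilde{V}_1$ is empty and the statement degenerates.
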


\begin{proof} 
	Recall that for a covering map $\eta: X \rightarrow Y$, the group of automorphisms $f: X \rightarrow X$ such that $\eta \circ f = \eta$ is called the \defin{deck group} of $\eta$ and is denoted by $\Aut(\eta)$. It is known that if $\eta$ is a universal cover $\Aut(\eta) = \pi_1(Y)$, and if $X$ is connected and locally path connected then $\Aut(\eta)$ acts freely on $\eta^{-1}(y)$ for any $y \in Y$ \cite{HA02}.
	
	Let $\Gha := \Sp(P)$ and let $\widehat{\eta}: \widehat{\Gha} \rightarrow \sC$ be the universal cover of the presentation complex $\sC(P)$ of $P$. Thus $\Aut(\widehat{\eta}) \cong \pi_1(\sC(P)) \cong G_i$ by the above remark and Corrolary~\ref{pi-one}~\eqref{pi_one I}. As $\Gha$ is the 1-skeleton of $\widehat{\Gha}$ by Definition~\ref{spectopdef}, we can think of $\Aut(\widehat{\eta}) \cong G_i$ as a subgroup of $ \Aut(\Gha)$. Moreover as elements of $\Aut(\widehat{\eta}) \cong G_i$ preserve the cover, they preserve the colouring $c: \overrightarrow{E}(\Gha) \rightarrow \sS \cup \sS^{-1}$ obtained by lifting our colouring of $\sC(P)$ via $\widehat{\eta}$, and so we have realised $G_i$ as a subgroup of $\Aut_c(\Gha)$. As $\sC(P)$ is a connected 2-complex it is locally path connected, therefore $G_i$ acts freely on $\widehat{\eta}^{-1}(v_i) = V_i$ by the above remarks.
\end{proof}

\begin{prop}
	Every regular connected bi-Cayley $\Bi(\Gpa,R,L,S)$ graph where $R \cap R^{-1} =  L \cap L^{-1} = \emptyset$ and $\vert R \vert = \vert L \vert$ can be constructed as a \bpcg.
\end{prop}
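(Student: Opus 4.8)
The plan is to realise $\Gha:=\Bi(\Gpa,R,L,S)$ as the $1$-skeleton of a covering of its quotient by the given semiregular $\Gpa$-action, after recognising that quotient as a presentation graph and then choosing relator sets so that the covering matches.

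\emph{Setting up the quotient graph.} Since $R=R^{-1}$ has no involution we may split $R=R_+\sqcup R_+^{-1}$, and likewise $L=L_+\sqcup L_+^{-1}$; the regularity hypothesis $|R|=|L|$ gives $|R_+|=|L_+|$. Let $\bar\Gha:=\Gha/\Gpa$ be the quotient graph of the left-multiplication action. Its vertices are the two orbits $O_0,O_1$, giving vertices $\bar v_0,\bar v_1$; the $O_0$-internal edges quotient to $|R_+|$ loops at $\bar v_0$ (the orbit of $\big((g)_0,(gr)_0\big)$ depends only on $r$, and pairs with the orbit indexed by $r^{-1}\ne r$), the $O_1$-internal edges to $|L_+|$ loops at $\bar v_1$, and the cross-edges to $|S|$ edges between $\bar v_0$ and $\bar v_1$ (one for each $s\in S$). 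Choose $\sS_1$ of size $|R_+|=|L_+|$, put $\sU:=\emptyset$, and let $\sI$ be a set of size $|S|$ indexed by $S$; set $\sS:=\sS_1\cup\sI$. The presentation graph $C(P)$ of any \bpp\ $P=\langle\sS_1,\emptyset,\sI\mid\cdot,\cdot\rangle$ consists of two copies of $\Ro_{\sS_1}$ joined by $|\sS_2\cup\sS_2^{-1}|=|\sI|=|S|$ edges, hence is isomorphic to $\bar\Gha$. Colouring the loops of $\bar\Gha$ at each $\bar v_j$ by $\sS_1\cup\sS_1^{-1}$ and the cross-edges by $\sI$ makes $\bar\Gha$ a graph with a Cayley-like colouring; fix a colour-preserving isomorphism $\iota\colon\bar\Gha\to C(P)$ with $\iota(\bar v_j)=v_j$, and endow $\Gha$ with the Cayley-like colouring obtained by pulling back that of $C(P)$ along $\psi$ and $\iota$.

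\emph{The covering and the presentation.} The action of $\Gpa$ on $\Gha$ is free (this is "semiregular") and has no inversions: an element inverting an $O_j$-internal edge would force an involution in $R$ or $L$, which is excluded, and no element can invert a cross-edge since it would have to interchange $O_0$ and $O_1$. Hence the quotient map $\psi\colon\Gha\to\bar\Gha$ is a covering of topological spaces, and it is normal with deck group $\Gpa$ because $\Gpa$ acts transitively on each fibre $\psi^{-1}(\bar v_j)=O_j$. Fixing a basepoint $e_0\in O_0$, the map $(\iota\circ\psi)_*$ is injective and $N:=(\iota\circ\psi)_*\!\big(\pi_1(\Gha,e_0)\big)$ is a \emph{normal} subgroup of $\pi_1(C(P),v_0)$; by \Cr{pi-one}(2), $\sW_{v_0}$ identifies $\pi_1(C(P),v_0)$ with $K\le MF_P$, and we put $N':=\sW_{v_0}(N)\trianglelefteq K$. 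Now define $P:=\langle\sS_1,\emptyset,\sI\mid\sR_0,\sR_1\rangle$ with $\sR_0:=N'$ and $\sR_1:=\emptyset$; this is admissible since $N'\subseteq K$. Because $\sR_1=\emptyset$, the subgroup $R_0$ of \Dr{specgrpdef} is the normal closure of $N'$ in $K$, which equals $N'$ as $N'$ is normal. By \Cr{part} and \Cr{pi-one}(3) and (4) (together with the identification of the covers $\nu,\epsilon$ made after \Tr{equiv-defn}), the covering $\epsilon\colon\Sp(P)\to C(P)$ is the one corresponding, under $\sW_{v_0}$, to the subgroup $R_0=N'$, i.e.\ $\epsilon_*\!\big(\pi_1(\Sp(P))\big)=N$. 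Thus $\epsilon$ and $\iota\circ\psi$ are path-connected coverings of $C(P)$ associated to the same subgroup $N\le\pi_1(C(P))$, so \Tr{cover-correspondence} yields an isomorphism of graphs $\Sp(P)\cong\Gha$; since every map involved respects the colourings, so does this isomorphism. Hence $\Gha$ is a \bpcg. (If $\Gpa$ is finitely presented one may instead take $\sR_0$ to be any finite set normally generating $N'$ in $K$, obtaining a finite presentation.)

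\emph{Main obstacle.} The delicate point is the verification that $\psi$ is genuinely a covering map, i.e.\ that the $\Gpa$-action has no inversions; this is exactly where the hypothesis $R\cap R^{-1}=L\cap L^{-1}=\emptyset$ (no involutions in $R$ or $L$) enters, and it is also what makes $\bar\Gha$ an honest graph with loops rather than half-edges. The hypothesis $|R|=|L|$ is used in precisely one place too: it lets the two roses at $\bar v_0$ and $\bar v_1$ be copies of the \emph{same} $\Ro_{\sS_1}$, as a presentation graph requires. Everything else is routine translation through the covering-space dictionary of \Cr{pi-one}.
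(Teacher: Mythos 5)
Your proof is correct and follows essentially the same route as the paper's: both build the presentation $\langle\sS_1,\emptyset,\sI\mid\sR_0,\emptyset\rangle$ with $\sS_1$ an orientation of $R$, $\sI=S$ and $\sR_0$ the $\sW$-image of $\pi_1(\Gha)$, and both conclude via the classification of covering spaces applied to the covering $\Gha\to C(P)$. The only (cosmetic) difference is that you obtain $\langle\langle\sR_0\rangle\rangle_K=\sR_0$ abstractly from the normality of the quotient covering by the semi-regular $\Gpa$-action, whereas the paper verifies the same normality by the explicit conjugation identities \eqref{gG} and \eqref{WW}.
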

\begin{proof}
	Let $\Gha := \Bi(\Gpa,R,L,S)$ be a a bi-Cayley graph, and recall our representation of its vertex set as $V(\Gha) = \{(g)_i \vert g \in \Gpa, \ i \in \bZ/2\bZ\}$. Choose $\sS_1 \subset R$ such that $\sS_1 \cap \sS_1^{-1} = \emptyset$ and yet $\sS_1 \cup \sS_1^{-1} = R$. Choose a bijection $f: L \rightarrow R$ such that $f(s^{-1}) = f(s)^{-1}$. We use $f$ to define the colouring $c: \overrightarrow{E}(\Gha) \rightarrow \sS_1 \cup \sS_1^{-1} \cup S$ as follows:
	\[
	c : \begin{array}{c}
	((g)_0, (rg)_0)\\
	((g)_1, (lg)_1)\\
	((g)_0, (sg)_{1})^{\pm 1}
	\end{array} \mapsto \begin{array}{c}
	r\\
	f(l)\\
	s
	\end{array} \ \mbox{ for } \ \begin{array}{c}
	r \in R\\
	l \in L\\
	s \in S.
	\end{array}
	\]
	Note that this colouring is Cayley-like, as there is a unique edge of each colour incident with each vertex. Let $\sI := S$, and set $\sR_0 := \sW_{(1_{\Gpa})_0}(\pi_1(\Gha,(1_{\Gpa})_0))$. We have thus constructed a \bpp\ $P := \langle \sS_1, \emptyset, \sI \vert \sR_0, \emptyset \rangle$. We claim that $\Gha \cong \Sp(P)$. 
	
	To see this, let as usual $\sC(P) =: \sC$ be the presentation complex and $C(P) =: C$ the presentation graph with vertices $v_i, i \in \bZ/2\bZ$ and edges $\overrightarrow{E}(C) = \{r(v_i,v_i), s(v_i, v_{i+1}) \vert r \in \sS_1 \cup \sS_1^{-1}, \ s \in S = \sI\}$ where $c_C(x(v_i,v_j)) = x$. We will prove $\Gha \cong \Sp(P)$ by applying Theorem \ref{cover-correspondence} to a cover  $\eta : \Gha \rightarrow C$ defined by $\eta: (g)_i \mapsto v_i$, and
	\[
	\eta : \begin{array}{c}
	((g)_0, (rg)_0)\\
	((g)_1, (lg)_1)\\
	((g)_0, (sg)_{1})^{\pm 1}
	\end{array} \mapsto \begin{array}{c}
	r(v_0,v_0)\\
	f(l)(v_1,v_1)\\
	(s(v_0,v_1))^{\pm 1}
	\end{array} \ \mbox{ for } \ \begin{array}{c}
	r \in R\\
	l \in L\\
	s \in S
	\end{array}.
	\]
	
	As $\eta$ is a map of graphs with Cayley-like colourings, and $\eta$ respects these colourings by definition, it is indeed a cover. We have  $\sW_{v_0}(\eta_{\ast}(\pi_1(\Gha,(1_{\Gpa})_0))) = \sR_0$ by  the choice of $\sR_0$. Let $\epsilon: \Sp(P) \rightarrow C$ represent the cover given in definition~\ref{spectopdef} of $\Sp(P)$ (as in Figure~\ref{maps-equiv-defn}). By Corollary \ref{pi-one} (3) we have that $\sW_{v_0}(\epsilon_{\ast}(\pi_1(\Sp(P)),v)) = R_0 := \langle \langle \sR_0 \rangle \rangle_K$ for some $v \in V(\Sp(P))$ such that $\epsilon(v) = v_0$. Note that for any $k \in K$ the path $\sW_{(1_{\Gpa})_0}^{-1}(k)$ connects $(1_{\Gpa})_0$ to $(g)_0$ for some $g \in \Gpa$ because it uses an even number of  edges $e$ with $c(e)\in S$. This implies 
\labtequ{gG}{$\sW_{(1_{\Gpa})_0}^{-1}(k)\pi_1(\Gha,(g)_0) \sW_{(1_{\Gpa})_0}^{-1}(k)^{-1} = \pi_1(\Gha,(1_{\Gpa})_0)$.}
As there exists a colour preserving automorphism of $\Gha$ mapping $(1_{\Gpa})_0$ to $(g)_0$, namely $g$, we moreover have  
\labtequ{WW}{$\sW_{(g)_0}(\pi_1(\Gha,(g)_0)) = \sW_{(1_{\Gpa})_0}(\pi_1(\Gha,(1_{\Gpa})_0))$.}
Therefore  $\langle \langle \sR_0 \rangle \rangle_K = \sR_0$ by \eqref{gG}, \eqref{WW} and the definition of $\sR_0$. Using this we have $\sW_{v_0}(\epsilon_{\ast}(\pi_1(\Sp(P),v))) = \sR_0$.
Moreover, we have $\sW_{v_0}(\eta_{\ast}(\pi_1(\Gha,(1_{\Gpa})_0))) = \sR_0$ by the definition of $\sR_0$ and \eqref{W cover}. Therefore $\sW_{v_0}(\epsilon_{\ast}(\pi_1(\Sp(P),v))) = \sW_{v_0}(\eta_{\ast}(\pi_1(\Gha,(1_{\Gpa})_0)))$, and so by Theorem \ref{cover-correspondence} we have $\Gha \cong \Sp(P)$.
\end{proof}

A \defin{Haar graph} is a bi-Cayley graph of the form $\Bi(\Gpa,\emptyset,\emptyset,S)$. The following is an immediate consequence of the last two propositions.

\begin{cor}
	Every Haar graph can be represented as a \bpcg, and every \bpcg\ $\Sp(\langle \sS_1, \sU, \sI \vert \sR_0, \sR_1 \rangle)$ with $\sS_1 = \sU = \emptyset$ is a Haar graph.
\end{cor}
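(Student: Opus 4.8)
The plan is to deduce both halves of the corollary from \Prr{verttransonV} and from the proposition immediately preceding it, with essentially no new work needed.

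For the implication ``every Haar graph is a \bpcg'', I would start from a Haar graph $\Bi(\Gpa, \emptyset, \emptyset, S)$ and simply check that it satisfies the hypotheses of the preceding proposition: it is $\vert S \vert$-regular, and with $R = L = \emptyset$ the conditions $R \cap R^{-1} = L \cap L^{-1} = \emptyset$ and $\vert R \vert = \vert L \vert$ hold vacuously. (Since any \bpcg\ $\Sp(P) = T_P/\sim$ is a quotient of a tree and hence connected, the connectedness hypothesis of that proposition is not a genuine restriction here — only connected Haar graphs can possibly arise as \bpcg s, so that is the case to treat.) The preceding proposition then yields a \bpp\ $P$ with $\Bi(\Gpa,\emptyset,\emptyset,S) \cong \Sp(P)$, and a glance at the construction in its proof shows that because $R = \emptyset$ one is forced to take $\sS_1 = \emptyset$, while the construction already sets $\sU = \emptyset$ (taking $\sI := S$ and $\sR_1 := \emptyset$). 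So in fact $P = \langle \emptyset, \emptyset, S \vert \sR_0, \emptyset \rangle$, which is slightly more than asked.

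For the converse, given $P = \langle \emptyset, \emptyset, \sI \vert \sR_0, \sR_1 \rangle$ and $\Gha := \Sp(P)$, I would invoke \Prr{verttransonV}: the group $G_0 \cong G_1$ sits inside $\Aut_c(\Gha)$ and acts regularly on each of its two vertex orbits, which are exactly $V_0$ and $V_1$; hence $\Gha$ is bi-Cayley over $G_0$, say $\Gha \cong \Bi(G_0, R, L, S)$. By the recipe used to pass from a bi-Cayley graph to such a representation (choosing base points $e_i \in V_i$), the sets $R$ and $L$ record precisely the edges of $\Gha$ lying inside $V_0$ and inside $V_1$ respectively, while $S$ records the edges between the two classes. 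But the full generating set of $P$ is $\sS = \sS_1 \cup \sU \cup \sI = \sI \subseteq \sS_2$, so every edge of $\Gha$ carries a colour in $\sS_2$, and by \Cr{part} every such edge joins $V_i$ to $V_{i+1}$. Hence $\Gha$ has no edge inside $V_0$ and none inside $V_1$, forcing $R = L = \emptyset$, and $\Gha \cong \Bi(G_0, \emptyset, \emptyset, S)$ is a Haar graph.

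The only point that needs a little care — and where I would concentrate the write-up — is the bookkeeping of orbits: one must make sure that the two orbits of the regular $G_0$-action produced by \Prr{verttransonV} are literally the classes $V_0, V_1$ appearing in \Cr{part}, so that ``no intra-orbit edges'' becomes ``$R = L = \emptyset$''. This is immediate from the statement of \Prr{verttransonV}, so no real obstacle remains; everything else is routine unwinding of the definitions of Haar graph and of $\Sp(P)$.
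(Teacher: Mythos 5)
Your proposal is correct and is exactly the argument the paper intends: the paper offers no written proof, stating only that the corollary "is an immediate consequence of the last two propositions," and your unwinding — applying the preceding proposition with $R=L=\emptyset$ for one direction, and combining \Prr{verttransonV} with \Cr{part} to see that $\sS=\sI\subseteq\sS_2$ forces all edges to cross between $V_0$ and $V_1$ for the other — is precisely that deduction. Your explicit handling of the connectedness hypothesis is, if anything, more careful than the paper's statement.
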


Most of our motivation for introducing  \pp s came from studying vertex transitive graphs. Our next  proposition gives a sufficient condition for $\Sp(P)$ to be vertex transitive in terms of the `symmetry' of $\sC(P)$. Given two CW complexes $\sC_i$ for $i \in \bZ/2\bZ$, recall that a \defin{simplicial map} $\phi : \sC_0 \rightarrow \sC_1$ is a continuous map that maps each $n$-simplex to an $n$-simplex for every $n$. For a CW complex $\sC$, the group of bijective simplicial maps from $\sC$ to itself is denoted by $\Aut(\sC)$.

\begin{prop} \label{prestrans}
	Let $P$ be a \bpp. As above, the two vertices of the presentation complex $\sC$ are denoted by $v_0$ and $v_1$. If there exists a simplicial map $\phi: \sC \rightarrow \sC$ such that $\phi(v_0) = v_1$, then $\Sp(P)$ is vertex transitive. 
\end{prop}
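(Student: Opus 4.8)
The goal is to promote a simplicial symmetry of the presentation complex $\sC := \sC(P)$ to a vertex-transitive action on $\Gha := \Sp(P)$. Recall that $\Gha$ is (the $1$-skeleton of) the universal cover $\widehat{\Gha}$ of $\sC$, via the covering map $\widehat{\epsilon} : \widehat{\Gha} \to \sC$, and that by \Prr{verttransonV} the vertex set $V(\Gha) = V_0 \cup V_1$ with $V_i = \widehat{\epsilon}^{-1}(v_i)$, and the deck group $\Aut(\widehat{\epsilon}) \cong \pi_1(\sC) \cong G_i$ already acts regularly on each $V_i$. So it suffices to produce a single automorphism of $\Gha$ that swaps $V_0$ and $V_1$: together with the deck group this generates a group acting transitively on $V(\Gha)$.

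\medskip
First I would take the given simplicial map $\phi : \sC \to \sC$ with $\phi(v_0) = v_1$. Since $\sC$ has only two vertices and $\phi$ is bijective, necessarily $\phi(v_1) = v_0$ as well, so $\phi$ is an involution swapping the two vertices (or at least $\phi$ maps $v_1$ to $v_0$; if $\phi$ is merely simplicial and not assumed bijective one first argues it must still send $v_1 \mapsto v_0$ because every $2$-cell of $\sC$ is attached at a single vertex and its image is a $2$-cell, forcing the vertex map to be a bijection on the two-element vertex set). Next, I would lift $\phi$ through the universal cover: by the lifting criterion for covering spaces (\Sr{sec covers}), the composition $\phi \circ \widehat{\epsilon} : \widehat{\Gha} \to \sC$ lifts to a map $\widehat{\phi} : \widehat{\Gha} \to \widehat{\Gha}$ with $\widehat{\epsilon} \circ \widehat{\phi} = \phi \circ \widehat{\epsilon}$, since $\pi_1(\widehat{\Gha}) = 1$ is trivially contained in any subgroup; and the lift can be chosen to send a fixed point of $V_0$ to any prescribed point of $\widehat{\epsilon}^{-1}(\phi(v_0)) = \widehat{\epsilon}^{-1}(v_1) = V_1$. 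Applying the same argument to $\phi^{-1}$ (or to $\phi$ again if it is an involution) and using uniqueness of lifts agreeing at a point, one checks $\widehat{\phi}$ is a homeomorphism, hence an automorphism of the CW complex $\widehat{\Gha}$, and therefore restricts to an automorphism of its $1$-skeleton $\Gha$. By construction $\widehat{\epsilon} \circ \widehat{\phi} = \phi \circ \widehat{\epsilon}$ forces $\widehat{\phi}(V_0) = \widehat{\phi}(\widehat{\epsilon}^{-1}(v_0)) = \widehat{\epsilon}^{-1}(\phi(v_0)) = V_1$ and symmetrically $\widehat{\phi}(V_1) = V_0$.

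\medskip
Finally, I would assemble the action: let $H := \langle \Aut(\widehat{\epsilon}), \widehat{\phi} \rangle \leq \Aut(\Gha)$. Given any two vertices $x, y \in V(\Gha)$, after possibly applying $\widehat{\phi}$ to $x$ we may assume $x$ and $y$ lie in the same part $V_i$; then since $\Aut(\widehat{\epsilon}) \cong G_i$ acts regularly on $V_i$ by \Prr{verttransonV}, some deck transformation carries $x$ to $y$. Hence $H$ acts transitively on $V(\Gha)$ and $\Gha$ is vertex transitive.

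\medskip
The main obstacle is the lifting step: one must verify that $\phi \circ \widehat{\epsilon}$ genuinely satisfies the hypotheses of the lifting criterion (in particular that $\sC$ and $\widehat{\Gha}$ are path-connected and locally path-connected, which holds since $\sC$ is a connected CW $2$-complex), that the resulting lift $\widehat{\phi}$ is a bijection and not merely a continuous map — this is where one invokes the standard fact that a lift of a homeomorphism through a universal cover, together with a lift of its inverse, are mutually inverse by the uniqueness of lifts — and that $\widehat{\phi}$ is cellular, i.e. maps the $1$-skeleton to the $1$-skeleton, which follows because $\phi$ is simplicial and $\widehat{\epsilon}$ is cellular. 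Everything else is bookkeeping with the correspondence between deck transformations and $\pi_1$.
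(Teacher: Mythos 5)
Your proposal is correct and follows essentially the same route as the paper: lift $\phi\circ\widehat{\epsilon}$ through the universal cover to an automorphism $\widehat{\phi}$ of $\widehat{\Gha}$ satisfying $\widehat{\epsilon}\circ\widehat{\phi}=\phi\circ\widehat{\epsilon}$, observe that it therefore carries $V_0=\widehat{\epsilon}^{-1}(v_0)$ onto $V_1$, and combine this with the regular action of the vertex groups on each part from \Prr{verttransonV}. You simply spell out details the paper leaves implicit (that the lift is a homeomorphism, is cellular, and that $\phi$ swaps the two vertices), which is fine but not a different argument.
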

\begin{proof}
Set $\Gha := \Sp(P)$. Lemma \ref{verttransonV} says that $G_i$ acts transitively on $V_j$ for $j \in \bZ/2\bZ$. Thus it only remains to find an automorphism which maps a vertex in $V_0$ to a vertex in $V_1$. We have a covering map $\epsilon: \widehat{\Gha} \rightarrow \sC$, where $\widehat{\Gha}$ is the universal cover of $\sC$ with 1-skeleton $\Gha$. By the lifting property $\phi \circ \epsilon : \widehat{\Gha} \rightarrow \sC$ lifts to an automorphism $\widehat{\phi} \in \Aut(\widehat{\Gha})$ such that $\phi \circ \epsilon = \epsilon \circ \widehat{\phi}$. For any $v \in V_{i}$ we have  $\epsilon(v) = v_i$ by \eqref{eps vV}. So for $v \in V_0$ we have $\epsilon \circ \widehat{\phi}(v) = \phi \circ \epsilon(v) = \phi(v_0) = v_{1}$ giving that $\widehat{\phi}(v) \in V_{1}$. Thus when restricting $\widehat{\phi}$ to the 1-skeleton, $\Gha$, we obtain the required automorphism.
\end{proof}

We remark that this sufficient condition is not necessary for $\Sp(P)$ to be vertex transitive. For example,   there is never such an automorphism for the \pp s  $\langle \{a\}, \{\}, \{b\} \vert \{a^n,aba^kb\},\{a^n\} \rangle$ of \Tr{Petersen-Split-Presentation} unless $k = \pm 1$. However, we know that $P(n,k)$ is transitive for many other choices of $n$ and $k$ (such as the case of the Petersen graph $n = 5,k = 2$), 
see \cite{FGW71}.

\section{$n$-\pp s for $n>2$} \label{sec general}

\comment{
Before stating a definition we want to talk in abstract about what a `presentation' so far is. A presentation consists of the following data
\begin{itemize}
	\item a regular graph $\Gha$,
	\item a \pf\ weak multicycle colouring of $c: \overrightarrow{E}(\Gha) \rightarrow \sS \cup \sS^{-1}$, and
	\item a set of relators for each $v \in V(\Gha)$ with $\sR_v \subset \pi_1(\Gha,v)$.
\end{itemize}
For a group presentations $\langle \sS \vert \sR \rangle$ the graph is $Ro_{\sS}$ with each edge coloured by $\sS$. Then using $c$ we can identify $\pi_1(Ro_{\sS},v) = F_{\sS}$, so the relator set $\sR \subset F_{\sS} = \pi_1(Ro_{\sS},v)$.

For a \bpp\ $P = \langle \sS_1, \sU, \sI \vert \sR_0, \sR_1 \rangle$ the graph is $C(P)$ with the colouring $c$ given by the construction of $\sC(P)$. Then using $c$ we can identify $\pi_1(C(P), v_i) = K \subset MF_P$, therefore $\sR_i \subset K$ gives our relator sets.

Note that to have a `presentation' in our sense, the regular graph $\Gha$ needs to have a weak multicycle colouring. A simple corollary of Theorem \ref{2-regular-sub-graph} says all even degree regular graphs have such a colouring. However by Tutte's theorem \cite{BM76} one can construct $2k+1$-regular graphs with no \pf\ multicycle colouring see Example \ref{no-weak-multicycle-colouring}.
}
\subsection{Definition of \pp s} \label{sec def gsp}

In this section we generalise our notion of \pp\ by allowing for more than two classes of vertices $V_i$. This will allow us to describe vertex transitive graphs such as the Coxeter graph 
which cannot be expressed as a bi-Cayley graph. 

In \Dr{spec sp pres def} of a \bpp\ we did not explicitly talk about the two vertex classes, but they were implicit in that definition: we had two sets of relators $\sR_0, \sR_1$, and the definition of $K$ implicitly distinguished our generators into those staying in the same vertex class, namely $\sS_1$, from those swapping between the two vertex classes, namely  $\sS_2$. The two vertex classes $V_i$ were defined a-posteriori, and \Cr{part} confirms that the generators gave rise to edges of the \pcg\ behaving this way.

The following definition is a direct generalisation of \Dr{spec sp pres def}, although it is formulated a bit differently. We now make the vertex classes more explicit. The main complication arises from the fact that we have to specify, for each generator $s$, which vertex class any edge coloured by $s$ will lead into if it starts at a given vertex class. This information is encoded as a permutation $\phi(s)$ of the set of vertex classes. As before, we distinguish our generators into two subsets $\sU$ and $\sI$ to allow for `involutions' that make \pcg s with odd degrees possible.

We now give the formal definition:

\begin{defn} \label{def GSP}
	A \pp\ $\langle X \vert \sU \vert \sI \vert \phi \vert \sR \rangle$ consists of the following data:
	\begin{enumerate}
		\item
		a set of vertex classes $X$;
		\item \label{SP ii}
		a generator set $\sS$, which is partitioned into two sets $\sU$ and $\sI$; as before, we use $\sS$ to define a group $MF_P := \langle \sS \vert \{s^2 \vert s \in \sI \} \rangle$ (a free product of cyclic groups each of order 2 or $\infty$);
		\item \label{SP iii}
		a map $\phi: \sS \rightarrow \Sym_{X}$ from the generator set to the group $\Sym_{X}$ of permutations of $X$; \\
	We remark that any such map defines an action of $MF_P$ on $X$ via $s_1 \ldots s_n \cdot x := \phi(s_1) \circ \ldots \circ \phi(s_n)(x)$, where $s_i \in \sS \cup \sS^{-1}$, and $\phi(s^{-1}) := \phi(s)^{-1}$. We require that
		\begin{enumerate}
		\item \label{phi II} this action of $MF_P$ on $X$ is transitive, 
			 and 
	\item \label{phi I} for all $s \in \sI$ the permutation $\phi(s)$ is fixed point free of order 2;
		\end{enumerate} 
		\item \label{SP iv}
		a \defin{relator set} $\sR_x \subset Stab(MF_P,x) = $ for each $x \in X$, where $Stab(MF_P,x)$ denotes the stabiliser of $x$ with respect to the aforementioned action of $MF_P$. (This is a natural condition, as we want to return to our starting vertex when following a walk labelled by a relator, and in particular we want to return to the same vertex class.)\\
		 The set $\{ \sR_x : x\in X\}$ of these relator sets is denoted by $\sR$.
	\end{enumerate}

\end{defn}

We now use such a presentation $P = \langle X \vert \sU \vert \sI \vert \phi \vert \sR \rangle$ to define the \pcg\ $\Sp(P)$, in analogy with \Dr{spectopdef}.
We start by defining the \defin{presentation graph} $C(P)$. This has vertex set $X$, and directed edge set $\{ (x, \phi(s)(x) \vert$ for all $x \in X$ and $s \in \sU \cup (\sU)^{-1} \cup \sI\}$ where $\phi(s^{-1}) = \phi(s)^{-1}$. We colour it by  $c: \overrightarrow{E}(C(P)) \rightarrow \sS \cup \sS^{-1}$ defined by $c(x, \phi(s)x) := s$, and note that this is a Cayley-like colouring as in \Dr{def Cl}.

	The \defin{\pp\ complex} $\sC(P)$ is the 2-complex obtained from $C(P)$ as follows. For each $x\in X$ and each $r \in \sR_x$,  we introducing a 2-cell and glue its boundary along the walk of  $C(P)$ starting at $x$ and dictated by $r$ (as in  \Dr{def dict}). It is straightforward to check that this is a closed walk using \eqref{SP iv}.

	Note that $\sC(P)$ is connected by condition \eqref{phi II}. Finally, 
	
\begin{defn} \label{def SpP} 
	We define the \defin{\pcg} $\Sp(P)=\Sp\langle X \vert \sU \vert \sI \vert \phi \vert \sR \rangle$ to be the 1-skeleton of the universal cover of $\sC(P)$. 
\end{defn}

	Letting $\epsilon: \Sp(P) \rightarrow C(P)$ be the covering map, we can lift $c$ to the edge-colouring $\tilde{c} =c \circ \epsilon$ of $\Sp(P)$.

	Note that if $X$ is a singleton, then we recover the usual group presentations and \Cg s by the above definitions. Our \bpp s $\langle \sS_1, \sU', \sI' \vert \sR_0, \sR_1 \rangle$ of \Sr{sec special} are tantamount to \pp s as in \Dr{def GSP} with $X = \{0,1\}$, where $\phi(s_1) = (0)(1)$ for $s_1 \in \sS_1$ and $\phi(s_2) = (0,1)$  for $s_2 \in \sS_2:= \sU' \cup \sI'$, with $\sU = \sS_1 \cup \sU'$ and $\sI = \sI'$.

\mymargin{Needed?: Then $\pi_1(C(P), x) = Stab(MF_P,x)$ using the colouring $c$, therefore $\sR_x \subset Stab(MF_P,x) = \pi_1(C(P),x)$.}

\bigskip
As in \Sr{sec special}, we can alternatively define  $\Sp(P)$ as a graph quotient, following the lines of Definition \ref{specgrpdef}, as follows:
\begin{enumerate}
	\item Let $\sS := \sU \cup \sI$ and define the group $MF_P$ by the presentation $ \langle \sS \vert \{s^2 : s \in \sI \} \rangle$; this is a free product of infinite cyclic groups, one for each $s\in \sU$, and cyclic groups of order 2, one  for each $s\in \sI$. Define the tree $T_P$  by
	\begin{align*}
		V(T_P) & := MF_P, \mbox{ and }\\
		\overrightarrow{E}(T_P) & := \{(w, ws) \vert w \in MF_P, s \in \sS \cup \sS^{-1}\}.
	\end{align*}
	This is a  $(2\vert \sU \vert + \vert \sI \vert)$-regular tree, and it comes with a colouring $c: \overrightarrow{E}(T_P) \rightarrow \sS \cup \sS^{-1}$ by $c(w, ws) = s$.
	\item 
	We can extend the map $\phi$ of (3) from $\sS$ to an action of $MF_P$ by composition: we let $x \cdot s_1 \ldots s_n := \phi(s_n) \circ \ldots \circ \phi(s_1)(x)$ for all  $x\in X$ and $s_i\in \sS$.  Let $W_{x,y} = \{w \in MF_P \vert \phi(w)(x) = y\}$ for $x,y \in X$. Fixing any `base' vertex class $b \in X$ leads to a partition of $V(T_P) = MF_P$, namely $\tilde{V}_{x} = W_{b,x}$. Note that two vertices in $u,v \in \tilde{V}_x \subset MF_P$ differ by a word $u^{-1}v \in W_{x,x} = Stab(MF_P,x)$.
	\item Let $R_x = \langle wrw^{-1} \vert r \in \sR_y, w \in W_{x,y}, y \in X \rangle \subset W_{x,x}$. Then we say that two vertices in $u,v \in \tilde{V}_x$ are equivalent, and write $u \sim v$, if $u^{-1}v \in R_{x}$. Similarly, for edges $e, f \in \overrightarrow{E}(T_P)$ we write $e \sim f$ if $c(e) = c(f)$ and $\tau(e) \sim \tau(f)$ and $\tau(e^{-1}) \sim \tau(f^{-1})$.
	\item We define $\Sp(P)$ to be the corresponding quotient  $T_P/\sim$.
\end{enumerate}

As in \Cr{part}, it is not hard to see that $T_P$ is the universal cover of $\Sp(P)$. Define $V_x, x \in X$ as the image of $\tilde{V_x}$ under the quotient of $\sim$. We have $W_{x,x} = \pi_1(C(P),x)$ and $\pi_1(\sC(P),x) = R_x \backslash W_{x,x} =: G_x$, analogously to the \bpp\ case. We call $G_x, x \in X$ the \defin{vertex groups}. 

We remark that the vertex set of $\Sp(P)$ can be given the structure of a groupoid $\sG_{\Sp(P)}$. Indeed, we can think of $\bigcup_{x,y \in X} W_{x,y}$ as the ground set, and define the groupoid operation $W_{x,y} \times W_{y,z} \rightarrow W_{x,z}$ by concatenation. Another way to think of this groupoid is $\sG_{\Sp(P)} \cong \pi_1(\sC(P), X)$, the universal groupoid of the presentation complex $\sC(P)$, with paths starting and ending in $V(C)$. 

\medskip
The main result of this section is that every vertex transitive graph $\Gha$ is isomorphic to $\Sp(P)$ for some \pp\ $P$. For the proof of this we will need to decompose the edges of $\Gha$ into cycles. The next section discusses such decompositions.

\subsection{Multicycle colourings} \label{sec mcc}

Leighton \cite{Le83} asked whether vertex transitive graphs have similar colouring structures to Cayley graphs of groups. For a Cayley graph $\Gha = \Cay(\Gpa, \sS)$, the generators canonically induce a colouring $c: E(\Gha) \rightarrow \sS$ as above, so that $c^{-1}(s)$ is a disjoint union of cycles of the same length for every $s\in \sS$. Leighton calls this a multicycle:

\begin{defn} \label{def mc}
	A \defin{multicycle} is a graph which is either the disjoint union of cycles of the same length or a perfect matching. A \defin{multicycle colouring} of a  graph $\Gha$ is a colouring $c: E(\Gha) \rightarrow \Omega$ such that the graph with vertex set $V(\Gha)$ and edge set $c^{-1}(x)$ is a multicycle for each $x \in \Omega$.
\end{defn}
Thus every Cayley graph has a multicycle colouring. Leighton  \cite{Le83} conjectured that all vertex transitive graphs have a multicycle colouring \cite{Le83}, but this was shown to be false by Maru\v{s}i\v{c} \cite{Ma81}, a counter-example being the line graph of the Petersen graph:

\begin{examp} \label{Line-Graph-Petersen}
	Given a graph $\Ghb$ we construct the line graph $\Gha := L(\Ghb)$ as follows. We set $V(\Gha) := E(\Ghb)$ and $\overrightarrow{E}(\Gha) = \{(e,e') \vert \tau(e^{\pm 1}) = \tau(e'^{\pm 1})\}$. To see there is no multicycle colouring of $L(P(5,2))$, note that it has $\vert V(L(P(5,2))) \vert = \vert E(P(5,2)) \vert = 15$ vertices, so any mutlicycle will have to consist of triangles, pentagons, or 15-cycles. Any 15-cycle in $L(P(5,2))$ would yield a Hamiltonian cycle in $P(5,2)$, which we know does not exist. Moreover, the only triangles in $L(P(5,2))$ are formed by edges incident with a single vertex of $P(5,2)$. As $P(5,2)$ is not bipartite, there is no way to partition the triangles into disjoint sets that pass through all vertices. So we can only use sets of five cycles, which correspond to sets of edge disjoint pentagons in $P(5,2)$. As $P(5,2)$ is cubic, there is no set of pentagons that visits every edge exactly once. 
	
	Still, it is possible to express  $L(P(5,2))$ as a \pcg:\\ $\mbox{\Spl}\langle\textcolor{red}{a} \mapsto (12)(3), \textcolor{blue}{b} \mapsto (1)(23) \vert \{b^5, a^{10}, a^2b\}, \{a^{-2}b^4\}, \{a^5, b^{10}, b^2a\} \rangle$
	\showFigTikz{
		\begin{tikzpicture}[scale=0.3]
		\node[circle,fill=black,scale = 0.3] (v1) at (-0.5,3.5) {};
		\node[circle,fill=black,scale = 0.3] (v4) at (-2,2.5) {};
		\node[circle,fill=black,scale = 0.3] (v3) at (1,2.5) {};
		\node[circle,fill=black,scale = 0.3] (v2) at (-1.5,1) {};
		\node[circle,fill=black,scale = 0.3] (v5) at (0.5,1) {};
		\node[circle,fill=black,scale = 0.3] (v9) at (-0.5,6) {};
		\node[circle,fill=black,scale = 0.3] (v6) at (-3,-1) {};
		\node[circle,fill=black,scale = 0.3] (v7) at (2,-1) {};
		\node[circle,fill=black,scale = 0.3] (v10) at (-4,4) {};
		\node[circle,fill=black,scale = 0.3] (v8) at (3,4) {};
		\node[circle,fill=black,scale = 0.3] (v14) at (-4,7) {};
		\node[circle,fill=black,scale = 0.3] (v15) at (3,7) {};
		\node[circle,fill=black,scale = 0.3] (v11) at (-6,0.5) {};
		\node[circle,fill=black,scale = 0.3] (v13) at (5,0.5) {};
		\node[circle,fill=black,scale = 0.3] (v12) at (-0.5,-3.5) {};
		\draw[->,blue]  (v1) edge (v2);
		\draw[->,blue]  (v2) edge (v3);
		\draw[->,blue]  (v3) edge (v4);
		\draw[->,blue]  (v4) edge (v5);
		\draw[->,blue]  (v5) edge (v1);
		\draw[->,red]  (v6) edge (v4);
		\draw[<-,red]  (v6) edge (v5);
		\draw[->,red]  (v7) edge (v2);
		\draw[<-,red]  (v7) edge (v3);
		\draw[->,red]  (v8) edge (v5);
		\draw[<-,red]  (v8) edge (v1);
		\draw[->,red]  (v9) edge (v3);
		\draw[->,red]  (v4) edge (v9);
		\draw[->,red]  (v10) edge (v1);
		\draw[<-,red]  (v10) edge (v2);
		\draw[->,blue]  (v6) edge (v11);
		\draw[->,red]  (v11) edge (v12);
		\draw[->,blue]  (v12) edge (v6);
		\draw[<-,blue]  (v12) edge (v7);
		\draw[<-,blue] (v7) edge (v13);
		\draw[<-,red]  (v13) edge (v12);
		\draw[->,blue]  (v11) edge (v10);
		\draw[<-,red] (v11) edge (v14);
		\draw[<-,blue]  (v14) edge (v10);
		\draw[->,blue]  (v14) edge (v9);
		\draw[->,blue]  (v9) edge (v15);
		\draw[->,red]  (v15) edge (v14);
		\draw[->,blue]  (v15) edge (v8);
		\draw[->,blue]  (v8) edge (v13);
		\draw[->,red]  (v13) edge (v15);
		\end{tikzpicture}}{petersen-line-graph}{The line graph $L(P(5,2))$ of the Petersen graph.}
\end{examp}

Our aim now is to weaken the notion of a multicycle colouring enough that every vertex transitive graph will admit one, so that the weakened notion will allow us to find \pp s. This is the essence of \Tr{colour-induced-split-presentation} below.

\begin{defn} \label{def wmc}
	A graph $\Gha$ is a \defin{weak multicycle}, if it is a vertex-disjoint union of cycles and edges. A \defin{weak multicycle colouring} of a  graph $\Gha$ is a colouring $c: E(\Gha) \rightarrow \Omega$ such that the graph with vertex set $V(\Gha)$ and edge set $c^{-1}(x)$ is a weak multicycle for each $x \in \Omega$.
\end{defn}

We say that a weak multicycle colouring $c$ is \defin{\pf}, if $c^{-1}(x)$ is regular for all $x \in \Omega$. In other words, $c^{-1}(x)$ is either a disjoint union of cycles or a perfect matching for all $x$.


As we will see in the following section, every vertex transitive graph has a \pf\ weak multicycle colouring. The condition of vertex transitivity here cannot be relaxed to just regularity. Indeed, let $\Gha$ be the 3-regular graph in Figure~\ref{figReg}. Since its vertex degrees are odd, one of the colours in any weak multicycle colouring must induce a perfect matching. But $\Gha$ does not have a perfect matching $M$, because removing $v$ and the vertex matched to $v$ by $M$ results in at least one component with an odd number of vertices. 

	\showFigTikz{
\begin{tikzpicture}[scale = 0.4]
		\node (G-00) at (0,0) [circle,fill=black,scale = 0.3, label=right:$v$] {};
		\node (G-11) at (1,-1) [circle,fill=black,scale = 0.3] {};
		\node (G-12) at (3,-1) [circle,fill=black,scale = 0.3] {};
		\node (G-13) at (1,-3) [circle,fill=black,scale = 0.3] {};
		\node (G-14) at (4,-2) [circle,fill=black,scale = 0.3] {};
		\node (G-15) at (3,-2) [circle,fill=black,scale = 0.3] {};
		\node (G-16) at (2,-3) [circle,fill=black,scale = 0.3] {};
		\node (G-17) at (2,-4) [circle,fill=black,scale = 0.3] {};
		\node (G-21) at (-1,-1) [circle,fill=black,scale = 0.3] {};
		\node (G-22) at (-3,-1) [circle,fill=black,scale = 0.3] {};
		\node (G-23) at (-1,-3) [circle,fill=black,scale = 0.3] {};
		\node (G-24) at (-4,-2) [circle,fill=black,scale = 0.3] {};
		\node (G-25) at (-3,-2) [circle,fill=black,scale = 0.3] {};
		\node (G-26) at (-2,-3) [circle,fill=black,scale = 0.3] {};
		\node (G-27) at (-2,-4) [circle,fill=black,scale = 0.3] {};
		\node (G-31) at (0,1.2) [circle,fill=black,scale = 0.3] {};
		\node (G-32) at (1.4,2.6) [circle,fill=black,scale = 0.3] {};
		\node (G-33) at (-1.4,2.6) [circle,fill=black,scale = 0.3] {};
		\node (G-34) at (1.4,4) [circle,fill=black,scale = 0.3] {};
		\node (G-35) at (0.7,3.3) [circle,fill=black,scale = 0.3] {};
		\node (G-36) at (-0.7,3.3) [circle,fill=black,scale = 0.3] {};
		\node (G-37) at (-1.4,4) [circle,fill=black,scale = 0.3] {};
		\draw[-] (G-11) edge (G-00);
		\draw[-] (G-21) edge (G-00);
		\draw[-] (G-31) edge (G-00);
		\draw[-] (G-11) edge (G-12); 
		\draw[-] (G-11) edge (G-13);
		\draw[-] (G-12) edge (G-14);
		\draw[-] (G-12) edge (G-15);
		\draw[-] (G-13) edge (G-16);
		\draw[-] (G-13) edge (G-17);
		\draw[-] (G-14) edge (G-15);
		\draw[-] (G-14) edge (G-17);
		\draw[-] (G-15) edge (G-16);
		\draw[-] (G-16) edge (G-17);
		\draw[-] (G-21) edge (G-22); 
		\draw[-] (G-21) edge (G-23);
		\draw[-] (G-22) edge (G-24);
		\draw[-] (G-22) edge (G-25);
		\draw[-] (G-23) edge (G-26);
		\draw[-] (G-23) edge (G-27);
		\draw[-] (G-24) edge (G-25);
		\draw[-] (G-24) edge (G-27);
		\draw[-] (G-25) edge (G-26);
		\draw[-] (G-26) edge (G-27);
		\draw[-] (G-31) edge (G-32); 
		\draw[-] (G-31) edge (G-33);
		\draw[-] (G-32) edge (G-34);
		\draw[-] (G-32) edge (G-35);
		\draw[-] (G-33) edge (G-36);
		\draw[-] (G-33) edge (G-37);
		\draw[-] (G-34) edge (G-35);
		\draw[-] (G-34) edge (G-37);
		\draw[-] (G-35) edge (G-36);
		\draw[-] (G-36) edge (G-37);
\end{tikzpicture}}{figReg}{A regular graph with no \pf\ weak multicycle colouring.}

\comment{, as the following example demonstrates.

\begin{examp} \label{no-weak-multicycle-colouring}
	Let $X = \{(1,2), (1,4), (2,3), (2,7), (3,4), (3,6),(4,5),(5,6),(5,7),(6,7)\}$, and define the graph $\Gha$ as follows
	\begin{align*}
	V(\Gha) = & \{v, v_{i,j,k} \vert i,k \in \bZ/3\bZ, \ j \in \{1,2,3,4,5,6,7\} \}, \mbox{ and}\\
	E(\Gha) = & \{(v, v_{i,1,k}), (v_{i,a,k}, v_{i,b,k'}), (v_{i,1,k}, v_{i,1,k+1}) \vert i,k,k' \in \bZ/3\bZ, \ (a,b) \in X\}.
	\end{align*}
	Define the subgraphs $\Ghb_i$ to be the subgraph induced on $\{v_{i,j,i} \vert j \in \{1,2,3,4,5,6,7\}, \ k \in \bZ/3\bZ\}$ then one can visualise the graph as follows.
	\showFigTikz{
		\begin{tikzpicture}[scale = 0.4]
		\node (G-00) at (5,5) [circle,fill=black,scale = 0.3,label=left:$v$] {};
		\node (G-11) at (3,3) [circle,fill=black,scale = 0.3] {};
		\node (G-12) at (3,2) [circle,fill=black,scale = 0.3] {};
		\node (G-13) at (2,3) [circle,fill=black,scale = 0.3] {};
		\node (G-21) at (7,3) [circle,fill=black,scale = 0.3] {};
		\node (G-22) at (7,2) [circle,fill=black,scale = 0.3] {};
		\node (G-23) at (8,3) [circle,fill=black,scale = 0.3] {};
		\node (G-31) at (5,8) [circle,fill=black,scale = 0.3] {};
		\node (G-32) at (4,9) [circle,fill=black,scale = 0.3] {};
		\node (G-33) at (6,9) [circle,fill=black,scale = 0.3] {};
		\node (G-14) at (1.75,1.75) [circle,scale = 1] {$\Ghb_0$};
		\node (G-24) at (8.25,1.75) [circle,scale = 1] {$\Ghb_1$};
		\node (G-34) at (5,9.75) [circle,scale = 1] {$\Ghb_2$};
		\draw (G-14) circle (2cm);
		\draw (G-24) circle (2cm);
		\draw (G-34) circle (2cm);
		\draw[-] (G-11) edge (G-00);
		\draw[-] (G-12) edge (G-00);
		\draw[-] (G-13) edge (G-00);
		\draw[-] (G-11) edge (G-12);
		\draw[-] (G-12) edge (G-13);
		\draw[-] (G-13) edge (G-11);
		\draw[-] (G-21) edge (G-00);
		\draw[-] (G-22) edge (G-00);
		\draw[-] (G-23) edge (G-00);
		\draw[-] (G-21) edge (G-22);
		\draw[-] (G-22) edge (G-23);
		\draw[-] (G-23) edge (G-21);
		\draw[-] (G-31) edge (G-00);
		\draw[-] (G-32) edge (G-00);
		\draw[-] (G-33) edge (G-00);
		\draw[-] (G-31) edge (G-32);
		\draw[-] (G-32) edge (G-33);
		\draw[-] (G-33) edge (G-31);
		\end{tikzpicture} \hspace{0.2 in}
		\begin{tikzpicture}[scale = 0.6]
		\node (G-00) at (-1,5) [circle,fill=black,scale = 0.3,label=left:$v$] {};
		\node (G-11) at (1,5) [circle,fill=black,scale = 0.3] {};
		\node (G-12) at (2,5.5) [circle,fill=black,scale = 0.3] {};
		\node (G-13) at (2,4.5) [circle,fill=black,scale = 0.3] {};
		\node (G-21) at (5,8) [circle,draw = black] {$v_{i,2,k}$};
		\node (G-31) at (9,5) [circle,draw = black] {$v_{i,3,k}$};
		\node (G-41) at (5,2) [circle,draw = black] {$v_{i,4,k}$};
		\node (G-51) at (5,4) [circle,draw = black] {$v_{i,5,k}$};
		\node (G-61) at (7,5) [circle,draw = black] {$v_{i,6,k}$};
		\node (G-71) at (5,6) [circle,draw = black] {$v_{i,7,k}$};
		\node (G-10) at (3.3,5) {$v_{i,1,k}$};
		\node (G-10) at (2,7) {$\Ghb_i$};
		\draw (1.6,5) circle (0.8cm);
		\draw (5.4,5) circle (4.7cm);
		\draw[-] (G-11) edge (G-00);
		\draw[-] (G-12) edge (G-00);
		\draw[-] (G-13) edge (G-00);
		\draw[-] (G-11) edge (G-12);
		\draw[-] (G-12) edge (G-13);
		\draw[-] (G-13) edge (G-11);
		\draw[dashed] (G-11) edge (G-21);
		\draw[dashed] (G-12) edge (G-21);
		\draw[dashed] (G-13) edge (G-21);
		\draw[dashed] (G-11) edge (G-41);
		\draw[dashed] (G-12) edge (G-41);
		\draw[dashed] (G-13) edge (G-41);
		\draw[dashed,very thick] (G-21) edge (G-31);
		\draw[dashed,very thick] (G-21) edge (G-71);
		\draw[dashed,very thick] (G-31) edge (G-41);
		\draw[dashed,very thick] (G-31) edge (G-61);
		\draw[dashed,very thick] (G-41) edge (G-51);
		\draw[dashed,very thick] (G-51) edge (G-61);
		\draw[dashed,very thick] (G-51) edge (G-71);
		\draw[dashed,very thick] (G-61) edge (G-71);
		\end{tikzpicture}}{regular-counter-example}{Counter example to all regular graphs having a \pf\ weak multicycle colouring}
	Where each circle labeled $v_{i,j,k}$ represents 3 vertices and all the dashed lines represent all connected edges between them. One can check this is a 9-regular graph, yet the removal of the vertex $v$ seperates the graph into 3 components $\Ghb_i$ each with an odd number of vertices. From Tutte's Theorem \cite{BM76} we know this means it doesn't contain a perfect matching, therefore no \pf\ multicycle colouring of $\Gha$ exists. \mymargin{????}
\end{examp}
}

\subsection{Multicycle colourings and \pp s}

We say a \pp\ $P = \langle X \vert \sU \vert \sI \vert \phi \vert \sR \rangle$ is \defin{uniform}, if for every $s \in \sS$, all orbits of $\phi(s)$ have  the same size. In other words, if $c$ is a multicycle colouring on $C(P)$. In light of Leighton's aforementioned conjecture, one can ask the following:

\begin{que}
	Let $\Gha$ be a vertex transitive graph. Does $\Gha$ have a multi-cycle colouring if and only if it is the \pcg\ of a uniform \pp?
\end{que}

The forward direction is true: if $\Gha$ has a multicycle colouring then it has a uniform \pp\ given in the proof of Theorem \ref{colour-induced-split-presentation}. 
But the backward direction is false, as shown by the following example.  	Consider the \bpp\ $P = \langle \{a\}, \{b\}, \emptyset \vert \{a\}, \{a^2\} \rangle$. This is trivially uniform, like every \bpp. However, $\Sp(P)$, shown in Figure~\ref{multi-cycle-counter-example},  does not have a multicycle colouring.
	\showFigTikz{
		\begin{tikzpicture}[scale=2]
		\node[circle,fill=black,scale = 0.3] (v1) at (0,0) {};
		\node[circle,fill=black,scale = 0.3] (v2) at (1,0) {};
		\node[circle,fill=black,scale = 0.3] (v3) at (2,0) {};
		\node[circle,fill=black,scale = 0.3] (v4) at (3,0) {};
		\draw[->,blue]  (v1) edge [bend left] (v2);
		\draw[->,blue]  (v2) edge [bend left] (v1);
		\draw[->,blue]  (v3) edge [bend left] (v4);
		\draw[->,blue]  (v4) edge [bend left] (v3);
		\draw[->,red]  (v2) edge [bend left] (v3);
		\draw[->,red]  (v3) edge [bend left] (v2);
		\draw[->,red] (v1)  edge [loop left] (v1);
		\draw[<-,red] (v4) edge [loop right] (v4);
		\end{tikzpicture}}{multi-cycle-counter-example}{$\Sp\langle \{a\}, \{b\}, \emptyset \vert \{a\}, \{a^2\} \rangle$}

The following result will be used later to show that every vertex transitive graph admits a \pp.
 
\begin{thm} \label{colour-induced-split-presentation}
A connected graph has a \pf\ weak multicycle colouring if and only if it admits a \pp.
\end{thm}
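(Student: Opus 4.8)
The plan is to prove the two implications separately; ``\pp\ $\Rightarrow$ \pf\ weak multicycle colouring'' is routine, while the converse carries the content.

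For the easy direction, suppose $\Gha\cong\Sp(P)$ for a \pp\ $P=\langle X\vert\sU\vert\sI\vert\phi\vert\sR\rangle$. I would recall that $\Sp(P)$ carries its Cayley-like colouring $\tilde c$, is $\vert\sS\cup\sS^{-1}\vert$-regular, and has exactly one out-edge of each colour at every vertex --- these being the analogues for general \pp s of \Prr{reg} and \Cr{part} indicated after \Dr{def SpP}. Let $\bar c\colon E(\Gha)\to\sS$ be the colouring of undirected edges obtained by identifying $s$ with $s^{-1}$. For $s\in\sU$ we have $s\ne s^{-1}$ in $MF_P$, so at each vertex the out-edges of colours $s$ and $s^{-1}$ give two $\bar c$-edges of colour $s$; hence $\bar c^{-1}(s)$ is $2$-regular, i.e.\ a disjoint union of cycles --- where in the infinite case a ``cycle'' may be a double ray, just as for an infinite-order generator of a Cayley graph. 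For $s\in\sI$, $\phi(s)$ is a fixed-point-free involution, so the $s$-coloured edges of $C(P)$ form the perfect matching $\{\{x,\phi(s)(x)\}:x\in X\}$ on $X$; as $\epsilon\colon\Sp(P)\to C(P)$ is a covering map and $\bar c=c\circ\epsilon$, its preimage $\bar c^{-1}(s)$ is again a perfect matching. Thus $\bar c$ is a \pf\ weak multicycle colouring.

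For the converse, let $\Gha$ be connected with a \pf\ weak multicycle colouring $c\colon E(\Gha)\to\Omega$. I would build a \pp\ directly. Put $X:=V(\Gha)$, $\sS:=\Omega$, and split $\Omega=\sU\sqcup\sI$, where $\sI$ consists of the colours whose class is a perfect matching and $\sU$ of the colours whose class is a disjoint union of (possibly infinite) cycles. For $s\in\sU$ fix an orientation of $c^{-1}(s)$ making every component a directed cycle; then ``follow the $s$-edge forward'' is a permutation $\phi(s)$ of $V(\Gha)$. For $s\in\sI$ let $\phi(s)$ be the involution of $V(\Gha)$ swapping the endpoints of each edge of $c^{-1}(s)$; it is fixed-point-free. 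With $MF_P:=\langle\sS\vert\{s^2:s\in\sI\}\rangle$ acting on $X$ through $\phi$, connectedness of $\Gha$ (read colours, with signs, along a path between two given vertices) makes the action transitive, so \eqref{phi II} and \eqref{phi I} of \Dr{def GSP} hold; for relators take $\sR_x:=Stab(MF_P,x)$ for every $x\in X$.

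It remains to check $\Sp(P)\cong\Gha$, in two steps. First, the presentation graph $C(P)$ --- vertices $X=V(\Gha)$, one edge $(x,\phi(s)(x))$ for each $x$ and $s\in\sU\cup\sU^{-1}\cup\sI$ --- is isomorphic to $\Gha$ as a coloured graph: this is essentially a relabelling, matching $(x,\phi(s)(x))$ with the oriented $s$-coloured edge of $\Gha$ leaving $x$ (for $s\in\sU$, and its reverse for $s^{-1}$), respectively with the matching edge at $x$ (for $s\in\sI$), and one checks this is a colour-preserving edge-bijection commuting with inversion. Second, I would use the quotient description $\Sp(P)=T_P/\sim$ from just after \Dr{def SpP}: since $\sR_x=Stab(MF_P,x)=W_{x,x}$, the subgroup $R_x$ equals $W_{x,x}$ for every $x$ (it is always contained in $W_{x,x}$, and contains $\sR_x$ by taking $w=1$). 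Hence any two elements of $\tilde V_x$ are $\sim$-equivalent, so each class $\tilde V_x$ collapses to a single vertex; thus $V(\Sp(P))$ is in bijection with $X$, the projection of an edge $(w,ws)$ of $T_P$ depends only on the class of $w$ and on $s$, and $\Sp(P)$ reduces to exactly $C(P)$. Combining the two steps, $\Sp(P)\cong C(P)\cong\Gha$. (Alternatively: $\Sp(P)$ covers $C(P)\cong\Gha$ with subgroup $R_x=W_{x,x}=\pi_1(C(P),x)$ by the analogue of \Cr{pi-one}, so the covering is trivial.)

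The main obstacle I expect is the orientation bookkeeping in the converse: choosing the orientations of the classes $c^{-1}(s)$ for $s\in\sU$ and then checking carefully that the graph $C(P)$ rebuilt from the permutations $\phi(s)$ really is $\Gha$ with its original colouring, with edge-inversion matching the colour-inversion $s\leftrightarrow s^{-1}$. There is also the minor but necessary point of agreeing what ``cycle'' means in the infinite case, so that $2$-regular colour classes count as weak multicycles. The covering-space content is then supplied as a black box by the earlier sections.
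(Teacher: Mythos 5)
Your proposal is correct and follows essentially the same route as the paper: in the substantive direction you take $X=V(\Gha)$, turn each colour class into a permutation via a chosen orientation, and declare every closed walk a relator (your $\sR_x=Stab(MF_P,x)$ coincides with the paper's $\sW_x(\pi_1(\Gha,x))$), so that $C(P)\cong\Gha$ and the quotient/universal cover collapses to $C(P)$; the easy direction is the same pullback of the colouring along $\epsilon$. The only difference is cosmetic — you verify the collapse via the quotient description $T_P/\sim$ where the paper notes that $\sC(P)$ is simply connected — and your flagged concern about double rays as infinite ``cycles'' is handled the same way in the paper.
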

\begin{proof}
	Recall that a graph is defined using a directed edge set $\overrightarrow{E}(\Gha)$, but we can also consider the undirected edge set $E(\Gha) = \overrightarrow{E}(\Gha)/\,^{-1}$, so that an undirected edge is a pair $\{e,d\}$ such that $e^{-1} = d$ and $d^{-1} = e$. In the following proof we have to transition between colourings of the directed edges and colourings of the undirected edges. Apart from this, the proof boils down to a straightforward checking of the conditions of the corresponding definitions.

\medskip
	For the forward direction, suppose $\Gha$ is connected and it has a \pf\ weak multicycle colouring $c: E(\Gha) \rightarrow \Omega$. To define the desired \pp\ $P$, we start with
	\begin{itemize}
		\item $X = V(\Gha)$,
		\item $\sU = \{\omega \in \Omega \vert c^{-1}(\omega)$ is of degree 2$\}$, and
		\item $\sI = \{\omega \in \Omega \vert c^{-1}(\omega)$ is of degree 1$\}$.
	\end{itemize}
	Since $c$ is \pf, we have $\sU \cup \sI = \Omega$.
	We want to refine $c$ into a colouring  $c'$ of the directed edges  of $\Gha$. To do this, for each $\omega \in \sU$ we choose an orientation  $O_{\omega}\subset \overrightarrow{E}(\Gha) $ of $c^{-1}(\omega)\} \subset {E}(\Gha)$ (recall this means that $(O_{\omega} \cup O_{\omega}^{-1})/\,^{-1} = c^{-1}(\omega)$ and $O_{\omega} \cap O_{\omega}^{-1} = \emptyset$). Since $c^{-1}(\omega)$ is a multicycle, we can choose  $O_{\omega}$ so that each of its cycles is oriented, that is, for each vertex $v\in V(\Gha)$ there is exactly one $e\in O_{\omega}$ with $\tau(e)=x$. Thus $O_{\omega}$ defines a permutation $\phi(\omega)$ of $X = V(\Gha)$, by letting $\phi(\omega)(x)$ be the unique $y\in X$ such that $(x,y)\in O_{\omega}$. Moreover, for each $\omega \in \sI$, let $O_{\omega} = \{e \in \overrightarrow{E}(\Gha) \vert [e] \in c^{-1}(\omega)\}$, and let $\phi(\omega)$ be the involution of $V(\Gha)$ exchanging the end-vertices of each edge in $c^{-1}(\omega)$. Thus $\phi$ satisfies \eqref{phi I} of \Dr{def GSP} by construction (we will check \eqref{phi II} below).
	
	We now define $c'$ by
	\[
	c'(e) = \begin{cases} c([e]) & \mbox{ if } e \in O_{c([e])}\\ c([e])^{-1} & \mbox{ otherwise. } \end{cases}
	\] 
	This maps $E(\Gha)$ to $ \sU \cup \sU^{-1} \cup \sI$, because for $e \in \overrightarrow{E}(\Gha)$ such that $c([e]) \in \sI$ we have $e,e^{-1} \in O_{c([e^{-1}])}$ by definition. Easily, $c'$ is a Cayley-like colouring. This allows us to define $\sW_v$ on $\Gha$ as described after \Dr{def Cl}. Note that as $\Gha$ is connected, for any two $x,y \in V(\Gha)$ there is a path $p$ connecting $x$ and $y$. Then the path $p$ corresponds to a word $\sW_x(p) \in MF_P$ such that $\phi(\sW_x(p))(x) = y$. Therefore the action of $MF_P$  on $X = V(\Gha)$ is transitive as required by \eqref{phi II} of \Dr{def GSP}.
	
	To complete  the definition of our \pp\ $P$, we choose the relators
	\begin{itemize}
		\item $\sR_v = \sW_v(\pi_1(\Gha,v)) \subset MF_P$.
	\end{itemize}
	We claim that $\Gha$ coincides with the presentation graph $C(P)$. To begin with, they have the same vertex set $V(C) = X = V(\Gha)$. Moreover,
	\begin{align*}
	\overrightarrow{E}(C(P)) & = \{(x,\phi(\omega)(x)) \vert x \in V(\Gha), \ \omega \in \sU \cup \sU^{-1} \cup \sI\}\\
	& = \cup_{\omega \in \Omega} \{(x,y) \vert (x,y) \in O_\omega \mbox{ or } (y,x) \in O_\omega\}\\ 
	& = \cup_{\omega \in \Omega} (c')^{-1}(\omega) = \overrightarrow{E}(\Gha)
	\end{align*}
	and so our claim is proved.
	
	As we defined $\sC(P)$ by glueing in a 2-cell along each closed walk dictated by an element of $\sR_v, v\in V(C(P))$, where we have chosen $\sR_v = \sW_v(\pi_1(\Gha,v))$, we have forced $\pi_1(\sC(P),v)$ to be trivial. Therefore,  $\sC(P)$ coincides with its own universal cover $\widehat{\sC(P)}$. Thus $\Sp(P)$, defined as the 1-skeleton of $\widehat{\sC(P)}$, is $C(P) = \Gha$. Therefore $P$ is a \pp\ for $\Gha$. 
	
\medskip	
	For the converse direction, let $\Gha = \Sp(P)$ for some \pp\ $P$. Let $\epsilon : \Gha \rightarrow C(P)$ be the  covering map, and $c_C': \overrightarrow{E}(C) \rightarrow \sU \cup \sU^{-1} \cup \sI$ the colouring induced by the generators of $P$, as in the definition of $\Sp(P)$. We collapse $c_C'$ into a colouring $c_C$ of the undirected edges of $C$ defined by
	\[
	c_C([e]) = \begin{cases} u \in \sU & \mbox{ if } c({e}) \in \{u, u^{-1}\} \\
			 i \in \sI & \mbox{ if } c({e}) = i. 
			 \end{cases}
	\]
	We can collapse  $c_{\Gha}': \overrightarrow{E}(\Gha) \rightarrow \sU \cup \sU^{-1} \cup \sI$ similarly to obtain an undirected colouring $c_{\Gha}: E(\Gha) \rightarrow \sU \cup \sI$. Note that $c_{C}$ is a \pf\ weak multicycle colouring, with $c^{-1}(i)$ being of degree  1 for $i \in \sI$ and $c^{-1}(u)$ being of degree 2 for $u \in \sU$, by the  definitions. As $c_{C}' \circ \epsilon = c_{\Gha}'$, it is easy to verify that $c_C \circ \epsilon = c_{\Gha}$. This implies that $c_{\Gha}^{-1}(x)$ has the same degree as $c^{-1}_{C}(x)$, and that every vertex has at least one incident edge coloured $s$ for each $s \in \sI \cup \sU$. This means that $c_{\Gha}$ is a \pf\ weak multicycle colouring of $\Gha$ as claimed. 
\end{proof}

\subsection{Weak multicycle colourings of vertex transitive graphs} \label{sec mc trans}

The aim of this section is to show that every  vertex transitive graph $\Gha$ has a \pf\ weak multicycle colouring, hence it admits a \pp\ by \Tr{colour-induced-split-presentation}.

For this, we will use the following result of Godsil and Royle \cite[Theorem 3.5.1]{GR01}:

\begin{thm}[{Godsil \& Royle \cite[Theorem 3.5.1]{GR01}}] \label{vertex-transitive-matching}
	 Let $\Gha$ be a connected finite vertex transitive graph. Then $\Gha$ has a matching that misses at most one vertex.
\end{thm}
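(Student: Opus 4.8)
The plan is to reduce the statement to the Gallai--Edmonds structure theory of maximum matchings, using vertex transitivity to make that structure degenerate. If $\Gha$ has a perfect matching there is nothing to prove, so suppose it does not and fix a maximum matching $M$; then $M$ misses at least one vertex, say $u$. The key observation is that vertex transitivity forces \emph{every} vertex of $\Gha$ to be missed by some maximum matching: given any $w \in V(\Gha)$, choose $\sigma \in \Aut(\Gha)$ with $\sigma(u) = w$, and note that $\sigma(M)$ is again a matching of the same cardinality, hence maximum, and it misses $w$. Writing $\nu$ for the matching number, this says $\nu(\Gha - w) = \nu(\Gha)$ for every $w \in V(\Gha)$.

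In the language of the Gallai--Edmonds decomposition, the set $D(\Gha)$ of vertices avoided by some maximum matching is therefore all of $V(\Gha)$, whence $A(\Gha) = N(D(\Gha)) \setminus D(\Gha) = \emptyset$ and $C(\Gha) = V(\Gha) \setminus (D(\Gha) \cup A(\Gha)) = \emptyset$. Now I invoke Gallai's lemma (equivalently, the first clause of the Gallai--Edmonds theorem): every connected component of $\Gha[D(\Gha)]$ is factor-critical, i.e.\ deleting any single vertex from it leaves a graph with a perfect matching. Since $\Gha$ is connected and $D(\Gha) = V(\Gha)$, the graph $\Gha$ is itself a single such component, hence factor-critical. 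In particular $|V(\Gha)|$ is odd and every maximum matching of $\Gha$ misses exactly one vertex, which is the assertion.

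The only non-elementary ingredient is Gallai's lemma, and I expect that to be the crux; if a self-contained argument is wanted it is proved by induction on $|V(\Gha)|$, picking two vertices at distance $2$ together with a common neighbour $w$ and applying the inductive hypothesis to $\Gha - w$. An alternative route bypassing Gallai--Edmonds uses the Tutte--Berge formula $|V(\Gha)| - 2\nu(\Gha) = \max_{U \subseteq V(\Gha)} \bigl( o(\Gha - U) - |U| \bigr)$, where $o(\cdot)$ counts components of odd order: one argues that for a vertex-transitive graph the maximum is attained at $U = \emptyset$ (using transitivity to control how the odd components of $\Gha - U$ intersect the orbit of a fixed vertex), so that the deficiency equals $o(\Gha) \le 1$ since $\Gha$ is connected. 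I would present the Gallai--Edmonds version as the cleaner of the two.
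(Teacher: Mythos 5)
Your argument is correct. Note, however, that the paper does not actually prove this statement: it is quoted verbatim from Godsil \& Royle, and the only proof in the paper belonging to this circle of ideas is the Appendix's proof of the infinite analogue (Theorem \ref{inf-vertex-matching}). That proof, like the original Godsil--Royle argument, is the elementary one: it works directly with symmetric differences of maximum matchings, shows their components are alternating even paths and cycles (Lemma \ref{matching-structure}), and derives from this that two non-critical vertices joined by a path of non-critical vertices cannot both be missed by one maximum matching (Lemma \ref{path-critical-missing}) --- which, combined with transitivity making every vertex non-critical, is exactly Gallai's lemma proved from scratch. You instead import Gallai's lemma (equivalently the factor-critical clause of Gallai--Edmonds) as a black box: transitivity gives $\nu(\Gha - w)=\nu(\Gha)$ for all $w$, hence $D(\Gha)=V(\Gha)$, hence $\Gha$ is a single factor-critical component and every maximum matching misses exactly one vertex. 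That is a clean and valid shortcut for the finite case; what the elementary route buys is self-containedness and, more importantly for this paper, the ability to survive the passage to infinite graphs, where the Gallai--Edmonds machinery is not directly available and the Appendix must instead set up miss sequences, Zorn's lemma and a compactness argument. Your Tutte--Berge alternative is only sketched --- the claim that the maximum deficiency is attained at $U=\emptyset$ is precisely the content that needs proving --- but since you do not rely on it, this does not affect the correctness of your main argument.
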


In the Appendix we generalise this to infinite vertex transitive graphs as follows

\paragraph{\textbf{Theorem \ref{inf-vertex-matching}}}
	Let $\Gha$ be a countably infinite, connected, vertex transitive graph. Then $\Gha$ has a perfect matching.\\

In passing, let us mention the following still open conjecture. If true, it would imply that all finite vertex transitive cubic graphs have a uniform \pp. 

\begin{conj}[{Lovasz \cite[Problem 11]{LL70}}]
	 Let $\Gha$ be a finite cubic vertex transitive graph. Then there exists a perfect matching $M$ in $\Gha$ such that $\Gha$ $\backslash$ $M$ consists of either one cycle, (and $\Gha$ is Hamiltonian), or of two disjoint cycles of the same length.
\end{conj}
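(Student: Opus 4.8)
The plan is to dispose of the easy existence question first and then reduce the statement to the Hamiltonicity of $\Gha$. Since $\Gha$ is finite, connected, cubic and vertex transitive, it is bridgeless: a transitive graph possessing a bridge would, by transitivity, have every edge a bridge and hence be a tree, which a finite cubic graph is not. By Petersen's theorem every bridgeless cubic graph has a perfect matching $M$, and the complement $\Gha \setminus M$ is then automatically a $2$-regular spanning subgraph, i.e.\ a $2$-factor. So the content of the conjecture is not the existence of $M$ but the \emph{shape} of its complementary $2$-factor. The crucial observation is that the one-cycle alternative is exactly Hamiltonicity: a Hamiltonian cycle $H$ of the cubic graph $\Gha$ has $\Gha \setminus E(H)$ a perfect matching, and conversely a perfect matching whose complement is a single cycle exhibits that cycle as Hamiltonian. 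Hence the conjecture reads: \emph{every finite cubic vertex transitive graph is either Hamiltonian, or carries a $2$-factor consisting of two cycles of equal length.}

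This reformulation splits the work into two tasks. First, I would show that all but finitely many cubic vertex transitive graphs are Hamiltonian, i.e.\ settle the cubic case of Lov\'asz's Hamiltonicity conjecture with an explicit finite list of exceptions. The conjectured complete list of connected non-Hamiltonian vertex transitive graphs consists of $K_2$, the Petersen graph $P(5,2)$, the Coxeter graph, and the two graphs obtained from $P(5,2)$ and the Coxeter graph by truncating each vertex to a triangle; all four nontrivial members are cubic. Second, for each graph on this finite exception list I would exhibit by hand a $2$-factor made of two equal cycles. For $P(5,2)$ this is immediate: the five spokes form a perfect matching $M$, and $\Gha \setminus M$ is the outer $5$-cycle together with the inner pentagram, i.e.\ two cycles of length $5$. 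The analogous verification for the Coxeter graph and the two truncations is a finite check, which I expect to succeed since each of these graphs carries a manifest two-orbit structure on a suitable $2$-factor.

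The main obstacle is the first task, which is precisely the (open) Hamiltonicity problem for cubic vertex transitive graphs and is almost certainly out of reach of elementary methods. A realistic line of attack separates the Cayley and non-Cayley cases. For cubic \Cg s one can push the substantial existing literature: such a graph has a generating set of the form $\{a,a^{-1},b\}$ with $b^2=1$, or $\{a,b,c\}$ with $a^2=b^2=c^2=1$, and Hamiltonicity is known in many of these regimes (work of Witte and Gallian and their successors), so that only genuinely hard residual cases remain. For the non-Cayley cubic vertex transitive graphs one would try to exploit quasiprimitivity of the automorphism action together with normal-quotient reductions in the style of Praeger to cut down to base cases. In either branch the difficulty is global rather than local: vertex transitivity pins down the neighbourhood structure but gives no direct handle on long cycles, which is exactly why the problem has stayed open.

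Finally, I would exploit the link to the machinery of this paper, which reframes the two-cycle alternative. Two-colour the edges of $\Gha$ by giving the matching $M$ one colour and the complementary $2$-factor the other; because that $2$-factor consists of cycles of a single common length, this is a multicycle colouring, so by the forward direction of \Tr{colour-induced-split-presentation} it yields a uniform \pp\ of $\Gha$ --- this is precisely the implication flagged in the remark preceding the conjecture. Conversely, the sought matching can be phrased as a uniform \pp\ built from one cycle-type generator $a$ (with a single orbit in the Hamiltonian case, or two equal orbits otherwise) and one matching involution $b\in\sI$. For the two-cycle alternative this is the generalized-Petersen picture realized by $P(5,2)$ in \Er{ex Pet 1}, where $a$ has two orbits of length $5$ and $b$ is the spoke involution. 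Recasting the two-cycle case as the construction of such a presentation is the angle I would pursue once the finite exception list has been isolated, since it concentrates the entire problem into locating a single cyclic generator with exactly two equal orbits.
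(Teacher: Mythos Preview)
This statement is not a theorem but an open conjecture: the paper explicitly introduces it with ``let us mention the following still open conjecture'' and offers no proof. So there is no argument in the paper to compare your proposal against.

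Your proposal is not a proof either, as you yourself recognise. You correctly reduce the statement to: every finite connected cubic vertex transitive graph is Hamiltonian, or else admits a $2$-factor consisting of two equal cycles. But your plan for the first alternative is precisely the Lov\'asz Hamiltonicity conjecture for vertex transitive graphs (restricted to the cubic case), which you note is open. Verifying the second alternative on the conjectural finite list of exceptions is fine as far as it goes, but it presupposes that this list is complete, which is again the open problem. So the genuine gap is exactly where you place it: the reduction is sound, but the target is an unresolved conjecture, and nothing in your outline circumvents that.
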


The following old theorem of Petersen is a rather straightforward application of Hall's Marriage theorem \cite{PH35}. Although it is well-known, we include a proof for convenience.

\begin{thm}[{J.~Petersen \cite{Petersen}}] \label{2-regular-sub-graph}
	 Every regular graph of positive (finite and) even degree has a spanning 2-regular subgraph.
\end{thm}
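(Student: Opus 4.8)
The statement to prove is the classical theorem of Petersen: every regular graph of positive finite even degree, say $2k$, has a spanning $2$-regular subgraph. Here is my plan.

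\textbf{Approach via Eulerian orientation and bipartite double cover.} First I would reduce to a single connected component, so assume $\Gha$ is connected and $2k$-regular with $k \geq 1$. Since every vertex has even degree, $\Gha$ admits a closed Eulerian walk; traversing it orients each edge so that at every vertex the in-degree equals the out-degree, namely $k$ each. Now I would build an auxiliary bipartite graph $H$ with parts $V^+ = \{v^+ : v \in V(\Gha)\}$ and $V^- = \{v^- : v \in V(\Gha)\}$, placing an edge between $u^+$ and $v^-$ whenever the chosen orientation has an arc from $u$ to $v$. By construction $H$ is $k$-regular and bipartite.

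\textbf{Perfect matching in $H$ via Hall.} A $k$-regular bipartite graph with $k \geq 1$ satisfies Hall's condition: for any $S \subseteq V^+$, the number of edges leaving $S$ is $k|S|$, and each vertex of $N(S)$ receives at most $k$ of them, so $|N(S)| \geq |S|$. (In the infinite case one must be a little careful, but the statement as quoted concerns graphs whose degree is finite; if the paper intends only finite graphs here this is immediate from Hall's Marriage theorem, which is cited, and if infinite graphs are allowed one invokes the defect/infinite version of Hall for locally finite bipartite graphs. I would state the finite case cleanly and remark on the locally finite case.) Let $M$ be a perfect matching of $H$. Translate $M$ back to $\Gha$: each vertex $v$ is matched as $v^+$ to exactly one $w^-$ and as $v^-$ to exactly one $u^+$. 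Define $F \subseteq E(\Gha)$ to be the set of edges $uv$ such that the arc $u \to v$ was used by $M$ (i.e. $u^+w^-$ with $w=v$).

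\textbf{Checking $F$ is spanning $2$-regular.} At each vertex $v$, exactly one matching edge of $M$ has the form $v^+ \cdot$ (contributing one out-arc at $v$, hence one edge of $F$ at $v$) and exactly one has the form $\cdot\, v^-$ (contributing one in-arc at $v$, hence another edge of $F$ at $v$). These two edges are distinct unless they come from a single edge of $\Gha$ traversed in both directions, which cannot happen since our graph has no such structure in the Eulerian orientation — or, to be safe, if $\Gha$ has multiple edges this is still fine as they are distinct edges; if a loop is present it already contributes $2$ to the degree and can be handled directly. Hence every vertex has degree exactly $2$ in the spanning subgraph $(V(\Gha), F)$, which is therefore a disjoint union of cycles, i.e. a spanning $2$-regular subgraph.

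\textbf{Main obstacle.} The genuine content is verifying Hall's condition and the bookkeeping that the matching of the bipartite double cover pulls back to a subgraph in which \emph{each vertex gets exactly two incident edges, not one edge counted twice}; the edge-versus-arc translation in the formalism of this paper (directed edges with the involution $\,^{-1}$) needs a careful but routine check. For loops and parallel edges one should note loops can simply be discarded at the start (a loop contributes $2$ to the degree, so deleting all loops leaves an even-regular graph of possibly smaller degree, and if that degree is $0$ the loops themselves, two per... — actually cleaner: handle loops by just keeping one and noting it is already $2$-regular at that vertex, or assume simple graphs as is standard for this classical statement). I expect the Hall verification to be the only step with any mathematical substance, and it is short.
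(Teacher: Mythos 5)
Your proof is correct and follows essentially the same route as the paper's: an Eulerian (balanced) orientation, the auxiliary $k$-regular bipartite graph on $V^+\cup V^-$, Hall's Marriage theorem to get a perfect matching, and pulling the matching back to a spanning $2$-regular subgraph. The extra care you take with loops, parallel edges, and the locally finite version of Hall only makes explicit points the paper leaves implicit (it handles the infinite case by choosing a balanced orientation greedily and applying Hall to finite subsets of $V^+$).
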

\begin{proof}
	Let $\Gha$ be a 2$k$-regular graph. If $\Gha$ is finite then it contains an Euler tour $C$ (i.e.\ a closed walk that uses each edge exactly once) by Euler's theorem \cite{DiestelBook05}. Pick an orientation of $O_C \subset \overrightarrow{E}(\Gha)$ of $C$. If $\Gha$ is infinite then just choose an orientation with equal in and out degree, which can be constructed greedily. Then construct an auxiliary graph $\Ghb$ with
	\begin{align*}
	V(\Ghb) = & \{v^+, v^i \vert v \in V(\Gha)\}, \ \mbox{and}\\
	E(\Ghb) = & \{(v^+, u^i) \vert (v,u) \in O_C\}.
	\end{align*}
	By definition, $\Ghb$ is $k$-regular and bipartite, with bipartition $V^+ = \{v^+ \vert v \in V(\Gha)\}$ and $V^- = \{v^- \vert v \in V(\Gha)\}$. For any finite $A \subset V^+$, as $\Ghb$ is $k$-regular, the neighbourhood $N(A) = \{ u^- \vert (v^+, u^-) \in E(\Ghb) \ \mbox{with} \ v^+ \in A\}$ of $A$ has size at least $k \times \vert A \vert / k = \vert A \vert$. So by Hall's Marriage theorem \cite{PH35}, $\Ghb$ contains a perfect matching $M \subset E(\Ghb)$. Then the spanning subgraph $S \subset \Gha$ given by $\{(v,u) \vert (v^+,u^-) \in M\} \subset E(\Gha)$ is 2-regular by construction.
\end{proof}

Combining this with Theorem \ref{vertex-transitive-matching} and Theorem \ref{inf-vertex-matching}, we now obtain 

\begin{lem} \label{sfwmc}
	Every countable, vertex transitive, graph $\Gha$ has a \pf\ weak multicycle colouring.
\end{lem}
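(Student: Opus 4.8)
The plan is to reduce the statement to the finite and infinite matching theorems already available, using a transfinite/greedy peeling of $2$-regular subgraphs. Since $\Gha$ is vertex transitive, it is $k$-regular for some $k$; moreover $\Gha$ decomposes into connected components that are all isomorphic (because $\Aut(\Gha)$ acts transitively), so it suffices to produce a colouring of one connected component $\Gha_0$ and copy it across all components via the automorphisms. Thus we may assume $\Gha$ is connected, $k$-regular, and either finite or countably infinite.

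First I would handle the case when $k$ is even. Then repeatedly apply \Tr{2-regular-sub-graph}: extract a spanning $2$-regular subgraph, colour all its edges with a fresh colour, delete those edges, and note the remaining graph is $(k-2)$-regular. (One must check that after deletion the graph is still covered by \Tr{2-regular-sub-graph}, i.e. it is $2j$-regular of finite even degree on each component; this is immediate.) After $k/2$ steps every edge has received a colour, and each colour class is a disjoint union of cycles, hence a \pf\ weak multicycle colouring.

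Next, when $k$ is odd, the idea is to first peel off one perfect matching to reduce to the even case. If $\Gha$ is finite and connected it need not have a perfect matching, but \Tr{vertex-transitive-matching} gives a matching $M$ missing at most one vertex; by vertex transitivity, $|V(\Gha)|$ odd forces the "missing one vertex'' case only when the order is odd, and a $k$-regular graph with $k$ odd has an even number of vertices, so in fact $M$ is a perfect matching. If $\Gha$ is countably infinite and connected, \Tr{inf-vertex-matching} directly supplies a perfect matching $M$. In either case, colour $M$ with a new colour, delete it, and observe $\Gha\setminus M$ is $(k-1)$-regular with $k-1$ even; finish by the even case above. Stacking the matching colour with the cycle colours gives the desired \pf\ weak multicycle colouring of $\Gha$.

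The only subtlety — and the step I would be most careful about — is the bookkeeping that keeps the hypotheses of \Tr{2-regular-sub-graph}, \Tr{vertex-transitive-matching} and \Tr{inf-vertex-matching} valid at each stage: after deleting a $2$-regular subgraph or a perfect matching, the resulting graph need no longer be vertex transitive, but it is still regular of the right degree on each component, which is all \Tr{2-regular-sub-graph} needs; and we only ever invoke the matching theorems on the original graph, before any deletion. Handling disconnected $\Gha$ by transporting a single component's colouring along automorphisms of $\Gha$, and checking this is consistent, is routine but worth stating explicitly. No transfinite recursion is needed since $k$ is finite.
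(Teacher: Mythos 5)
Your argument for locally finite graphs is essentially the paper's own proof: peel off $2$-regular spanning subgraphs via \Tr{2-regular-sub-graph} when the degree is even, and in the odd case first remove a perfect matching obtained from \Tr{vertex-transitive-matching} (using the handshake-lemma parity argument to rule out the one-missed-vertex case) or from \Tr{inf-vertex-matching}; your explicit reduction to connected components is a small but welcome extra care, since both matching theorems assume connectedness. The one substantive difference is that the paper's proof also treats countable graphs in which every vertex has countably infinite degree: there the authors greedily decompose the edge set into spanning subgraphs each consisting of pairwise disjoint double rays, using an enumeration of edges and a sequence of vertices in which each vertex recurs infinitely often. Your closing remark that ``no transfinite recursion is needed since $k$ is finite'' silently assumes local finiteness; under the paper's blanket convention that all graphs outside the Appendix are locally finite this is defensible, but as the lemma is stated for every countable vertex transitive graph (and the paper's own proof covers the infinite-degree case), you should either invoke that convention explicitly or supply the double-ray construction for the non-locally-finite case.
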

\begin{proof}
	We first consider the case where $\Gha$ is (finite or) locally finite. As $\Gha$ is vertex transitive it is $n$-regular for some $n\in \mathbb{N}$. If $n$ is even, then we can apply  Theorem \ref{2-regular-sub-graph} recursively to decompose $E(\Gha)$ into $2$-regular spanning subgraphs, and attributing a distinct colour to the edges of each of those subgraphs yields a \pf\ weak multicycle colouring. 
	
	If $n$ is odd, then we first find a perfect matching $M$, colour its edges with the same colour, and treat $\Gha \backslash M$ as above to obtain a \pf\ weak multicycle colouring. To obtain $M$, note that if $\Gha$ is finite, then $\vert V(G) \vert$ is even since $\vert E(\Gha) \vert = n \vert V(G) \vert /2$. Therefore $\Gha$ has a perfect matching by Theorem \ref{vertex-transitive-matching} as no matching can miss exactly 1 vertex in this case. If $\Gha$ is infinite, then Theorem \ref{inf-vertex-matching} provides a perfect matching. 
	
	If $\Gha$ is not locally finite, then each vertex has countably infinite degree. We will decompose $E(\Gha)$ into an edge-disjoint union of multicycles $\{M_i\}_{i\in\mathbb{N}}$, where each $M_i$ is a spanning subgraph consisting of pairwise vertex-disjoint double-rays. For this, let $\{e_i\}_{i\in\mathbb{N}}$ be an enumeration of the edges of $\Gha$, and let $\{v_i\}_{i\in\mathbb{N}}$ be a sequence of vertices of $\Gha$ in which each $v\in V(\Gha)$ appears infinitely often. We greedily construct an  $M_0$ as above containing $e_0$ as follows. 
We start with $M_0^0=e_0$, and for $i=1,2,\ldots$, we extend the (possibly trivial) path in $M_0^{i-1}$ containing $v_i$ into a longer path by adding an edge of $\Gha- M_0^{i-1}$ at each of its end-vertices. As $M_0^{i-1}$ is finite, and every vertex has infinite degree, this is always possible. Finally, we let $M_0:= \bigcup_{i\in \mathbb{N}} M_0^i$. Since each $v\in V(\Gha)$ appears infinitely often as $v_i$, we deduce that $M_0$ is a spanning union of vertex-disjoint double-rays as desired. 

Having constructed $M_0$, we inductively construct the $M_i, i\geq 1$ so that $M_i$ contains $e_i$ unless $e_i$ is already in $\bigcup_{j<i} E(M_j)$, by noticing that $\Gha - \bigcup_{j<i} E(M_j)$ is a regular graph with countably infinite degree itself, and repeating the above proceedure. Then $\{M_i\}_{i\in\mathbb{N}}$ is the desired \pf\ weak multicycle colouring of $\Gha$.
\end{proof}

This combined with Theorem \ref{colour-induced-split-presentation} yields one of our main results:

\paragraph{\textbf{Theorem \ref{finite-vertex-presentation}}}
	Every 
	countable, vertex transitive, graph has a \pp.\\

We conclude this section with the following question.

\begin{que}
	For a vertex transitive graph $\Gha$ with \pp\ $P$ does $\Aut_{c-loc}(\Spz)$ act vertex transitively on $\Gha = \Sp(P)$ where $c$ is the colouring coming from $P$?
\end{que}

\subsection{Generalised results}

Here we extend some of our earlier results from 2-partite to general \pp s. Where the same arguments apply directly the proofs will be omitted. First we generalise Lemma \ref{verttransonV}:

\begin{prop} \label{genloctrans}
	For a \pp\ $P = \langle X \vert \sU \vert \sI \vert \phi \vert \sR \rangle$ there is a natural inclusion of the vertex group $G_x \leq \Aut_c(\Sp(P))$ for each $x \in X$. Moreover $G_x$ acts regularly on $V_x$, and so $\Sp(P)$ is $\vert X \vert$-Cayley. 
\end{prop}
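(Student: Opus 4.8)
The plan is to mimic the proof of Proposition~\ref{verttransonV}, replacing the 2-element index set by the general vertex-class set $X$ and using the general construction of $\Sp(P)$ from Definition~\ref{def SpP}. Set $\Gha := \Sp(P)$, and let $\widehat{\epsilon}: \widehat{\Gha} \rightarrow \sC(P)$ be the universal cover of the presentation complex, so that $\Gha$ is by definition the 1-skeleton of $\widehat{\Gha}$. Since $\sC(P)$ is a connected 2-complex, it is path-connected and locally path-connected, so the deck group $\Aut(\widehat{\epsilon})$ acts freely on each fibre $\widehat{\epsilon}^{-1}(x)$, and since $\widehat{\epsilon}$ is the universal cover we have $\Aut(\widehat{\epsilon}) \cong \pi_1(\sC(P), x)$ for each $x \in X$. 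As noted in \Sr{sec def gsp}, $\pi_1(\sC(P), x) = R_x \backslash W_{x,x} = G_x$ (the analogue of Corollary~\ref{pi-one}\eqref{pi_one I}, obtained by the same argument: the inclusion of the presentation graph into the presentation complex induces a surjection on $\pi_1$ with kernel the normal closure of the relators, and $\pi_1(C(P),x) = W_{x,x}$ via the Cayley-like colouring $c$). This realises $G_x$ as a subgroup of $\Aut(\widehat{\Gha})$, hence of $\Aut(\Gha)$ by restriction to the 1-skeleton.

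Next I would check that this inclusion lands inside $\Aut_c(\Sp(P))$. The edge-colouring $\tilde{c}$ of $\Gha$ is by definition the lift $c \circ \epsilon$ of the Cayley-like colouring $c$ of $C(P)$, equivalently the lift through $\widehat{\epsilon}$ of the colouring of $\sC(P)$; any deck transformation $f \in \Aut(\widehat{\epsilon})$ satisfies $\widehat{\epsilon} \circ f = \widehat{\epsilon}$, so $\tilde{c}(f(e)) = c(\widehat{\epsilon}(f(e))) = c(\widehat{\epsilon}(e)) = \tilde{c}(e)$ for every directed edge $e$. Hence $G_x \leq \Aut_c(\Gha)$.

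For the regular action on $V_x$, recall from \Sr{sec def gsp} that $V_x$ is the image of $\tilde{V}_x$ under the quotient $T_P \to \Sp(P)$, and — exactly as in the identification \eqref{eps vV} of the 2-partite case — $V_x = \epsilon^{-1}(x) = \widehat{\epsilon}^{-1}(x)$, since a vertex of $\widehat{\Gha}$ lies over $x$ iff it lies in $V_x$ (the cover $\epsilon$ sends $\tilde{V}_x/{\sim}$ to $x$ by construction). Freeness of the $G_x = \Aut(\widehat{\epsilon})$-action on the fibre $\widehat{\epsilon}^{-1}(x) = V_x$ is the standard fact about deck groups of covers with path-connected, locally path-connected total space quoted above, and transitivity on this fibre holds because $\widehat{\Gha}$ is the universal (in particular, regular/Galois) cover, for which the deck group acts transitively on each fibre. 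Therefore $G_x$ acts regularly on $V_x$. Finally, since $\{V_x : x \in X\}$ partitions $V(\Gha)$ into $|X|$ classes, each permuted freely and transitively by the single group $G_x$ (note that the abstract groups $G_x$ for different $x$ act on the respective $V_y$ as well, all being copies of $\Aut(\widehat{\epsilon})$ acting on $\widehat{\Gha}$, so one fixed copy acts freely with exactly $|X|$ vertex orbits), $\Gha$ is $|X|$-Cayley over $G_x$.

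The main obstacle — really the only point needing care — is verifying $\pi_1(\sC(P), x) \cong G_x$ in the general setting, i.e.\ that the relators $\sR_y$ glued in at the various vertex classes $y$ cut $\pi_1(C(P), x) = W_{x,x}$ down exactly to $R_x \backslash W_{x,x}$; this is the content of the discussion after Definition~\ref{def SpP} together with the van Kampen–type argument of Corollary~\ref{pi-one}, applied with the transitive action of $MF_P$ on $X$ (condition~\eqref{phi II}) ensuring $\sC(P)$ is connected so that basepoints in different classes give isomorphic fundamental groups. Everything else is a routine transcription of the proof of Proposition~\ref{verttransonV}.
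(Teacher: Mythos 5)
Your proposal is correct and follows exactly the route the paper intends: the paper omits the proof of Proposition~\ref{genloctrans} on the grounds that the deck-group argument of Proposition~\ref{verttransonV} "applies directly", and your write-up is precisely that argument transcribed from the index set $\{0,1\}$ to $X$, with the identifications $\pi_1(\sC(P),x)\cong G_x$ and $\widehat{\epsilon}^{-1}(x)=V_x$ checked along the way. If anything, you are slightly more complete than the model proof, since you explicitly supply the transitivity of the deck group on fibres (needed for \emph{regularity} rather than mere freeness), which the proof of Proposition~\ref{verttransonV} leaves implicit.
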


The vertex groups are still isomorphic due to the fact that $\pi_1$ does not depend on the choice of a base point:

\begin{prop}
	For every \pp\ $P = \langle X \vert \sU \vert \sI \vert \phi \vert \sR \rangle$, and every $x, y \in X$, the vertex groups $G_{x}, G_{y}$ are isomorphic.
\end{prop}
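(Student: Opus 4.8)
The plan is to build an explicit isomorphism $G_x \to G_y$ by conjugating inside $MF_P$, exactly mirroring the remark in \Sr{sec special} that the two vertex groups of a \bpp\ are conjugate via an element of $\sS_2$. I would work with the groupoid description recorded just after \Dr{def SpP}: there $G_x = W_{x,x}/R_x$, where $W_{x,x} = Stab(MF_P,x)$, where $R_x$ is the subgroup of $W_{x,x}$ generated by all $wrw^{-1}$ with $z\in X$, $w\in W_{x,z}$, $r\in\sR_z$, and where $W_{x,y} = \{w\in MF_P \mid x\cdot w = y\}$ records the ``translations'' from $x$ to $y$ under the (right) action of $MF_P$ on $X$ set up in that discussion.

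First I would invoke condition \eqref{phi II} of \Dr{def GSP}, transitivity of the $MF_P$-action on $X$, to pick some $g\in W_{x,y}$, which is therefore nonempty. I then claim that conjugation by $g$, that is $\psi\colon w\mapsto g^{-1}wg$, restricts to a group isomorphism $W_{x,x}\to W_{y,y}$. Indeed, for $w\in W_{x,x}$ one computes $y\cdot(g^{-1}wg) = \bigl((y\cdot g^{-1})\cdot w\bigr)\cdot g = (x\cdot w)\cdot g = x\cdot g = y$, using $y\cdot g^{-1}=x$; conjugation is always a homomorphism, and conjugation by $g^{-1}$ is a two-sided inverse, so $\psi$ is an isomorphism of groups.

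Next I would check that $\psi(R_x)=R_y$. Applying $\psi$ to a generator $wrw^{-1}$ of $R_x$ (with $z\in X$, $w\in W_{x,z}$, $r\in\sR_z$) gives $g^{-1}wrw^{-1}g = (g^{-1}w)\,r\,(g^{-1}w)^{-1}$, and since $y\cdot(g^{-1}w) = (y\cdot g^{-1})\cdot w = x\cdot w = z$ we have $g^{-1}w\in W_{y,z}$, so this is one of the generators of $R_y$. As $w$ ranges over all of $W_{x,z}$ the element $g^{-1}w$ ranges over all of $W_{y,z}$, so $\psi$ carries the generating set of $R_x$ onto that of $R_y$; hence $\psi(R_x)=R_y$, and $\psi$ descends to an isomorphism $G_x = W_{x,x}/R_x \to W_{y,y}/R_y = G_y$, as required.

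I do not expect any genuine obstacle here: the proof is the direct analogue of the $G_0\cong G_1$ remark for \bpp s, and the only thing to be careful about is keeping the left/right action conventions consistent so that the membership $g^{-1}w\in W_{y,z}$ lands on the correct side. I would also mention the equivalent topological one-liner: $G_x\cong\pi_1(\sC(P),x)$ by the identification recorded after \Dr{def SpP}, the complex $\sC(P)$ is path-connected by \eqref{phi II}, and the change-of-basepoint isomorphism along the walk in $C(P)$ dictated by any $g\in W_{x,y}$ yields $\pi_1(\sC(P),x)\cong\pi_1(\sC(P),y)$ — which is the same map as $\psi$ in topological language.
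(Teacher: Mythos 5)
Your proof is correct and is essentially the paper's own argument: the paper also picks an element $p\in W_{x,y}$ (phrased as a path in $C(P)$), uses the change-of-basepoint identity $W_{x,x}=pW_{y,y}p^{-1}$, and verifies $pR_yp^{-1}=R_x$ by the same computation on generators, so that conjugation descends to an isomorphism of the quotients. The only difference is cosmetic — the paper conjugates in the direction $G_y\to G_x$ and speaks of walks and $\pi_1(C(P),x)$ where you speak of the right $MF_P$-action on $X$ — and your closing remark about the change-of-basepoint isomorphism for $\pi_1(\sC(P),\cdot)$ is exactly the paper's underlying viewpoint.
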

\begin{proof}
As above, let  $C(P) =: C$ be the presentation graph of $P$. Let $x,y \in X = V(C)$. Recall that $G_x := W_{x,x}/ R_x$ is the right quotient of $W_{x,x}$ by $R_x$, where $W_{x,z}$ is the set of paths in $C$ from $x$ to $z$ up to homotopy (in particular, $W_{x,x} = \pi_1(C,x)$), and\\ 
$$R_x := \langle \{ w\sW^{-1}_z(r)w^{-1} \vert r \in \sR_z, w \in W_{x,z} , z \in X\} \rangle$$ with $\sW^{-1}_z$ the map from words in $MF_P$ to paths in $C$ defined in section 2. Let $p \in W_{x,y}$ be a path from $x$ to $y$ in $C$. As $\pi_1$ is base point preserving, we have
\begin{align} \label{pi-one-preservation}
W_{x,x} = p W_{y,y} p^{-1}.
\end{align}
Moreover, note that
\begin{align*}
pR_yp^{-1} = & \langle \{ (pw)\sW_z^{-1}(r)(pw)^{-1} \vert r \in \sR_z, w \in W_{y,z} , z \in X \} \rangle\\
=& \langle \{ w'\sW_z^{-1}(r)(w')^{-1} \vert r \in \sR_z, w' \in W_{x,z} , z \in X \} \rangle\\
=& R_x.
\end{align*}
This defines a homomorphism $\phi: G_y \rightarrow G_x$ by $\phi: w R_y \mapsto p w p^{-1} R_x$ for every $w\in W_{y,y}$. It is surjective by (\ref{pi-one-preservation}) and injective as $pR_yp^{-1} = R_x$. Thus it is an isomorphism proving our claim.
\end{proof}

We generalise Proposition~\ref{prestrans} to obtain a sufficient condition for vertex transitivity.

\begin{prop} \label{simplicial-automorphisms-lift}
	Let $P = \langle X \vert \sU \vert \sI \vert \phi \vert \sR \rangle$ be a \pp. If the  presentation complex $\sC(P)$ is vertex transitive, then so is $\Sp(P)$.
\end{prop}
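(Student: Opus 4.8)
The plan is to mimic the proof of \Prr{prestrans} exactly, using the universal-cover lifting property to promote simplicial automorphisms of $\sC(P)$ to automorphisms of $\Sp(P)$. First I would combine this with \Prr{genloctrans}, which tells us that each vertex group $G_x$ acts regularly — in particular transitively — on its own class $V_x$. Thus the only thing missing for vertex transitivity of $\Gha := \Sp(P)$ is, for each pair $x,y \in X$, an automorphism of $\Gha$ sending some vertex of $V_x$ to some vertex of $V_y$.

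To produce such an automorphism, let $\epsilon : \widehat{\Gha} \to \sC(P)$ be the universal cover, where $\widehat{\Gha}$ is the universal cover of $\sC(P)$ and $\Gha$ is its $1$-skeleton, and recall from the discussion after \Dr{def SpP} (and the analogue of \eqref{eps vV}) that $\epsilon(v) = v_x$ exactly when $v \in V_x$, where $v_x \in \sC(P)$ denotes the vertex corresponding to the class $x \in X$. Suppose $\psi \in \Aut(\sC(P))$ is a simplicial automorphism with $\psi(v_x) = v_y$. Then $\psi \circ \epsilon : \widehat{\Gha} \to \sC(P)$ is a map from the simply connected space $\widehat{\Gha}$ to $\sC(P)$, so by the lifting property of covering spaces (\Sr{sec covers}) it lifts through $\epsilon$ to a map $\widehat{\psi} : \widehat{\Gha} \to \widehat{\Gha}$ with $\epsilon \circ \widehat{\psi} = \psi \circ \epsilon$; applying the same argument to $\psi^{-1}$ and using uniqueness of lifts shows $\widehat{\psi}$ is an automorphism of $\widehat{\Gha}$. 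Since $\widehat{\psi}$ is a homeomorphism of a CW-complex taking $0$-cells to $0$-cells and $1$-cells to $1$-cells (it covers the simplicial map $\psi$), it restricts to an automorphism of the $1$-skeleton $\Gha$.

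Finally, for any $v \in V_x$ we compute $\epsilon(\widehat{\psi}(v)) = \psi(\epsilon(v)) = \psi(v_x) = v_y$, so $\widehat{\psi}(v) \in V_y$ by \eqref{eps vV}. Hence $\widehat{\psi}|_{\Gha}$ is an automorphism of $\Gha$ carrying a vertex of $V_x$ into $V_y$. Composing such automorphisms over a path of classes (using that $X$ is connected under the $MF_P$-action, condition \eqref{phi II}) together with the regular actions of the vertex groups on each $V_x$ from \Prr{genloctrans}, we conclude that $\Aut(\Gha)$ acts transitively on $V(\Gha)$, i.e.\ $\Sp(P)$ is vertex transitive.

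I expect no serious obstacle here: the argument is a routine transcription of \Prr{prestrans} with the two-element index set replaced by the general $X$. The only point requiring a little care is making sure the lift $\widehat{\psi}$ genuinely restricts to a graph automorphism of the $1$-skeleton rather than merely a homeomorphism of the complex — this follows because $\widehat{\psi}$ is, locally, a composition of the local homeomorphisms given by $\epsilon$ with the simplicial map $\psi$, hence sends cells to cells of the same dimension respecting incidence; and because $\widehat{\psi}$ commutes with $\epsilon$, which is a cellular covering map, so that the preimages of the $0$- and $1$-cells of $\sC(P)$ are permuted among themselves.
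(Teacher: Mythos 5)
Your proposal is correct and is essentially the argument the paper intends: the paper omits the proof of this proposition precisely because it is the direct transcription of the proof of \Prr{prestrans} (lift a simplicial automorphism of $\sC(P)$ through the universal cover, restrict to the $1$-skeleton, and combine with the regular action of the vertex groups from \Prr{genloctrans}). The only cosmetic difference is that your final "composition over a path of classes" is unnecessary, since vertex transitivity of $\sC(P)$ already supplies, for each pair $x,y\in X$, a single automorphism with $\psi(v_x)=v_y$.
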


\subsection{Quasi-isometry to vertex groups}

It is a straightforward consequence of the \v{S}varc--Milnor lemma \cite{Mi68} that if $P = \langle X \vert \sU \vert \sI \vert \phi \vert \sR \rangle$ is a \pp\ with finite $X$, then $\Gha := \Sp(P)$ is quasi-isometric to (any \Cg\ of) $G_x$. In this section we provide the details for the non-expert reader. This will be used in \Sr{sec Conc} to argue that there are \pcg s that cannot be represented by a \pp\ with finite $X$.

\medskip
A \defin{quasi-isometry} between metric spaces $(X,d)$ and $(Y,d')$ is a (not necessarily continuous) function $f: X \to Y$ satisfying the following two statements for some constants $A,B\in \mathbb{R}_+$:
$$ \frac1{A} d(x,z) - B \leq d'(f(x),f(z) \leq A d(x,z) + B$$
for every $x,z\in X$, and
for every $y\in  Y$ there is $x\in X$ such that $d'(f(x),y)\leq B$.

If such an $f$ exists, we say that $(X,d)$ and $(Y,d')$ are \defin{quasi-isometric} to each other. (Easily, this is an equivalence relation.)
It is well-known, and easy to check, that any two finitely generated \Cg s of the same group are quasi-isometric to each other. We say that a metric space $(X,d)$ is \defin{quasi-isometric} to a group $G$, if $(X,d)$ is {quasi-isometric} to some, hence to every, finitely generated \Cg\ of $G$.

\begin{prop} \label{quasi}
Let $P = \langle X \vert \sU \vert \sI \vert \phi \vert \sR \rangle$ be a \pp\ with finite $X$. Then $\Gha := \Sp(P)$ is quasi-isometric to $G_x$ for every $x \in X$.
\end{prop}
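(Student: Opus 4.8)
The plan is to apply the \v{S}varc--Milnor lemma to the action of the vertex group $G_x$ on the universal cover $\widehat{\sC(P)}$ of the presentation complex $\sC(P)$, and then to note that $\Sp(P)$ is quasi-isometric to $\widehat{\sC(P)}$. First I would fix a basepoint $x \in X$ and recall from \Prr{genloctrans} that $G_x$ embeds in $\Aut_c(\Sp(P))$ and acts regularly on $V_x = \epsilon^{-1}(x)$, where $\epsilon: \Sp(P) \to C(P)$ is the covering map; equivalently, identifying $G_x$ with $\pi_1(\sC(P),x)$ via the classical correspondence (as in the proof of \Prr{verttransonV}), $G_x$ is the deck group of the universal cover $\widehat{\epsilon}: \widehat{\sC(P)} \to \sC(P)$, acting freely on $\widehat{\sC(P)}$.

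The key step is to verify the hypotheses of the \v{S}varc--Milnor lemma for the $G_x$-action on $\widehat{\sC(P)}$, equipped with a length metric making each cell isometric to a standard Euclidean cell. I would equip $\sC(P)$ with such a metric; since $X$ is finite, $\sU, \sI$ are finite (each generator gives edges of $C(P)$, and $C(P)$ has $|X|$ vertices of bounded degree, so $\sU\cup\sI$ is finite), and the relator sets $\sR_x$ used to attach $2$-cells can be taken finite for the purpose of the quasi-isometry type — more carefully, $\sC(P)$ need not be compact if $\sR$ is infinite, but what matters is that $\widehat{\sC(P)}$ is a geodesic space, that the $G_x$-action is by isometries, that it is \emph{properly discontinuous} (the action on the vertex fibre $\widehat{\epsilon}^{-1}(x)$ is free and the graph $\Sp(P)$ is locally finite, since it is $(2|\sU|+|\sI|)$-regular, so orbits are discrete and cell-stabilizers are trivial), and that the quotient $\sC(P)$ has \emph{finite diameter} (this holds because $X$ is finite, so $C(P)$ has finite diameter, and attaching $2$-cells only decreases distances). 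These are exactly the hypotheses of \v{S}varc--Milnor \cite{Mi68}, which then yields that $G_x$ with any word metric is quasi-isometric to $\widehat{\sC(P)}$.

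It remains to connect $\widehat{\sC(P)}$ to $\Gha = \Sp(P)$. Here I would use that $\Sp(P)$ is by \Dr{def SpP} the $1$-skeleton of $\widehat{\sC(P)}$, and that the inclusion of the $1$-skeleton of a CW-complex with finitely many cell-types (in each dimension, up to the $G_x$-action, since $\sC(P)$ has finitely many vertices, boundedly many edges, and each $2$-cell has bounded perimeter — or again we may restrict to finitely many relators for the QI statement) into the complex is a quasi-isometry: every point of a $2$-cell is within bounded distance of the boundary, and paths through $2$-cells can be replaced by boundary paths at bounded multiplicative cost. Composing, $\Gha = \Sp(P)$ is quasi-isometric to $\widehat{\sC(P)}$, which is quasi-isometric to $G_x$, and since quasi-isometry is an equivalence relation and all finitely generated \Cg s of $G_x$ are mutually quasi-isometric, this proves the claim. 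The main obstacle I anticipate is the bookkeeping around possibly-infinite relator sets: one must either argue that the QI type of $\widehat{\sC(P)}$ is unchanged by discarding all but finitely many relators (true because $\Sp(P)$ is determined up to isomorphism and we only need finitely many relators to generate each $R_x$ up to finite index is \emph{not} automatic — rather, one observes directly that the $1$-skeleton $\Gha$ is a fixed locally finite graph and run \v{S}varc--Milnor on $\Gha$ itself), or better, bypass the complex entirely and apply \v{S}varc--Milnor directly to the action of $G_x$ on the locally finite graph $\Gha$: the action is free on $V_x$, proper since $\Gha$ is locally finite, and cocompact since $V(\Gha)/G_x$ has at most $|X|$ orbits with $X$ finite, while $\Gha$ is connected — which gives the result cleanly.
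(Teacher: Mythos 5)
Your final route---applying the \v{S}varc--Milnor lemma directly to the free, properly discontinuous, cocompact action of the deck group $G_x \cong \pi_1(\sC(P),x)$ on the locally finite connected graph $\Gha$ (cocompact because the quotient is the finite graph $C(P)$)---is exactly the paper's proof. You were right to abandon the detour through a metric on $\widehat{\sC(P)}$: with possibly infinite relator sets the presentation complex need not be compact and the $1$-skeleton inclusion need not be a quasi-isometry, and the paper avoids this in the same way you ultimately do.
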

\begin{proof}
Consider the inclusion map $i: C \rightarrow \sC$ from the presentation graph $C := C(P)$ to the presentation complex $\sC := \sC(P)$ of $P$. It is well-known  \cite[Proposition 1.26]{HA02}  that the inclusion of the one skeleton into a 2-complex induces a surjection on the level of fundamental groups, and the kernel is exactly the normal closure of the words bounding the 2-cells. Thus $i_{\ast} : \pi_1(C,x) \rightarrow \pi_1(\sC,x)$ is a surjection with kernel $R_x$, so that $\pi_1(\sC,x) = \pi_1(C,x)/R_x = W_{x,x}/R_x = G_x$. 

Let $\widehat{\Gha}$ be the universal cover of $\sC$, with covering map $\widehat{\eta} : \widehat{\Gha} \rightarrow \sC$. As $\pi_1(\sC,x) = G_x$ we have an action of $G_x$ on $\widehat{\Gha}$ ---and it's 1-skeleton $\Sp(P) =: \Gha$--- by deck transformations. 
We know the quotient of a universal cover by the group of deck transformations is the space itself  \cite[p 70]{HA02}. Thus the quotient of $\widehat{\Gha}$ by $G_x$ is $\sC$, and so the quotient of $\Gha$ by $G_x$ is $C$. Since $C$ is finite when $X$ is, we deduce that the action of $G_x$ on $\Gha$ is co-compact.

Lastly, we claim that the action of $G_x$ by deck transformations on $\Gha$ is properly discontinuous. Any compact subset $K \subset \Gha$ is bounded in the graph metric. By \Prr{genloctrans}, $G_x$ acts regularly on $V_x$, and in particular the stabiliser of each vertex is trivial. Our claim now easily follows,  e.g.\ by using the fact that every cellular action on a CW-complex with finite stabilisers of cells is properly discontinuous \cite[Theorem~9,~(2)=(10)]{KapPD}. 

To summarise, the action of $G_x$ on $\Gha$ is  properly discontinuous and co-compact. The \v{S}varc--Milnor lemma \cite{Mi68} says exactly that $G_x$ is finitely generated, and quasi-isometric to $\Gha$ for any such action.
\end{proof}

\comment{
We can express $n$-Cayley graphs as \pcg s as we did above for bi-Cayley graphs. Let $\Gha$ be an $n$-Cayley graph, with a free action given by $\Gpa \leq \Aut(\Gha)$. We choose $n$ vertices $(1_{\Gpa})_i, 1 \leq i \leq n$ each of which lying in a different orbit  of $G$. Then we index $x \in V(\Gha)$ by $g_i$ where $g \in \Gpa$ and $1 \leq i \leq n$ so that $g_i := g \cdot (1_{\Gpa})_i = x$. 
Now define $\sS_{i,j} = \{g \in \Gpa \vert ((1_{\Gpa})_i, g_j) \in \overrightarrow{E}(\Gha)\} \subset \Gpa$. Note that $\sS_{i,j} = \sS_{j,i}^{-1}$ as $g^{-1} \cdot ((1_{\Gpa})_i, g_j) = ((g^{-1})_i,(1_{\Gpa})_j)$, for $1 \leq i,j \leq n$. The directed edges of $\Gha$ are given by $g \cdot ((1_{\Gpa})_i, s_j) = (g_i, (gs)_j)$ where $g \in \Gpa$, and $s \in \sS_{i,j}$.
We say that $\Gha$ is of \defin{$(s,t)$-type}, if $\vert \sS_{i,i} \vert = s$ and $\vert \sS_{i,j} \vert = t$ for all $i \not = j$.

\begin{prop}
	Let $\Gha$ be an  $n$-Cayley graph of $(s,t)$-type such that for all $s \in S_{i,i}$ we have $s^{2} \not = 1_G$. Then there exists a uniform \pp\ of $\Gha$.
\end{prop}
\begin{proof}
	Fix bijections $\psi_j: \sS_{j,j} \rightarrow \sS_{1,1}$ where $\psi_j(s^{-1}) = \psi_j(s)^{-1}$, and $\psi_{i,j}: \sS_{i,j} \rightarrow [t]$ for $1 \leq i < j \leq n$. Pick $S \subset \sS_{1,1}$ such that $S \cap S^{-1} = \emptyset$ and $S \cup S^{-1} = \sS_{1,1}$, which is possible as $s^2 \not = 1$ for all $s \in \sS_{1,1}$. Set $X = [n]$, and from Lemma \ref{complete-graph-mutlicycle-colouring} choose a multicycle colouring of $K_n$ on $X$, which we represent as a set fix point free permutations $M = c'(\Omega) \subset \Sym_n$. Then we obtain the uniquely defined map $\theta : ([n] \times [n] \backslash \{(a,a)\}) \rightarrow M \cup M^{-1}$ where $\theta(a,b)(a) = b$. Set $\sU = \{m_i, s \vert m \in M, i \in [t], s \in S \mbox{ where } m^2 \not = 1\}$ and $\sI = \{m_i \vert m \in M, i \in [t] \mbox{ where } m^2 = 1\}$ where $\phi(m_i) = m$ and $\phi(s) = 1_{X}$. Colour $\overrightarrow{E}(\Gha)$ by 
	\[
		c(g_i, (gs)_j) = \begin{cases} \theta(i,j)_{\psi_{i,j}(s)} & \mbox{if } i < j\\
		\theta(i,j)_{\psi{j,i}(s^{-1})} & j < i\\ \psi_j(s) \end{cases}.
	\]
	Set $R_x = \pi_1(\Gha,1_x)$ for all $x \in [n]$ where we use the colouring to identify $R_x \subset MF_P$. The colouring gives us that $\Gha$ covers $C$ which is the edge disjoint union of $t$ copies of $K_n$ with $s$ loops at each vertex. Which by the choice of $R_x$ we obtain $\Spz = \Gha$.
\end{proof}

Within the proof we construct a \pf\ weak multicycle colouring which by Proposition \ref{colour-induced-split-presentation} gives us a \pp, however this might not be uniform. Here every finite vertex transitive graph $\Gha$ is $\vert \Gha \vert$-Cayley over the trivial group, but not of $(s,t)$-type. 

}
\section{Line graphs of Cayley graphs admit \pp s} \label{sec Line}

In this section we show that every line graph of a Cayley graph can be represented as a \pcg. For this we will use 1- and 2-factorisations of the complete graphs as a tool.
Let $K_n$ be the complete graph on $n$-vertices. If $n$ is odd then $K_n$ has a Hamiltonian decomposition, a partition of the edges into spanning cycles \cite{AB08}. If $n$ is even, then a special case of Baranyai's theorem \cite{BZ75} gives us a 1-factorisation of $K_n$, i.e.\ a partition of the edges into perfect matchings. 

Thus in either case, we have found a \pf\ multicycle colouring $c: E(K_n) \rightarrow \Omega$ of $K_n$. Next, we want to associate each colour $\omega \in \Omega$ with a permutation $\pi_\omega \in Sym_n$ of the vertices of $K_n$. To do so, for each $\omega \in \Omega$ such that $c^{-1}(\omega)$ is 2-regular, we pick an orientation $O_\omega \subset c^{-1}(\omega)$, (such that $O_{\omega} \cap O_{\omega}^{-1} = \emptyset$ and $O_{\omega} \cup O_{\omega}^{-1} = c^{-1}(\omega)$), and let $\pi_\omega$ be the corresponding permutation (sending each vertex to its successor in $O_{\omega}$. For each $\omega \in \Omega$ such that $c^{-1}(\omega)$ is 1-regular, we let $\pi_\omega$ be the permutation that exchanges the two end-vertices of each edge in $c^{-1}(\omega)$.

\comment{
\begin{lem} \label{complete-graph-mutlicycle-colouring}
	\cite{AB08,BZ75} The complete graph $K_n$ has multicycle colouring $c: E(K_n) \rightarrow \Omega$, which can be represented as permutations $c' : \Omega \rightarrow Sym_n$.
\end{lem}
\begin{proof}
}

\begin{prop}
Let $\Gamma=\Cay\langle \sS \vert \sR \rangle$ be a Cayley graph. Then the line graph $L(\Gamma)$ can be represented as $\Sp(P)$ for a \pp\ $P$ with at most $|\sS|$ vertex classes.
\end{prop}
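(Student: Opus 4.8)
The plan is to realise $L(\Gamma)$ as the $1$-skeleton of the universal cover of a presentation complex built on the finite quotient graph $L(\Gamma)/G$, the Cayley-like colouring of that quotient being furnished by a $1$-factorisation of a complete graph exactly as set up just before the Proposition. Write $\Gamma=\Cay\langle\sS\vert\sR\rangle$ with $G=\langle\sS\vert\sR\rangle$; we may assume $\sS$ generates $G$, so that $\Gamma$, and hence $L(\Gamma)$, is connected, and I first treat the main case where no generator has order $\le 2$ in $G$ (so $\Gamma$ is simple), the general case needing only routine extra bookkeeping (and a Hamiltonian decomposition of $K_n$ in place of a $1$-factorisation when $n=|\sS|$ is odd), which I would relegate to a remark. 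Under this assumption $G$ acts freely by left multiplication on $V(L(\Gamma))=E(\Gamma)$, so $\pi\colon L(\Gamma)\to C:=L(\Gamma)/G$ is a regular covering of graphs with deck group $G$, and $C$ is a connected graph whose vertex set $X$ is the set of $G$-orbits of edges of $\Gamma$. Since the orbit of an edge $\{g,gs\}$ depends only on $\{s,s^{-1}\}$, we have $|X|\le|\sS|$.

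I would then fix a Cayley-like colouring of $C$. Identify the vertices of $C$ with the pairs $\{s,s^{-1}\}\subseteq\sS$ and the undirected edges of $C$ with the unordered pairs $\{a,b\}$ ($a\ne b$) from $\sS$, the edge $\{a,b\}$ joining $\{a,a^{-1}\}$ to $\{b,b^{-1}\}$ and being a loop precisely when $b=a^{-1}$; thus $C$ is $(2|\sS|-2)$-regular and carries exactly one loop at each vertex. Applying the construction preceding the Proposition to $K_n$ on the vertex set $\sS$ ($n=|\sS|$), take a $1$-factorisation $M_1,\dots,M_{n-1}$ with $M_1=\{\{s,s^{-1}\}:s\in\sS\}$ the inversion matching. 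Viewed as a set of edges of $C$, each $M_j$ is a $2$-regular spanning subgraph: $M_1$ is exactly the set of loops of $C$, while for $j\ge2$ every vertex $\{s,s^{-1}\}$ lies on precisely the two distinct non-loop edges $\{s,\pi_j(s)\}$ and $\{s^{-1},\pi_j(s^{-1})\}$, since $M_j$ contains no inversion pair. Orienting the cycles of $M_j$ gives a permutation $\phi(t_j)\in\Sym_X$ (with $\phi(t_1)=\mathrm{id}$); declaring $\sU:=\{t_1,\dots,t_{n-1}\}$, $\sI:=\emptyset$ and colouring the directed edges of $C$ by $\sU\cup\sU^{-1}$ accordingly gives a Cayley-like colouring in the sense of \Dr{def Cl}. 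As $C$ is connected and the $t_j$ with $j\ge2$ already cover all non-loop edges of $C$, the induced action of $MF_P$ on $X$ is transitive, so \eqref{phi II} holds, and \eqref{phi I} is vacuous.

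Lift this colouring along $\pi$ to a Cayley-like colouring of $L(\Gamma)$, so that $\pi$ respects colourings and is therefore a covering of the associated spaces. Fix a base vertex $v_x\in\pi^{-1}(x)$ for each $x\in X$ and set $\sR_x:=\sW_{v_x}(\pi_1(L(\Gamma),v_x))$ with $\sW$ as in \Dr{def dict}; by \eqref{W cover} closed walks at $v_x$ push to closed walks at $x$ with the same $\sW$-value, so $\sR_x\subseteq\sW_x(\pi_1(C,x))=Stab(MF_P,x)$, meeting \eqref{SP iv}. Then $P:=\langle X\vert\sU\vert\sI\vert\phi\vert\{\sR_x\}\rangle$ is a \pp\ with $|X|\le|\sS|$ vertex classes, and it remains to show $\Sp(P)\cong L(\Gamma)$. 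Both $\Sp(P)$ and $L(\Gamma)$ cover $C=C(P)$ and carry the colouring lifted from $C$ (the former via the cover $\epsilon$ of \Dr{def SpP}), so by \Tr{cover-correspondence} it suffices to check that the $\sW_x$-images of $\epsilon_*\pi_1(\Sp(P))$ and of $\pi_*\pi_1(L(\Gamma))$ coincide inside $Stab(MF_P,x)$ for each $x$. The second image is $\sR_x$ by construction; the first is, by the general-\pp\ analogue of \Cr{pi-one} (cf.\ the alternative description of $\Sp(P)$ in \Sr{sec def gsp}), the group $R_x$ obtained from the $\sR_y$ by adding conjugates by elements of $W_{x,y}$ and taking normal closure in $Stab(MF_P,x)$. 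One proves $R_x=\sR_x$ exactly as in the bi-Cayley representation result of \Sr{sec Bi-Cayley}: since $G$ acts on $L(\Gamma)$ by colour-preserving automorphisms transitively on each orbit $\pi^{-1}(x)$, the group $\sW_{v_x}(\pi_1(L(\Gamma),v_x))$ is independent of the choice of $v_x$ (as in \eqref{WW}), conjugation along a path between orbits identifies $\sR_x$ with the appropriate conjugate of $\sR_y$ (as in \eqref{gG}), and together these show $w\sR_yw^{-1}\subseteq\sR_x$ for all $w\in W_{x,y}$; in particular $\sR_x$ is already normal in $Stab(MF_P,x)$, so $R_x=\sR_x$. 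Hence $\Sp(P)\cong L(\Gamma)$, and since all the maps involved respect colourings this is an isomorphism of graphs.

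The step I expect to be the real obstacle is identifying the right Cayley-like colouring of $C=L(\Gamma)/G$: one has to see that $C$ carries exactly one loop at each vertex (supplied by the inversion matching $M_1$) and that each of the remaining $1$-factors of $K_{\sS}$ collapses to a $2$-regular — hence orientable — subgraph of $C$, the choice of orientation being precisely what doubles the $|\sS|-1$ factors up to the required degree $2|\sS|-2$. The homogeneity bookkeeping in the final step is the other delicate point, but it runs parallel to the bi-Cayley case. The case where some generator has order $\le 2$, where $G$ need not act freely on $E(\Gamma)$ and $|\sS|$ may be odd, requires the Hamiltonian-decomposition half of the preamble together with minor additional care; one still obtains $|X|\le|\sS|$.
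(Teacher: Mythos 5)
Your route is genuinely different from the paper's and is, in outline, viable: instead of writing down explicit relators and verifying them by a homotopy argument, you quotient $L(\Gamma)$ by the free, colour-preserving $G$-action, equip the finite quotient $C=L(\Gamma)/G$ with a Cayley-like colouring, set $\sR_x:=\sW_{v_x}(\pi_1(L(\Gamma),v_x))$, and invoke the covering correspondence; your closing argument that $\sR_x$ is basepoint-independent and conjugation-closed (so that $R_x=\sR_x$) correctly mirrors the bi-Cayley proposition of \Sr{sec Bi-Cayley}. This is arguably cleaner than the paper's proof, which constructs relators of two kinds (rewritten elements of $\sR$, plus star relations) and then shows, by collapsing stars of $\Gamma$, that every closed walk of $L(\Gamma)$ is a product of their conjugates.

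However, the combinatorial core of your construction --- the Cayley-like colouring of $C$ --- contains a genuine error. A vertex of $\Gamma$ has $2|\sS|$ incident edges (one for each element of $\sS\cup\sS^{-1}$, assuming $\sS\cap\sS^{-1}=\emptyset$ and no involutions), so its star induces a $K_{2|\sS|}$ in $L(\Gamma)$; hence $L(\Gamma)$ is $(4|\sS|-2)$-regular, and $C$, being covered by $L(\Gamma)$, must also be $(4|\sS|-2)$-regular with $|\sS|(2|\sS|-1)$ edges: its edges are the unordered pairs $\{a,b\}$ with $a,b\in\sS\cup\sS^{-1}$, not pairs from $\sS$. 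Your claimed degree $2|\sS|-2$ contradicts this, and your $1$-factorisation of $K_{|\sS|}$ into $|\sS|-1$ factors colours only $|\sS|^2$ of these edges; moreover the ``inversion matching'' $\{\{s,s^{-1}\}:s\in\sS\}$ is not a set of edges of $K_{\sS}$ at all unless you have silently symmetrised $\sS$ --- in which case $|\sS|$ is automatically even, your odd-$n$ caveat is vacuous, and the number of vertex classes is really $|\sS\cup\sS^{-1}|/2$. The repair is to factorise $K_{\sS\cup\sS^{-1}}$, i.e.\ $K_{2|\sS|}$ (always of even order), into $2|\sS|-1$ perfect matchings, with the inversion matching as one factor supplying the single loop at each vertex of $C$ and the remaining $2|\sS|-2$ factors each collapsing to a $2$-regular, hence orientable, spanning subgraph of $C$. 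This doubling by edge-direction is precisely what the paper encodes with the four generators $m_{i,j}$, $(i,j)\in\{-1,1\}^2$, per colour of $K_{\sS}$. With that repair, the rest of your argument goes through.
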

\begin{proof}
The \pp\ $P$ we will construct will have one vertex class for each generator in $\sS$. Since the edges of $L(\Gamma)$ are precisely the pairs of incident edges of $\Gamma$, we will identify the generators of $P$ with pairs of generators  $s,t\in \sS$. Since we need to pay attention to the directions of the edges of $\Gamma$, each such pair $s,t$ will give rise to four generators of $P$, indexed by the elements of $\{-1,1\}^2$. Similarly, each $s\in \sS$ will  give rise to two generators of $P$, since there are pairs of incident edges of $\Gamma$ labelled by $s$, and there are two choices for their directions. The relators of $P$ will be of two kinds. The first kind is just obtained by rewriting the elements of $\sR$ in terms of the new generators. The second kind will correspond to closed walks in $L(\Gamma)$ contained in the star of a vertex of $\Gamma$.

We proceed with the formal definition of $P$. The vertex classes of $P$ will be identified with the generating set $\sS$ of $\Gamma$.    Let $K_{\sS}$ denote the complete graph with $V(K_{\sS}) = \sS$. From the above discussion we obtain a multicycle colouring  $M \subset \Sym_{\sS}$ of $K_{\sS}$ where each colour is identified with a permutation of $\sS$. The generating set of our \pp\ $P$ comprises the formal symbols\\ $\sU = \{ e, e^{-1} \} \cup \{m_{i,j} \vert m \in M, i,j \in \{-1,1\} \mbox{ where } m^2 \not = 1\}$ and\\ $\sI = \{m_{i,j} \vert m \in M, i,j \in \{-1,1\} \mbox{ where } m^2 = 1\}$.  Set $\sS' = \sU \cup \sI$, the generators of $P$. We need to associate a permutation $\phi(s)$ of the vertex classes with each  $s\in \sS'$, and we do so by
\[
\phi: \begin{array}{c} m_i\\ e,e^{-1} \end{array} \mapsto \begin{array}{c}
m\\ 1_\sS
\end{array}.
\]
Let $\theta : \overrightarrow{E}(K_{\sS}) \rightarrow M \cup M^{-1}$ be the colouring of $K_{\sS}$ by $M \cup M^{-1}$. We can think of $\theta$ as a map from $\sS \times \sS \backslash \{(s,s) \vert s \in \sS\}$  to $ M \cup M^{-1}$ where $\theta(a,b)(a) = b$. Let $S = \{s, s^{-1} \vert s \in \sS\}$ be $\sS$ with formal inverses. Define a map $\chi: S \times S \backslash \{(s,s^{-1}) \vert s \in \sS\} \rightarrow \sS'$ where
\[
\chi(a,b) = \begin{cases} e & \mbox{ if } a = b \in \sS\\
e^{-1} & \mbox{ if } a = b \notin \sS\\
m_{i,j} & \mbox{ if } \theta(a,b) = m \mbox{ where  } a^i,b^j \in \sS.  \end{cases}
\]
Here we make the identification that $(m_{i,j})^{-1} = (m^{-1})_{-j,-i}$. 

We now define the sets of relators $\sR_a, a\in \sS$ of $P$. For each relator $r := a_1 a_2 \ldots a_k \in \sR$ we add\\ $\chi(r) := \chi(a_1,a_2)\chi(a_2,a_3) \ldots \chi(a_{k-1},a_k) \chi(a_k,a_1)$ to $\sR_{a_1^{\pm 1}}$. (These are the relators of the first kind as explained at the beginning of the proof.) Lastly, we add relations (of the second kind) corresponding to the star of each vertex of $\Gamma$ as follows. Let $a_1 \ldots a_k \in W_{\sS}$ be any word equaling the identity in $MF_P$, and add $\chi(a_1,a_2) \ldots \chi(a_k,a_1)$ to $\sR_{a_1^{\pm 1}}$, where $\chi(s,s^{-1})$ is the empty word. Let $\sR':= \{\sR_a, a\in \sS\}$. We have now constructed our presentation $P := \langle \sS \vert \sU \vert \sI \vert \phi \vert \sR' \rangle$.

Next, we prove that $\Sp(P)$ is isomorphic to $L(\Gha)$. First label
\[
V(L(\Gha)) = \{ [(g,gs)] \vert g \in \Gpa, \ s \in \sS\} \mbox{ and } \overrightarrow{E}(L(\Gha)) = \{ (g,s_1,s_2) \vert g \in \Gpa, \ s_1,s_2 \in \sS \cup \sS^{-1}, \ s_1 \not = s_2^{-1} \}
\]
so that the edge $(g,s_1,s_2)$ connects $[(g,gs_1)]$ and $[(gs_1,gs_1s_2)]$. Let $C := C(P)$ be the presentation graph of $P$. Then we can define a map $\epsilon : L(\Gha) \rightarrow C$ by letting $\epsilon([(g,gs)]) = s$ and letting $\epsilon((g,s_1,s_2))$ be the edge of colour $\chi(s_1,s_2)$ coming from $s_1^{\pm 1} \in \sS$. One can show that the relations in $\sR_x$ hold in $L(\Gha)$ for all $x \in \sS$. It remains to show that these relations suffice.

Intuitively we are going to argue that any closed walk $p$ in $L(\Gha)$ is labelled by some $r \in \langle\langle\sR\rangle \rangle_{MF_P}$ interwoven with relations coming from the stars at the vertices. One can observe this by just projecting $p$ to a closed walk in $\Gha$, where after some cancelations happening within the stars of vertices, we are left with a closed walk labelled by a word $r$ than can be expressed in terms of the relators in $\sR$. We proceed with this formally.

Define a topological map $\Phi: L(\Gha) \rightarrow \Gha$ by mapping $[(g,gs)] \in V(L(\Gha))$ to the midpoint of the edge $(g,gs)$, and $(g,s_1,s_2) \in E(L(\Gha))$ to the arc in the star of $gs_1$ connecting the midpoints of $[(g,gs_1)]$ and $[(gs_1,gs_1s_2)]$. Consider a closed walk $p$ in $L(\Gha)$. We can write $p = \prod_{i=0}^{n-1} (g^i,s^i_1,s^i_2)$. As $\Phi(p)$ is a closed walk in $\Gha$ we know it can be contracted to a path given by $g_0 g_1 \ldots g_{m-1}$ for $g_i \in \Gpa$. Now we want to group the edges of $p$ by the stars of vertices of $\Gha$ they lie in. For this, we subdivide the interval $\{0, \ldots, n-1\}$ into disjoint subintervals $\{I_j\}_{j=0}^{m-1}$ such that $\Phi(g^i,s^i_1,s^i_2)$ lies in the star of $g_j$ for all $i \in I_j$ and  $0 \leq j \leq m-1$ (we can assume without loss of generality that no $I_j$ has to be the union of an initial and a final subinterval of $\{0, \ldots, n-1\}$ by rotating $p$ appropriately). Thus $p= \prod_{j = 0}^{m-1} \left (\prod_{i \in I_j} (g^i,s^i_1,s^i_2) \right )$.

To each $j$ we can also associate $s_j \in \sS \cup \sS^{-1}$ so that $g_js_j = g_{j+1}$; these are the generators that $p$ uses in order to move from one star to the next.\\

We modify $p$ into a closed walk $p'$ by inserting pairs of edges that have the same end-vertices and opposite directions each time that $p$ moves from one star to the next. More formally, we define 
\[
p' := \prod_{j = 0}^{m-1} \left( \left (\prod_{i \in I_j} (g^i,s^i_1,s^i_2) \right ) (g_{j+1}, s_j^{-1}, s_{j-1}^{-1}) (g_{j-1}, s_{j-1}, s_{j}) \right ). 
\] 
Notice that by contracting these pairs of opposite edges $(g_{j+1}, s_j^{-1}, s_{j-1}^{-1}) (g_{j-1}, s_{j-1}, s_{j})$ we obtain $p$. Moreover, the sub-walk
\[
\left ( \prod_{i \in I_j} (g^i,s^i_1,s^i_2) \right ) (g_{j+1}, s_j^{-1}, s_{j-1}^{-1})
\]
of $p'$ stays within the star of $g_j$ by definition, and it is a closed walk starting and ending at $[(g_{j-1},g_j)]$. Therefore, it is labelled by one of our relators of the second kind. Easily, $\Phi(p)$ is homotopic to  $\Phi(p')$. Moreover,
\begin{align*}
  \Phi(p') & = \prod_{j = 0}^{m-1} \Phi\left( \left ( \prod_{i \in I_j} (g^i,s^i_1,s^i_2) \right ) (g_{j+1}, s_j^{-1}, s_{j-1}^{-1}) \right ) \Phi(g_{j-1}, s_{j-1}, s_{j})\\ 
& = \prod_{j = 0}^{m-1} \Phi(g_{j-1}, s_{j-1}, s_{j})
\end{align*}
since a closed walk contained in a star is 0-homotopic. 
Now $\prod_{j = 0}^{m-1} (g_{j-1}, s_{j-1}, s_{j})$ is a closed walk in $L(\Gha)$ no three consequtive edges of which are contained in the star of a vertex of $\Gha$ because of the way we chose the $I_j$. This implies that the word $s_0 \ldots s_{m-1}$ labelling this walk is a relation of $\Gha$, and so it can be written as a product of conjugates of relators $\sR$. Recalling that each such relator was admitted as a relator (of the first kind) in $\sR'$, we conclude that the word labelling $p$ can be written as products of conjugates of words in $\sR'$.
\end{proof}

We explicate an example of this below.

\begin{examp}
Consider $D_{10} = \langle a,b \vert a^5, b^2, aba^{-1}b^{-1} \rangle$, which has the  Cayley graph and line graph thereof shown in Figure~\ref{dihedral-example}. 
\showFigTikz{
\begin{tikzpicture}[scale = 0.25]
\node (G-11) at (7.5,17) [circle,fill=black,scale = 0.3] {};
\node (G-12) at (15,12) [circle,fill=black,scale = 0.3] {};
\node (G-13) at (12,3) [circle,fill=black,scale = 0.3] {};
\node (G-14) at (3,3) [circle,fill=black,scale = 0.3] {};
\node (G-15) at (0,12) [circle,fill=black,scale = 0.3] {};
\node (G-21) at (7.5,12) [circle,fill=black,scale = 0.3] {};
\node (G-22) at (10,10) [circle,fill=black,scale = 0.3] {};
\node (G-23) at (9,7) [circle,fill=black,scale = 0.3] {};
\node (G-24) at (6,7) [circle,fill=black,scale = 0.3] {};
\node (G-25) at (5,10) [circle,fill=black,scale = 0.3] {};
\draw[->][red] (G-11) edge (G-12);
\draw[->][red] (G-12) edge (G-13);
\draw[->][red] (G-13) edge (G-14);
\draw[->][red] (G-14) edge (G-15);
\draw[->][red] (G-15) edge (G-11);
\draw[->][red] (G-21) edge (G-22);
\draw[->][red] (G-22) edge (G-23);
\draw[->][red] (G-23) edge (G-24);
\draw[->][red] (G-24) edge (G-25);
\draw[->][red] (G-25) edge (G-21);
\draw[<-][blue] (G-11) edge [bend left] (G-21);
\draw[->][blue] (G-11) edge [bend right] (G-21);
\draw[<-][blue] (G-12) edge [bend left] (G-22);
\draw[->][blue] (G-12) edge [bend right] (G-22);
\draw[<-][blue] (G-13) edge [bend left] (G-23);
\draw[->][blue] (G-13) edge [bend right] (G-23);
\draw[<-][blue] (G-14) edge [bend left] (G-24);
\draw[->][blue] (G-14) edge [bend right] (G-24);
\draw[<-][blue] (G-15) edge [bend left] (G-25);
\draw[->][blue] (G-15) edge [bend right] (G-25);
\end{tikzpicture} \hspace{0.5 in}
\begin{tikzpicture}[scale = 0.2]
\node (G-11) at (7.5,0) [circle,fill=red,scale = 0.3] {};
\node (G-12) at (0,7) [circle,fill=red,scale = 0.3] {};
\node (G-13) at (4,15) [circle,fill=red,scale = 0.3] {};
\node (G-14) at (11,15) [circle,fill=red,scale = 0.3] {};
\node (G-15) at (15,7) [circle,fill=red,scale = 0.3] {};
\node (G-21) at (7.5,5) [circle,fill=red,scale = 0.3] {};
\node (G-22) at (5,7) [circle,fill=red,scale = 0.3] {};
\node (G-23) at (6,9) [circle,fill=red,scale = 0.3] {};
\node (G-24) at (9,9) [circle,fill=red,scale = 0.3] {};
\node (G-25) at (10,7) [circle,fill=red,scale = 0.3] {};
\node (G-31) at (5.5,4) [circle,fill=blue,scale = 0.3] {};
\node (G-32) at (3.7,5.5) [circle,fill=blue,scale = 0.3] {};
\node (G-41) at (3,8.5) [circle,fill=blue,scale = 0.3] {};
\node (G-42) at (4,10.5) [circle,fill=blue,scale = 0.3] {};
\node (G-51) at (6.4,12) [circle,fill=blue,scale = 0.3] {};
\node (G-52) at (8.6,12) [circle,fill=blue,scale = 0.3] {};
\node (G-61) at (11,10.5) [circle,fill=blue,scale = 0.3] {};
\node (G-62) at (12,8.5) [circle,fill=blue,scale = 0.3] {};
\node (G-71) at (11.5,5.5) [circle,fill=blue,scale = 0.3] {};
\node (G-72) at (9.7,4) [circle,fill=blue,scale = 0.3] {};
\draw[->][orange] (G-11) edge [bend left] (G-12);
\draw[->][orange] (G-12) edge [bend left] (G-13);
\draw[->][orange] (G-13) edge [bend left] (G-14);
\draw[->][orange] (G-14) edge [bend left] (G-15);
\draw[->][orange] (G-15) edge [bend left] (G-11);
\draw[->][orange] (G-21) edge [bend left] (G-22);
\draw[->][orange] (G-22) edge [bend left] (G-23);
\draw[->][orange] (G-23) edge [bend left] (G-24);
\draw[->][orange] (G-24) edge [bend left] (G-25);
\draw[->][orange] (G-25) edge [bend left] (G-21);
\draw[->][orange] (G-31) edge [bend left] (G-32);
\draw[->][orange] (G-32) edge [bend left] (G-31);
\draw[->][orange] (G-41) edge [bend left] (G-42);
\draw[->][orange] (G-42) edge [bend left] (G-41);
\draw[->][orange] (G-51) edge [bend left] (G-52);
\draw[->][orange] (G-52) edge [bend left] (G-51);
\draw[->][orange] (G-61) edge [bend left] (G-62);
\draw[->][orange] (G-62) edge [bend left] (G-61);
\draw[->][orange] (G-71) edge [bend left] (G-72);
\draw[->][orange] (G-72) edge [bend left] (G-71);
\draw[<->,dashed][green] (G-31) edge (G-21);
\draw[<->,dashed][purple] (G-31) edge (G-22);
\draw[<->][green] (G-31) edge (G-11);
\draw[<->][purple] (G-31) edge (G-12);
\draw[<->,dashed][green] (G-32) edge (G-11);
\draw[<->,dashed][purple] (G-32) edge (G-12);
\draw[<->][green] (G-32) edge (G-21);
\draw[<->][purple] (G-32) edge (G-22);
\draw[<->,dashed][green] (G-41) edge (G-22);
\draw[<->,dashed][purple] (G-41) edge (G-23);
\draw[<->][green] (G-41) edge (G-12);
\draw[<->][purple] (G-41) edge (G-13);
\draw[<->,dashed][green] (G-42) edge (G-12);
\draw[<->,dashed][purple] (G-42) edge (G-13);
\draw[<->][green] (G-42) edge (G-22);
\draw[<->][purple] (G-42) edge (G-23);
\draw[<->,dashed][green] (G-51) edge (G-23);
\draw[<->,dashed][purple] (G-51) edge (G-24);
\draw[<->][green] (G-51) edge (G-13);
\draw[<->][purple] (G-51) edge (G-14);
\draw[<->,dashed][green] (G-52) edge (G-13);
\draw[<->,dashed][purple] (G-52) edge (G-14);
\draw[<->][green] (G-52) edge (G-23);
\draw[<->][purple] (G-52) edge (G-24);
\draw[<->,dashed][green] (G-61) edge (G-24);
\draw[<->,dashed][purple] (G-61) edge (G-25);
\draw[<->][green] (G-61) edge (G-14);
\draw[<->][purple] (G-61) edge (G-15);
\draw[<->,dashed][green] (G-62) edge (G-14);
\draw[<->,dashed][purple] (G-62) edge (G-15);
\draw[<->][green] (G-62) edge (G-24);
\draw[<->][purple] (G-62) edge (G-25);
\draw[<->,dashed][green] (G-71) edge (G-25);
\draw[<->,dashed][purple] (G-71) edge (G-21);
\draw[<->][green] (G-71) edge (G-15);
\draw[<->][purple] (G-71) edge (G-11);
\draw[<->,dashed][green] (G-72) edge (G-15);
\draw[<->,dashed][purple] (G-72) edge (G-11);
\draw[<->][green] (G-72) edge (G-25);
\draw[<->][purple] (G-72) edge (G-21);
\end{tikzpicture}}{dihedral-example}{$\Cay\langle a,b \vert a^5, b^2, aba^{-1}b^{-1} \rangle$ and its line graph.}
As $K_{\{a,b\}}$ is a single edge, we have $M = \{(1,2)\} =: \{m\}$ with $m_{i,j} \mapsto (1,2)$ for $i,j \in \{1,-1\}$ and $e, e^{-1} \mapsto (1)(2)$ as generators. Define the following function
\[
\chi:
\begin{array}{cccccccccccccc}
aa & \rightarrow & e & & & ab & \rightarrow & m_{1,1} & & & b^{-1}a^{-1} & \rightarrow & m_{-1,-1}\\ 
a^{-1}a^{-1} & \rightarrow & e^{-1} & & & ab^{-1} & \rightarrow & m_{1,-1} & & & ba^{-1} & \rightarrow & m_{1,-1}\\
bb & \rightarrow & e & & & a^{-1}b & \rightarrow & m_{-1,1} & & & b^{-1}a & \rightarrow & m_{-1,1}\\
b^{-1}b^{-1} & \rightarrow & e^{-1} & & & a^{-1}b^{-1} & \rightarrow & m_{-1,-1} & & & ba & \rightarrow & m_{1,1}.\\
\end{array}
\]
The original relators $a^5, b^2, aba^{-1}b^{-1}$ are thus translated into relators in the resulting \pp\ as follows: $a^5 \rightarrow e^5 \in \sR_a$, $b^2 \rightarrow e^2 \in \sR_b$ and $aba^{-1}b^{-1} \rightarrow m_{1,1}m_{1,-1}m_{-1,-1}m_{-1,1} \in \sR_a$. Lastly, we add relations of the second kind shown in Figure~\ref{star-relations}, 
\showFigTikz{
\begin{tikzpicture}[scale = 0.2]
\node (M) at (0,0) [circle,fill=black,scale = 0.5] {};
\node (oa) at (-3,3) [circle,fill=white,scale = 0.1] {};
\node (ia) at (3,3) [circle,fill=white,scale = 0.1] {};
\node (ob) at (-3,-3) [circle,fill=white,scale = 0.1] {};
\node (ib) at (3,-3) [circle,fill=white,scale = 0.1] {};
\draw[->][red] (ia) edge (M);
\draw[->][red] (M) edge (oa);
\draw[->][blue] (ib) edge (M);
\draw[->][blue] (M) edge (ob);
\draw[->] (ia) edge [bend left] (oa);
\draw[->] (oa) edge [bend left] (ob);
\draw[->] (ob) edge [bend right] (ia);
\end{tikzpicture} \hspace{1.5 in}
\begin{tikzpicture}[scale = 0.2]
\node (M) at (0,0) [circle,fill=black,scale = 0.5] {};
\node (oa) at (-3,3) [circle,fill=white,scale = 0.1] {};
\node (ia) at (3,3) [circle,fill=white,scale = 0.1] {};
\node (ob) at (-3,-3) [circle,fill=white,scale = 0.1] {};
\node (ib) at (3,-3) [circle,fill=white,scale = 0.1] {};
\draw[->][red] (ia) edge (M);
\draw[->][red] (M) edge (oa);
\draw[->][blue] (ib) edge (M);
\draw[->][blue] (M) edge (ob);
\draw[->] (ia) edge [bend left] (oa);
\draw[->] (oa) edge [bend right] (ib);
\draw[->] (ib) edge [bend left] (ia);
\end{tikzpicture}\\
$aaa^{-1}bb^{-1}a^{-1} \rightarrow em_{-1,1}m_{-1,-1}$ \hspace{0.3 in} $aaa^{-1}b^{-1}ba^{-1} \rightarrow em_{-1,-1}m_{1,-1}$\\
\begin{tikzpicture}[scale = 0.2]
\node (M) at (0,0) [circle,fill=black,scale = 0.5] {};
\node (oa) at (-3,3) [circle,fill=white,scale = 0.1] {};
\node (ia) at (3,3) [circle,fill=white,scale = 0.1] {};
\node (ob) at (-3,-3) [circle,fill=white,scale = 0.1] {};
\node (ib) at (3,-3) [circle,fill=white,scale = 0.1] {};
\draw[->][red] (ia) edge (M);
\draw[->][red] (M) edge (oa);
\draw[->][blue] (ib) edge (M);
\draw[->][blue] (M) edge (ob);
\draw[->] (ia) edge [bend right] (ob);
\draw[->] (ob) edge [bend left] (ib);
\draw[->] (ib) edge [bend left] (ia);
\end{tikzpicture} \hspace{1.5 in}
\begin{tikzpicture}[scale = 0.2]
\node (M) at (0,0) [circle,fill=black,scale = 0.5] {};
\node (oa) at (-3,3) [circle,fill=white,scale = 0.1] {};
\node (ia) at (3,3) [circle,fill=white,scale = 0.1] {};
\node (ob) at (-3,-3) [circle,fill=white,scale = 0.1] {};
\node (ib) at (3,-3) [circle,fill=white,scale = 0.1] {};
\draw[->][red] (ia) edge (M);
\draw[->][red] (M) edge (oa);
\draw[->][blue] (ib) edge (M);
\draw[->][blue] (M) edge (ob);
\draw[->] (oa) edge [bend left] (ob);
\draw[->] (ob) edge [bend left] (ib);
\draw[->] (ib) edge [bend right] (oa);
\end{tikzpicture}\\
$abb^{-1}b^{-1}ba^{-1} \rightarrow m_{1,1}e^{-1}m_{1,-1}$ \hspace{0.3 in} $a^{-1}bb^{-1}b^{-1}ba \rightarrow m_{-1,1}e^{-1}m_{1,1}$}{star-relations}{Example of relations of the second kind}
which are enough to generate the rest of the relations. The resulting \pp\ is
\[
\langle a,b \vert \begin{array}{c}\sI =  m_{1,1}, m_{1,-1}, m_{-1,1}, m_{-1,-1}\\ \sU = e, e^{-1} \end{array} \rightarrow \begin{array}{c} (12)\\ (1)(2) \end{array} \vert \begin{array}{c} \{e^5, m_{1,1}m_{1,-1}m_{-1,-1}m_{-1,1}, em_{-1,1}m_{-1,-1},\\  em_{-1,-1}m_{1,-1}, m_{1,1}e^{-1}m_{1,-1}, m_{-1,1}e^{-1}m_{1,1}\}, \{e^2\} \end{array} \rangle.
\]
\end{examp}
\section{A Cubic 2-ended vertex transitive graph which is not Cayley} \label{sec 2-ended}

In this section we construct an example of a 2-ended cubic vertex transitive graph which is not a Cayley graph. This answers a question of Watkins \cite{Wa90} also appearing in \cite{GL16}.

\medskip
Let $\Gha$ be the graph with $V(\Gha) = \{v_{n,k} \vert n \in \bZ, k \in \bZ/10\bZ\}$ and
\[
 E(\Gha) = \{v_{n,k} v_{n,k+1}\ \vert \ n \in \bZ, \ k \in \bZ/10\bZ\} \cup \{ v_{n,2k+1} v_{n+1,4k+2} \ \vert \ n \in \bZ, \ k \in \bZ/10\bZ\}.
\]
By construction, $\Gha$ is a cubic graph. A useful way to think of this graph is as a 2-way infinite stack of layers  $L_n := \{v_{n,k} \vert k \in \bZ/10\bZ\}$. Each $L_n$ spans a 10-cycle, and between any two layers $L_n$ and $L_{n+1}$ there is a Petersen-graph-like structure. 

\begin{cla}
	$\Gha$ is 2-ended.	
\end{cla}
\begin{proof}
We will show that $\Gha$ is quasi-isometric to $\Ghb := \Cay(\bZ, \{1\})$, hence 2-ended. Let $d_{\Gha}$ and $d_{\Ghb}$ be the path metric in $\Gha$ and $\Ghb$ respectively. Our  quasi-isometry is the map
$f: \Gha \rightarrow \Ghb$ defined by  $f(v_{n,k}) := n$.

It is straightforward to check that $d_{\Gha}(v_{n,0}, v_{n+1,0}) = 2$ and $d_{\Gha}(v_{n,k},v_{n,0}) \leq k$  by the definition. Now for any two vertices $v_{n,k}, v_{n',k'} \in V(\Gha)$ where $n \leq n'$, we have
\begin{align*}
d_{\Gha}(v_{n,k},v_{n',k'}) \leq & d_{\Gha}(v_{n,k},v_{n,0}) + \sum_{n \leq i < n'} d_{\Gha}(v_{i,0},v_{i+1,0}) + d_{\Gha}(v_{n',0},v_{n',k}) & \mbox{by the triangle inequality}\\
\leq & 2 (n-n') + (k + k') & \mbox{by the two facts above}\\
\leq & 2 d_{\Ghb}(f(v_{n,k}), f(v_{n',k'})) + 20 & \mbox{by the definition of }f.
\end{align*} 
Another straightforward consequence of the definition of $\Gha$ is that $d_{\Gha}(v_{n,k}, v_{n',k'}) \geq n - n'$, which combined with the above inequality yields
\[
d_{\Gha}(v_{n,k},v_{n',k'})/2 - 20 \leq d_{\Ghb}(f(v_{n,k}), f(v_{n',k'})) = n - n' \leq d_{\Gha}(v_{n,k}, v_{n',k'}).
\]
As $f$ is surjective, this means that it is a quasi-isometry. Since the number of ends of a graph is invariant under quasi-isometry, $\Gha$ has 2-ends as  $\bZ$ does.
\end{proof}

\begin{cla}
	$\Gha$ is vertex transitive.
\end{cla}
\begin{proof}

We introduce the following two maps $\sigma, \tau : V(\Gha) \to V(\Gha)$, which will allow us to map any vertex of $\Gha$ to any other:
\[
\sigma(v_{n,k}) = v_{n+1,k} \ \mbox{ and } \ \tau(v_{n,k}) = \begin{cases} v_{-n, k+1} & \mbox{ if } n \equiv 0 \mbox{ (mod }4\mbox{)}\\ v_{-n, 3-k} & \mbox{ if } n \equiv 1 \mbox{ (mod }4\mbox{)}\\v_{-n, k+9} & \mbox{ if } n \equiv 2 \mbox{ (mod }4\mbox{)}\\v_{-n, 7-k} & \mbox{ if } n \equiv 3 \mbox{ (mod }4\mbox{)} \end{cases}.
\] 
Intuitively $\sigma$ just shifts all the layers up by 1, so $L_n$ is mapped to $L_{n+1}$ keeping the positions and orientations of the 10-cycles the same. Whereas $\tau$ rotates the 10-cycle on $L_0$ by one position, which flips the stack of layers by mapping $L_n$ to $L_{-n}$, and inverts the orientation of the 10-cycles at odd numbered layers.  
It is easy to see that $\tau$ and $\sigma$ preserve edges of the form $(v_{n,k},v_{n,k+1})$. Moreover, it is easy to see that $\sigma$ preserves edges of the form $(v_{n,2k+1}, v_{n+1,4k+2})$, we now check that $\tau$ also preserves these edges:
\begin{align*}
\tau : (v_{n,2k+1}, & v_{n+1,4k+2}) \mapsto \\
&\begin{array}{cccc} (v_{-n,2k+2},v_{-(n+1), 1 - 4k}) & = & (v_{-(n+1) + 1,4(-2k) + 2},v_{-(n+1), 2(-2k) + 1}) &  \mbox{ if } n \equiv 0 \mbox{ (mod }4\mbox{)}\\
(v_{-n,2 - 2k},v_{-(n+1), 4k + 1}) & = & (v_{-(n+1) + 1,4(2k) + 2},v_{-(n+1), 2(2k) + 1}) &  \mbox{ if } n \equiv 1 \mbox{ (mod }4\mbox{)}\\
(v_{-n,2k},v_{-(n+1), 5 - 4k}) & = & (v_{-(n+1) + 1,4(2-2k) + 2},v_{-(n+1), 2(2-2k) + 1}) &  \mbox{ if } n \equiv 2 \mbox{ (mod }4\mbox{)}\\
(v_{-n,6 - 2k},v_{-(n+1), 4k + 3}) & = & (v_{-(n+1) + 1,4(2k + 1) + 2},v_{-(n+1), 2(2k + 1) + 1}) &  \mbox{ if } n \equiv 3 \mbox{ (mod }4\mbox{)},\end{array}
\end{align*}

where we used that $8 \equiv -2$ (mod $10$). By changing the order of the vertices in the right hand side we see that these are indeed edges of $\Gha$ of the form $(v_{n,2k+1}, v_{n+1,4k+2})$. Thus we have checked that $\sigma, \tau \in \Aut(\Gha)$. Let 
$\Gpa := \langle \sigma, \tau \rangle \leq Aut(\Gha)$
be the group of automorphisms of $\Gha$ generated by  $ \sigma$ and $\tau$.
For any two vertices $v_{n,k}, v_{n',k'} \in V(\Gha)$, we have 
\begin{align*}
\sigma^{n'}\tau^{k'-k}\sigma^{-n}(v_{n,k}) = & \sigma^{n'}\tau^{k'-k}(v_{0,k})\\
= & \sigma^{n'}(v_{0,k'})\\
= & v_{n',k'}
\end{align*}
and so $\Gpa$ acts transitively on $V(\Gha)$. This proves Claim~2.
\end{proof}

\begin{cla} \label{not-regular}
	$\Gpa$ is not a regular subgroup of $\Aut(\Gha)$.
\end{cla}
\begin{proof}

Observe
\begin{align*}
\tau^{-3}\sigma\tau\sigma (v_{0,k}) & = \tau^{-3} \sigma \tau (v_{1,k})\\
& = \tau^{-3} \sigma (v_{-1, 3 - k})\\ & = \tau^{-3} (v_{0,3 - k})\\ & = v_{0,- k}.
\end{align*}
This implies $\tau^{-3}\sigma\tau\sigma(v_{0,0}) = v_{0,0}$, yet $\tau^{-3}\sigma\tau\sigma \not = 1_{\Gha}$ as $\tau^{-3}\sigma\tau\sigma(v_{0,1}) = v_{0,9}$. Thus the action of $G$ on $\Gha$ is not free, proving Claim~3.
\end{proof}

\medskip
We remind the reader the $n$th layer is denoted $L_n = \{v_{n,k} \vert \ k \in \bZ/10\bZ \}$ for $n \in \bZ$ and we define the partition $\sC := \{L_n\ \vert\ n \in \bZ\}$ of $V(\Gha)$.

\begin{cla} \label{fix-one-cycle}
	Let $\phi \in \Aut(\Gha)$ satisfy $\phi(L_a) = L_b$. If for some $\chi \in \Aut(\Gha)$ we have $\phi(x)=\chi(x)$ for every $x\in L_a$,  then  $\phi=\chi$. Moreover, $\phi$ preserves the partition $\sC := \{L_n\ \vert\ n \in \bZ\}$.
\end{cla}
\begin{proof}

Observe that $\phi(v_{a+1,2k})$ and $\phi(v_{a-1,2k+1})$ are uniquely determined by $\phi(x), x\in L_a$ because each vertex in $L_b$ has exactly one neighbour outside $L_b$. This in turn uniquely determines $\phi(v_{a+1,2k+1})$ and $\phi(v_{a-1,2k})$ by a similar argument. Continuing like this, we see that $\phi(\{v_{a + \epsilon,k} \vert k \in \bZ/10\bZ\}) = \{v_{b \pm \epsilon,k} \vert k \in \bZ/10\bZ\}$ for $\epsilon \in \{-1,1\}$. By an inductive argument, this uniquely determines $\phi$, and moreover $\phi$ preserves $\sC$.
\end{proof}

\begin{cla} \label{cycle-pres}
	Any $\phi \in \Aut(\Gha)$ preserves the partition $\sC = \{L_n \vert n \in \bZ\}$.
\end{cla}
\begin{proof}
 
Suppose $\phi$ does not fix $\sC$. By Claim \ref{fix-one-cycle} we have $\phi(L_0) \not = L_a$ for every $a \in \bZ$. Let
\[
n := \min\{n' \in \bZ \vert v_{n',k} \in \phi(L_0) \}.
\]
Thus there exist $l,k \in \bZ/10\bZ$ such that 
\[
\phi : \begin{array}{c} v_{0,l}\\ v_{0,l\pm1}\\ v_{0,l\pm2}\\ v_{0,l \pm 3} \end{array} \mapsto \begin{array}{c} v_{n,2k-1}\\ v_{n, 2k}\\ v_{n, 2k+1}\\ v_{n+1,4k+2} \end{array}.
\] 
Let $N(v_{n,k})$ denote the neigbourhood of $v_{n,k}$ in $\Gha$.  
Let $v_{\epsilon,a} \in N(v_{0,l\pm1})$ and $v_{-\epsilon,b} \in N(v_{0,l\pm2})$ for $\epsilon \in \{-1,1\}$. Then 
\begin{align*}
\phi(v_{\epsilon,a}) &\in N(\phi(v_{0,l\pm1})) \backslash \{\phi(v_{0,l}),\phi(v_{0,l\pm2})\}\\ 
& = \{v_{n-1,a'}\}\\ 
\phi(v_{-\epsilon,b}) &\in N(\phi(v_{0,l\pm2})) \backslash \{\phi(v_{0,l\pm1}),\phi(v_{0,l\pm3})\}\\ 
& = \{v_{n,2k+2}\}.
\end{align*}
Note that $\phi(v_{\epsilon,a}) = v_{n-1,a'}$ and $\phi(v_{-\epsilon,b}) = v_{n,2k+2}$ lie in the same connected component of $\Gha \backslash \phi(L_0)$ by the definition of $n$. However, $v_{\epsilon,a}$ and $v_{-\epsilon,b}$ lie in different connected components of $\Gha \backslash L_0$. This contradicts that $\phi$ is an automorphism of $\Gha$, and so our claim is proved.
\end{proof}

\begin{cla} \label{unique-determined}
	Any $\phi \in \Aut(\Gha)$ is uniquely determined by $\phi(v_{0,1})$ and $\phi(v_{0,2})$.
\end{cla}
\begin{proof}

Assume $\phi(v_{0,1}) = v_{a,b}$. Then by Claim \ref{cycle-pres}, $\phi(v_{0,2}) \in N(v_{a,b}) \cap \{v_{a,k} \vert k \in \bZ/10\bZ\} = \{v_{a,b+1},v_{a,b-1}\}$. In either case, by Claim \ref{cycle-pres} this uniquely determines $\phi(v_{0,k})$ for $k \in \bZ/10\bZ$. By Claim \ref{fix-one-cycle}, this uniquely determines $\phi$.
\end{proof}

We remark that this implies $\mbox{Stab}_{\Aut(\Gha)}(v_{0,0}) = \langle \tau^{-3}\sigma\tau\sigma \rangle = \bZ/2\bZ$. So $\langle \tau, \sigma \rangle = Aut(\Gamma)$.

\begin{cla}
	$\Gha$ is not a Cayley graph.
\end{cla}
\begin{proof}

Let $T \leq \Aut(\Gha)$ be a transitive subgroup. Thus we can find automorphisms $\sigma', \tau' \in T$ such that $\sigma'(v_{0,1}) = v_{1,1}$ and $\tau'(v_{0,1}) = v_{0,2}$. By Claim \ref{cycle-pres}, $T$ preserves the partition $\sC$. So either $\tau'(v_{0,2}) = v_{0,3}$ and $\tau' = \tau$ or $\tau'(v_{0,2}) = v_{0,1}$ and  
\[
\tau'(v_{n,k}) = \sigma \tau \sigma(v_{n,k}) = \tilde{\tau}(v_{n,k}) := \begin{cases} v_{-n, 3 - k} & \mbox{ if } n \equiv 0 \mbox{ (mod }4\mbox{)}\\ v_{-n, k-1} & \mbox{ if } n \equiv 1 \mbox{ (mod }4\mbox{)}\\v_{-n, 7-k} & \mbox{ if } n \equiv 2 \mbox{ (mod }4\mbox{)}\\v_{-n, k+1} & \mbox{ if } n \equiv 3 \mbox{ (mod }4\mbox{)} \end{cases}
\] 
by Claim \ref{unique-determined}. Similarly, either $\sigma'(v_{0,2}) = v_{1,2}$ and $\sigma' = \sigma$ or $\sigma'(v_{0,2}) = v_{1,0}$ and
\[
\sigma'(v_{n,k}) = \sigma \tau^{-1} \sigma \tau \sigma(v_{n,k}) = \tilde{\sigma}(v_{n,k}) := \begin{cases} v_{n+1, 2-k} & \mbox{ if } n \equiv 0 \mbox{ (mod }4\mbox{)}\\ v_{n+1, 4-k} & \mbox{ if } n \equiv 1 \mbox{ (mod }4\mbox{)}\\v_{n+1, 8-k} & \mbox{ if } n \equiv 2 \mbox{ (mod }4\mbox{)}\\v_{n+1, 6-k} & \mbox{ if } n \equiv 3 \mbox{ (mod }4\mbox{)} \end{cases}.
\] 
by Claim \ref{unique-determined}. By Claim \ref{not-regular}, if $\{\sigma', \tau'\} = \{\sigma, \tau\}$ then $T$ is not regular. If $\{\sigma', \tau'\} = \{\tilde{\sigma}, \tau\}$ then
\begin{align*}
\tau\tilde{\sigma}\tau\tilde{\sigma} (v_{0,k}) & = \tau \tilde{\sigma} \tau (v_{1,2-k})\\
& = \tau \tilde{\sigma} (v_{-1, k + 1})\\ 
& = \tau (v_{0,5 - k})\\ 
& = v_{0,6 - k}
\end{align*}
giving $\tau\tilde{\sigma}\tau\tilde{\sigma}(v_{0,3}) = v_{0,3}$ yet $\tau\tilde{\sigma}\tau\tilde{\sigma} \not = 1_{\Gha}$. Similarly, if $\{\sigma', \tau'\} = \{\sigma, \tilde{\tau}\}$ then
\begin{align*}
\tilde{\tau}\sigma\tilde{\tau}\sigma (v_{0,k}) & = \tilde{\tau}\sigma\tilde{\tau} (v_{1,k})\\
& = \tilde{\tau}\sigma (v_{-1, k-1})\\ 
& = \tilde{\tau} (v_{0,k-1})\\ 
& = v_{0,4 - k}
\end{align*}
giving $\tilde{\tau}\sigma\tilde{\tau}\sigma(v_{0,2}) = v_{0,2}$ yet $\tilde{\tau}\sigma\tilde{\tau}\sigma \not = 1_{\Gha}$. Lastly, if $\{\sigma', \tau'\} = \{\tilde{\sigma}, \tilde{\tau}\}$ then 
\begin{align*}
\tilde{\tau}\tilde{\sigma}\tilde{\tau}\tilde{\sigma} (v_{0,k}) & = \tilde{\tau} \tilde{\sigma} \tilde{\tau} (v_{1,2-k})\\
& = \tilde{\tau} \tilde{\sigma} (v_{-1, 1-k})\\ 
& = \tilde{\tau} (v_{0,5 + k})\\ 
& = v_{0,-2 - k}
\end{align*}
giving $\tilde{\tau}\tilde{\sigma}\tilde{\tau}\tilde{\sigma}(v_{0,9}) = v_{0,9}$ yet $\tilde{\tau}\tilde{\sigma}\tilde{\tau}\tilde{\sigma} \not = 1_{\Gha}$. Therefore $T$ is never regular, and so $\Gha$ is not a Cayley graph.
\end{proof}

\medskip
Combining the above claims we deduce 

\paragraph{\textbf{Theorem \ref{cubic-2-ended-vertex-transitive-example}}}
	$\Gha$ is a cubic 2-ended vertex transitive graph which is not a Cayley graph.\\
	\qed
	
During the course of this proof we have shown $Aut(\Gamma) = \langle \sigma, \tau \rangle$. Some further study, for which we thank Derek Holt, can show that \[ Aut(\Gamma) = \langle  \sigma, \tau \vert \tau^{10}, (\tau^{-1} \sigma \tau \sigma)^2, \sigma^{-1} \tau^2 \sigma\tau^{-4}, (\sigma^{-2}\tau)^2 \rangle. \] 

\comment{	
However there are still questions about this graph.

\begin{que}
Is $\Aut(\Gha) = \langle \sigma, \tau \vert \tau^{10}, (\tau^{-1}\sigma\tau\sigma)^2, [\sigma, \tau^2] \rangle$?
\end{que} 

Let $\phi \in \Aut(\Gha)$ with $\phi(v_{0,1}) = v_{a,b}$ and $\phi(v_{0,2}) = v_{a,b+\epsilon}$ with $\epsilon \in \{1,-1\}$ then $\phi = \tau^{b-1}\sigma^{a}(\tau^{-1}\sigma\tau\sigma)^{(1-\epsilon)/2}$.

Define presentation $P = \langle \ \{0,1\} \ \vert \ \{a\} \ \vert \ \{b\} \ \vert \ a, b \mapsto (01) \ \vert \ \{a^{10}, a^2ba^{4}b\}, \{a^2ba^{-4}b\} \ \rangle$, we will sketch that $\Gha = \mbox{\Spl}(P)$. Let $\sC(P)$ be the presentation complex, label the directed edges 
\[
\overrightarrow{E}(\sC(P)) = \{a(0,1), a^{-1}(0,1), b(0,1), a(1,0), a^{-1}(1,0), b(1,0)\}
\] 
where $\epsilon(i,j)$ is the edge coloured $\epsilon$ going from $i$ to $j$, these indices will be taken modulo 2. Define cover $\zeta : \Gha \rightarrow \sC(P)$, where 
\[
\zeta : \begin{array}{c}
v_{n,k}\\
(v_{n,k},v_{n,k+1})\\
(v_{n,2k+1},v_{n+1,4k+2})
\end{array} \mapsto \begin{array}{c}
k \ (\mbox{mod } 2)\\
a(k, k + 1)\\
b(1,0)
\end{array} \ \ \mbox{where} \ \ \zeta : \begin{array}{c}
(v_{n,k+1}, v_{n,k})\\
(v_{n+1,4k+2},v_{n,2k+1})
\end{array} \mapsto \begin{array}{c}
a^{-1}(k+1, k)\\
b(0,1)
\end{array}.
\] 
Observe the relations hold as the paths 
\[
\zeta : \begin{array}{c}
v_{n,k}, v_{n,k+1}, \ldots v_{n, k + 9}, v_{n, k}\\
v_{n+1,2k}, v_{n+1, 2k+1}, v_{n+1,2k+2}, v_{n, 6k+1}, v_{n, 6k+2}, \ldots v_{n, 6k+5}, v_{n, 2k}\\
v_{n,2k+1}, v_{n,2k+2}, v_{n,2k+3}, v_{n+1,4k+6}, v_{n+1,2k+5}, \ldots, v_{n+1, 4k+2}, v_{n,2k+1}
\end{array} \mapsto
\begin{array}{c}
(a(k, k + 1)a(k + 1, k))^5\\
a(0,1)a(1,0)b(0,1)(a(1,0)a(0,1))^2b(1,0)\\
a(1,0)a(0,1)b(1,0)(a^{-1}(0,1)a^{-1}(1,0))^2b(0,1)\\
\end{array}
\]
which we leave for the interested reader to deduce that by inserting these 2-cells $\Gha$ becomes simply connected (note here we don't need the $a^2ba^{-4}b$ relation). However, there is no simplicial map $\phi : \sC(P) \rightarrow \sC(P)$ with $\phi(0) = 1$. However this changes when we take the 2-fold cover of the \pp\ complex $\sC(P)$ to give us the \pp\
\[
P' = \langle \ {0,1,2,3} \ \vert \ \{a\} \ \vert \ \{b\} \ \vert \ a \mapsto (01)(23), \ b \mapsto (03)(12) \ \vert \ \{a^{10}, a^2ba^{4}b\}, \{a^2ba^{-4}b\}, \{a^{10}, a^2ba^{4}b\}, \{a^2ba^{-4}b\} \rangle.
\]
We have cover $\eta: \sC(P') \rightarrow \sC(P)$ by $\eta : i \mapsto i$ (mod $2$). Where the cover $\zeta: \Gha \rightarrow \sC(P)$ factors through $\eta$ by the map $\mu: \Gha \rightarrow \sC(P')$ where $\mu(v_{n,k}) = 2n' + k'$ where $n'$ and $k'$ are $n$ and $k$ reduced modulo 2. Where $\mu$ maps edges similarly to $\zeta$. The $\sC(P')$ has two interesting automorphisms, $\widehat{\sigma} : \sC(P') \rightarrow \sC(P')$ which swaps $0 \leftrightarrow 2$ and $1 \leftrightarrow 3$ but preserves edge labels. The less trivial map $\widehat{\tau} : \sC(P') \rightarrow \sC(P')$ which swaps $0 \leftrightarrow 1$ and $2 \leftrightarrow 3$ and preserves labels of edges between $0$ and $1$ ($\widehat{\tau}: a(0,1) \mapsto a(1,0)$) though reverse labels of $2$ and $3$ ($\widehat{\tau}: a(2,3) \mapsto a^{-1}(3,2)$). Then one can observe $\widehat{\tau}$'s action on the 2-cells, both $a^{10}$ 2-cells stay fixed but between $2$ and $3$ the direction is reversed. We obtain a slightly less trivial action on the other 2-cells:
\[
\begin{array}{c}
a(0,1)a(1,0)b(0,3)(a(3,2)a(2,3))^2b(3,0)\\
a(1,0)a(0,1)b(1,2)(a^{-1}(2,3)a^{-1}(3,2))^2b(2,1)\\
a(2,3)a(3,2)b(2,1)(a(1,0)a(0,1))^2b(1,2)\\
a(3,2)a(2,3)b(3,0)(a^{-1}(0,1)a^{-1}(1,0))b(0,3)
\end{array} \mapsto \begin{array}{c} a(1,0)a(0,1)b(1,2)(a^{-1}(2,3)a^{-1}(3,2))^2b(2,1)\\
a(0,1)a(1,0)b(0,3)(a(3,2)a(2,3))^2b(3,0)\\
a^{-1}(3,2)a^{-1}(2,3)b(3,0)(a(0,1)a(1,0))^2b(0,3)\\
a^{-1}(2,3)a^{-1}(3,2)b(2,1)(a^{-1}(1,0)a^{-1}(0,1))^2b(1,2) \end{array}
\] 
where $a^{-1}(3,2)a^{-1}(2,3)b(3,0)(a(0,1)a(1,0))^2b(0,3)$ and 
$a^{-1}(2,3)a^{-1}(3,2)b(2,1)(a^{-1}(1,0)a^{-1}(0,1))^2b(1,2)$ are the 2-cells $a(3,2)a(2,3)b(3,0)(a^{-1}(0,1)a^{-1}(1,0))b(0,3)$ and $a(2,3)a(3,2)b(2,1)(a(1,0)a(0,1))^2b(1,2)$ ran in reverse. Using Proposition \ref{simplicial-automorphisms-lift} we obtain that $\Gha$ is vertex transitive. The suggestive notation being correct, $\widehat{\sigma}, \widehat{\tau} \in Aut(\sC(P'))$ lifting to $\sigma, \tau \in \Aut(\Gha)$. 

This procedure could be generalised, though we don't do this here.

\begin{conj}
	Let $\Gha$ be a vertex transitive graph, with \pp\ $P$ such that $\Gha = \mbox{\Spl}(P)$ and transitive group $\Gpa \leq Aut_{c-loc}(\Gha)$. Then there exists \pp\ $P'$ such that $\Gha = \mbox{\Spl}(P')$ with cover $\eta: \Gha \rightarrow \sC_{P'}$. Moreover, there exists homomorphism $\phi: \Gpa \rightarrow \Aut(\sC_{P'})$ where the action of $\Gpa$ commutes with the cover $\eta$, i.e. $\eta(g \cdot x) = \phi(g) \cdot \eta(x)$ with $x \in V(\Gha) \cup \overrightarrow{E}(\Gha)$.  
\end{conj} 

The authors believe the trick to proving this will come down to taking a sufficiently large cover so that the different blocks of the action of $\Gpa$ on $\overrightarrow{E}(\Gha)$ lift to different edges in the presentation. 
}

\comment{
\section{Triangulated Petersen graph}

Here we pay special attention to one class of graphs, the triangle replaced Petersen graph. We remind the reader of this graph. For $TriP(n,k)$ let 
\begin{align*}
V(TriP(n,k)) &:= \{x_{i,j}, y_{i,j} \ \vert \ i \in \bZ /n\bZ, \ j \in \bZ /3\bZ \}\mbox{ and}\\ 
E(TriP(n,k)) &:= \{ (x_{i,0}, x_{i+1,1}), (x_{i,2}, y_{i,0}), (y_{i,1}, y_{i+k,2}), (x_{i,j}, x_{i,j+1}), (y_{i,j}, y_{i,j+1}) \ \vert \ i \in \bZ /n \bZ, \ j \in \bZ /3 \bZ \}.
\end{align*}

\begin{examp}
	The triangle replaced Petersen graph $Tri(5,2)$ is the \pcg\ of the following \pp\ with global inverses.
	\showFigTikz{
		\begin{tikzpicture}[scale = 0.2]
		\node (G-11) at (-23,0) [circle,fill=black,scale = 0.3, label=left:{$2$}] {};
		\node (G-12) at (-26,3) [circle,fill=black,scale = 0.3, label=right:{$1$}] {};
		\node (G-13) at (-26,-3) [circle,fill=black,scale = 0.3, label=right:{$0$}] {};
		\node (G-21) at (-19,0) [circle,fill=black,scale = 0.3, label=right:{$3$}] {};
		\node (G-22) at (-16,3) [circle,fill=black,scale = 0.3, label=left:{$4$}] {};
		\node (G-23) at (-16,-3) [circle,fill=black,scale = 0.3, label=left:{$5$}] {};
		\node at (-12,0) [circle,fill=white,scale = 0, label=above:{$\leftarrow$}] {};
		\draw[<-, red] (G-11) edge (G-12);
		\draw[<-, red] (G-12) edge (G-13);
		\draw[<-, red] (G-13) edge (G-11);
		\draw[->, red] (G-21) edge (G-22);
		\draw[->, red] (G-22) edge (G-23);
		\draw[->, red] (G-23) edge (G-21);
		\draw[<->, blue] (G-12) edge [bend right] (G-13);
		\draw[<->, blue] (G-11) edge (G-21);
		\draw[<->, blue] (G-22) edge [bend left] (G-23);
		\node (G-111) at (-5,-6) [circle,fill=black,scale = 0.3] {};
		\node (G-112) at (-5,-8) [circle,fill=black,scale = 0.3] {};
		\node (G-113) at (-7,-6) [circle,fill=black,scale = 0.3] {};
		\node (G-121) at (-7,2) [circle,fill=black,scale = 0.3] {};
		\node (G-122) at (-9,1) [circle,fill=black,scale = 0.3] {};
		\node (G-123) at (-9,3) [circle,fill=black,scale = 0.3] {};
		\node (G-131) at (0,8) [circle,fill=black,scale = 0.3] {};
		\node (G-132) at (-1,10) [circle,fill=black,scale = 0.3] {};
		\node (G-133) at (1,10) [circle,fill=black,scale = 0.3] {};
		\node (G-141) at (7,2) [circle,fill=black,scale = 0.3] {};
		\node (G-142) at (9,3) [circle,fill=black,scale = 0.3] {};
		\node (G-143) at (9,1) [circle,fill=black,scale = 0.3] {};
		\node (G-151) at (5,-6) [circle,fill=black,scale = 0.3] {};
		\node (G-152) at (7,-6) [circle,fill=black,scale = 0.3] {};
		\node (G-153) at (5,-8) [circle,fill=black,scale = 0.3] {};
		\node (G-211) at (-3,-4) [circle,fill=black,scale = 0.3] {};
		\node (G-212) at (-1,-3) [circle,fill=black,scale = 0.3] {};
		\node (G-213) at (-3,-2) [circle,fill=black,scale = 0.3] {};
		\node (G-221) at (-5,2) [circle,fill=black,scale = 0.3] {};
		\node (G-222) at (-4,0) [circle,fill=black,scale = 0.3] {};
		\node (G-223) at (-3,2) [circle,fill=black,scale = 0.3] {};
		\node (G-231) at (0,6) [circle,fill=black,scale = 0.3] {};
		\node (G-232) at (-1,4) [circle,fill=black,scale = 0.3] {};
		\node (G-233) at (1,4) [circle,fill=black,scale = 0.3] {};
		\node (G-241) at (5,2) [circle,fill=black,scale = 0.3] {};
		\node (G-242) at (3,2) [circle,fill=black,scale = 0.3] {};
		\node (G-243) at (4,0) [circle,fill=black,scale = 0.3] {};
		\node (G-251) at (3,-4) [circle,fill=black,scale = 0.3] {};
		\node (G-252) at (3,-2) [circle,fill=black,scale = 0.3] {};
		\node (G-253) at (1,-3) [circle,fill=black,scale = 0.3] {};
		\draw[->][red] (G-111) edge (G-112);
		\draw[->][red] (G-112) edge (G-113);
		\draw[->][red] (G-113) edge (G-111);
		\draw[->][red] (G-121) edge (G-122);
		\draw[->][red] (G-122) edge (G-123);
		\draw[->][red] (G-123) edge (G-121);
		\draw[->][red] (G-131) edge (G-132);
		\draw[->][red] (G-132) edge (G-133);
		\draw[->][red] (G-133) edge (G-131);
		\draw[->][red] (G-141) edge (G-142);
		\draw[->][red] (G-142) edge (G-143);
		\draw[->][red] (G-143) edge (G-141);
		\draw[->][red] (G-151) edge (G-152);
		\draw[->][red] (G-152) edge (G-153);
		\draw[->][red] (G-153) edge (G-151);
		\draw[<-][red] (G-211) edge (G-212);
		\draw[<-][red] (G-212) edge (G-213);
		\draw[<-][red] (G-213) edge (G-211);
		\draw[<-][red] (G-221) edge (G-222);
		\draw[<-][red] (G-222) edge (G-223);
		\draw[<-][red] (G-223) edge (G-221);
		\draw[<-][red] (G-231) edge (G-232);
		\draw[<-][red] (G-232) edge (G-233);
		\draw[<-][red] (G-233) edge (G-231);
		\draw[<-][red] (G-241) edge (G-242);
		\draw[<-][red] (G-242) edge (G-243);
		\draw[<-][red] (G-243) edge (G-241);
		\draw[<-][red] (G-251) edge (G-252);
		\draw[<-][red] (G-252) edge (G-253);
		\draw[<-][red] (G-253) edge (G-251);
		\draw[<->][blue] (G-113) edge (G-122);
		\draw[<->][blue] (G-111) edge (G-211);
		\draw[<->][blue] (G-213) edge (G-232);
		\draw[<->][blue] (G-123) edge (G-132);
		\draw[<->][blue] (G-121) edge (G-221);
		\draw[<->][blue] (G-223) edge (G-242);
		\draw[<->][blue] (G-133) edge (G-142);
		\draw[<->][blue] (G-131) edge (G-231);
		\draw[<->][blue] (G-233) edge (G-252);
		\draw[<->][blue] (G-143) edge (G-152);
		\draw[<->][blue] (G-141) edge (G-241);
		\draw[<->][blue] (G-243) edge (G-212);
		\draw[<->][blue] (G-153) edge (G-112);
		\draw[<->][blue] (G-151) edge (G-251);
		\draw[<->][blue] (G-253) edge (G-222);
		\end{tikzpicture}}{triangluated-petersen-graph-example}{$P := \langle \{0,1,2,3,4,5\} \ \vert \ \{a\} \ \vert \ \{b\} \ \vert \ a \rightarrow (0,1,2)(3,4,5), b \rightarrow (0,1)(2,3)(4,5) \ \vert \ \{a^3,ba^{-1}(ba)^4,(ab)^5\}, \{\},\{\},\{\},\{(ab)^5,a^3\},\{\} \rangle$}
	Let $C(P) =: C$ be the presentation graph and $\sC(P) =: \sC$ the presentation complex of $P$. Note we can define cover $\eta: TriP(5,2) \rightarrow C$ by
	\[
	 \eta: \begin{array}{c} x_{i,j} \\ y_{i,j}\\ (x_{i,0}, x_{i+1,1}) \end{array} \mapsto \begin{array}{c} j\\ j+3\\ b(0,1)\end{array}, \ \mbox{ and } \ \begin{array}{c} (x_{i,0},x_{i,1}) \\ (y_{i,1},y_{i+k,2})\\ (y_{i,1}, y_{i,2}) \end{array} \mapsto \begin{array}{c} a(0,1)\\ b(4,5)\\ a(4,5) \end{array}.
	\]
	Note $TriP(5,2)$ with the induced 2-cells from $a^3$ relations is homeomorphic to $P(5,2)$. By a similar argument to that in Theorem \ref{Petersen-Split-Presentation} we obtain $TriP(5,2) = \Sp(P)$.
\end{examp}

This generalises for all triangle replaced generalised Petersen graphs, $TriP(n,k)$ $=$ $\langle$ $\{0,1,2,3,4,5\}$ $\vert$ $\{a\}$ $\vert$ $\{b\}$ $\vert$ $a \rightarrow (0,1,2)(3,4,5),$ $b \rightarrow (0,1)(2,3)(4,5)$ $\vert$ $\{a^3,(ba^{-1})^{k-1}(ba)^4,(ab)^n\},$ $\{\},$ $\{\},$ $\{\},$ $\{(ab)^n,a^3\},$ $\{\}$ $\rangle$. This provides us with uniform \pp\ with global inverses for the triangle replaced Peteresen graphs however one could ask for more. We call the \pp\ with a global inverse \textit{Cayley based} if $P$, the \pcg, is a Cayley graph with the generators given by $\sS$. In the following we answer which $n,k \in \bN$ give rise to a Cayley based presentation. This is answered in our Theorem \ref{TriP-cover}, which states it does if $n$ is even and $k$ is odd. First let state some known results for generalised Peteresen graph. 

\begin{thm} \label{PetersenTrans}
	(Frucht, Graver and Watkins \cite{FGW71}) $P(n,k)$ is vertex transitive if and only if $k^2 \cong \pm 1$ (mod $n$) or $(n,k) = (10,2)$. Furthermore, $P(n,k)$ is arc-transitive if and only if $(n,k)$ $\in$ $\{(4,1),$ $(5, 2),$ $(8, 3),$ $(10, 2),$ $(10, 3),$ $(12, 5),$ $(24, 5)\}$.
\end{thm}
\begin{thm} \label{PetersenCay}
	(Nedela and \v Skoviera \cite{NS95}) $P(n,k)$ is a Cayley graph if and only if $k^2 \cong 1$ (mod $n$).
\end{thm}

\begin{lem}
	$P(n,k)$ is bipartite if and only if $n$ is even and $k$ is odd.
\end{lem}
\begin{proof}
	Assume $P(n,k)$ bipartite then we know all cycles in $P(n,k)$ are bipartite therefore of even length, namely the $n$ cycle ($x_0 \ldots x_{n-1}$) giving $n$ even and $3 + k$-cycle ($x_0y_0y_{k}x_{k}x_{k-1} \ldots x_1$) giving $k$ odd.
	
	Assume $n$ is even and $k$ is odd colour $x_{2i}$ and $y_{2i+1}$ black whereas $x_{2i+1}$ and $y_{2i}$ red which is well defined as $n$ is even and the edges have alternating colours as $k$ is odd.  
\end{proof}

Next we classify which $TriP(n,k)$ are vertex transitive. For this it helps to know that there exists monomorphism $\psi: Aut(P(n,k)) \rightarrow Aut(TriP(n,k))$. Note we have map $\eta: TriP(n,k) \rightarrow P(n,k)$ where $x_{i,j} \mapsto x_i$ and $y_{i,j} \mapsto y_i$. Inutatively the injection is given by $\psi : \phi \mapsto \eta^{-1} \circ \phi \circ \eta$ however $\eta^{-1}$ is not well defined.

\begin{lem}
	There exists monomorphism $\psi: Aut(P(n,k)) \rightarrow Aut(TriP(n,k))$.
\end{lem} 
\begin{proof}
	We remind the reader the labelling we use for $P(n,k)$,
	\begin{align*}
		V(P(n,k)) &:= \{x_i, y_i \ \vert \ i \in \bZ /n\bZ \}, \mbox{ and} \\
		E(P(n,k)) &:= \{ (x_i, x_{i+1}), (x_i, y_i), (y_i, y_{i+k}) \ \vert \ i \in \bZ /n \bZ \}.
	\end{align*}
	Define a family of bijections $loc : St(a_i) \rightarrow \{a_{i,j} \vert j \in \bZ/3\bZ\}$ where $(a_i,b_k) \mapsto a_{i,j}$ such that $(a_{i,j},b_{k,j'}) \in E(TriP(n,k))$, with $a,b \in \{x,y\}$ (if multiple exist, just choose one such that $(loc(a_i,b_j),loc(b_j, a_i)) \in E(TriP(n,k))$). Suppose we have $\phi \in \Aut(P(n,k))$, we will define $\hat{\phi} \in Aut(TriP(n,k))$. Suppose $\phi(a_i,b_j) = (c_{i'},d_{j'})$, then $\hat{\phi}(loc(a_i,b_j)) = loc(c_{i'},d_{j'})$ with $a,b,c,d \in \{x,y\}$. Note that as $\phi$ is a bijection on $E(P(n,k))$ we have that $\hat{\phi}$ is a bijection on $V(TriP(n,K))$. Further if $\phi(a_i) = b_k$ then $(a_{i,j},a_{i,j+1}) \mapsto (b_{k,l}, b_{k,m})$ with $l,m \in \bZ/3\bZ$ which is an edge regardless of $l$ and $m$. Lastly as $(loc(a_i,b_j),loc(b_j, a_i)) \in E(TriP(n,k))$ we have that $\hat{\phi} \in \Aut(TriP(n,k))$.
	
	Now define $\psi(\phi) = \hat{\phi}$. Suppose $\psi(\phi) = \psi(\phi')$ then $\hat{\phi}(a_{i,j}) = b_{k,\ast} = \hat{\phi'}(a_{i,j})$ moreover $\phi(a_i) = b_k = \phi'(a_i)$ giving $\phi = \phi'$. From
	\begin{align*}
	\widehat{\phi' \circ \phi}(loc(e)) & = loc(\phi' \circ \phi(e))\\
	& = \hat{\phi'}(loc(\phi(e))))\\
	& = \hat{\phi'} \circ \hat{\phi}(loc(e)).
	\end{align*} 
	we obtain that $\psi$ is a homomorphism.
\end{proof}

\begin{prop}
	$TriP(n,k)$ is vertex transitive if and only if $P(n,k)$ is arc-transitive.
\end{prop}
\begin{proof}
	From the definition of $TriP(n,k)$ the only 3-cycles are $x_{i,0} x_{i,1} x_{i,2}$ and $y_{i,0} y_{i,1} y_{i,2}$. Therefore any $\phi \in \Aut(TriP(n,k))$ factors through the quotient map $\eta: TriP(n,k) \rightarrow P(n,k)$. Giving $\eta(\Aut(TriP(n,k))) \leq \Aut(P(n,k))$. However if $TriP(n,k)$ is vertex transitive then $\eta(Aut(TriP(n,k)))$ is an arc-transitive subgroup.
	
	If $\Aut(P(n,k))$ is arc-transitive then we claim $\psi(\Aut(P(n,k))) \leq \Aut(TriP(n,k))$ acts transitive on $V(TriP(n,k))$. Let $a_{i,j}, b_{i',j'} \in V(TriP(n,k))$ let $e = loc^{-1}(a_{i,j}) \in st(a_i)$ and $e' := loc^{-1}(b_{i',j'}) \in St(b_{i'})$. As $\Aut(P(n,k))$ is arc transitive there exists $\phi \in \Aut(P(n,k))$ such that $\phi(e) = e'$ giving $\hat{\phi}(a_{i,j}) = b_{i',j'}$.
\end{proof}

The obvious candidate for a Cayley based \pp\ is the triangle replaced version of the \pcg\ for $P(n,k)$ which is $P(3,1)$.

\showFigTikz{
	\begin{tikzpicture}[scale = 0.2]
	\node (G-11) at (0,0) [circle,fill=black,scale = 0.3, label=left:{$u_2$}] {};
	\node (G-12) at (-3,3) [circle,fill=black,scale = 0.3, label=left:{$u_1$}] {};
	\node (G-13) at (-3,-3) [circle,fill=black,scale = 0.3, label=left:{$u_0$}] {};
	\node (G-21) at (4,0) [circle,fill=black,scale = 0.3, label=right:{$v_2$}] {};
	\node (G-22) at (7,3) [circle,fill=black,scale = 0.3, label=right:{$v_1$}] {};
	\node (G-23) at (7,-3) [circle,fill=black,scale = 0.3, label=right:{$v_0$}] {};
	\draw (G-11) edge (G-12);
	\draw (G-12) edge (G-13);
	\draw (G-13) edge (G-11);
	\draw (G-21) edge (G-22);
	\draw (G-22) edge (G-23);
	\draw (G-23) edge (G-21);
	\draw (G-11) edge (G-21);
	\draw (G-12) edge (G-22);
	\draw (G-13) edge (G-23);
	\end{tikzpicture}}{P-3-1}{$P(3,1)$}

\begin{lem}
	$TriP(n,k)$ covers $P(3,1)$ if and only if $P(n,k)$ is bipartite.
\end{lem}
\begin{proof}
	Given $P(n,k)$ is bipartite, meaning $n$ is even and $k$ is odd. Label $V(P(3,1)) = \{u_i,$ $v_i$ $\vert$ $i \in \bZ/3\bZ\}$ as above. Then map $\eta: TriP(n,k) \rightarrow P(3,1)$ by $\eta(x_{2i,j}) = u_j$, $\eta(y_{2i + 1, j}) = u_{(0,1)(j)}$, $\eta(x_{2i + 1, j}) = v_{(0,1)(j)}$, and $\eta(y_{2i,j}) = v_{2-j}$ giving us the required cover.
	
	Suppose $TriP(n,k)$ covers $P(3,1)$, then notice that the triangles in $TriP(n,k)$ have to be mapped to triangles in $P(3,1)$. Therefore colour one triangle in $P(3,1)$ black and the other red, this lifts to a colouring of $TriP(n,k)$ which quotients to a colouring of $P(n,k)$ which is bipartite.
\end{proof}

\todo{what can we do with this?}

However combining this with some computation done in Shepard \cite{Sh18} \todo{do something about this, delete section?} we obtain the proof of Theorem \ref{TriP-cover}.

\paragraph{\textbf{Theorem \ref{TriP-cover}}} A non-Cayley vertex transitive $TriP(n,k)$ covers a Cayley graph if and only if $P(n,k)$ is bipartite.\\

Therefore for $TriP(n,k)$ with $n$ even and $k$ odd we can write the following Cayley based \pp\ $TriP(n,k)$ $=$ $\langle$ ${0,1,2,3,4,5}$ $\vert$ $a \rightarrow (0,1,2)(3,4,5),$ $a^{-1} \rightarrow (0,2,1)(3,5,4),$ $b \rightarrow (0,3)(1,5)(2,4)$ $\vert$ $\{a^3,(ab)^n\}$, $\{\}$, $\{\}$, $\{a^3, (ab)^n\}$, $\{(a^{-1}b)^3(ab)^{k-1}(a^{-1}b)\}$, $\{\}$ $\rangle$. \todo{Extend by adding a picture here}

\todo{Is there a nice conjecture that generalises this?}
}

\section{Conclusion} \label{sec Conc}

In this paper we showed that every vertex transitive graph admits a \pp, but we were not able to limit the number of vertex classes required.  This suggests

\begin{prb}{Can every  vertex transitive graph on at least 3 vertices be represented as a \pcg\ so that each vertex class contains at least two vertices?}
\end{prb}

Define the \defin{Cayleyness} of a (vertex transitive) graph $\Gha$ as the minimum number of vertex classes in any \pp\ of $\Gha$. Thus $\Gha$ is a \Cg\  if and only if it has Cayleyness 1.

\begin{prb}
Is there a vertex transitive graph of Cayleyness (at least) $n$ for every $n\in \mathbb{N}$? 
\end{prb}

Since the Cayleyness of a vertex-transitive graph $\Gamma$ divides $\vert V(\Gamma) \vert$, a potential approach to answering to this question is to enquire if for every prime $p \in \bN$, there is a vertex-transitive graph on $p^k$ vertices for some $k\in \bN$ that is not a \Cg.

We observe that the Diestel-Leader graph $DL(m,n)$ for $m\neq n$ has infinite Cayleyness. This follows by combining \Prr{quasi} with the fact that these graphs are not quasi-isometric to any finitely generated group \cite[Theorem~1.4]{EsFiWhCoa}. This motivates

\begin{prb}
Does a vertex-transitive graph $\Gamma$ have finite Cayleyness if and only if $\Gamma$ is quasi-isometric to a Cayley graph?
\end{prb}

\medskip
We say that a (vertex transitive) graph $\Gha$ is \defin{finitely presented} if it has a \pp\ with finitely many vertex classes and finitely many relators. Is this equivalent to $\pi_1(\Gha)$ being generated by walks of bounded length? It would be interesting to generalise results about finitely presented groups such as \cite{HigSub} to finitely presented graphs in our sense.

It is not hard to show, using group presentations, that there are finitely many finite extensions of any finitely presented group. When it comes to vertex transitive graphs the analogous question is still open and has been extensively studied, see \cite{GP95, Tr15, NT14} and references therein. We hope that \pp s will be useful in developing an analogous proof.

\section*{Acknowledgements}{We thank Derek Holt and Paul Martin for several corrections to an earlier draft of this work. We thank Wolfgang Woess for making us aware of Watkins' question.}


\bibliographystyle{alpha}
\bibliography{mycollection}

\begin{thebibliography}{FGW71}

\bibitem[Als08]{AB08}
B.~Alspach.
\newblock The wonderful {W}alecki construction.
\newblock {\em Bull. Inst. Combin. Appl.}, 52:7--20, 2008.

\bibitem[Bar75]{BZ75}
Z.~Baranyai.
\newblock On the factorization of the complete uniform hypergraph.
\newblock In {\em Infinite and finite sets ({C}olloq., {K}eszthely, 1973;
  dedicated to {P}. {E}rd\H{o}s on his 60th birthday), {V}ol. {I}}, volume~10,
  pages 91--108.Vol. 10. Colloq. Math. Soc., 1975.

\bibitem[Die05]{DiestelBook05}
R.~Diestel.
\newblock {\em Graph {T}heory \emph{(3rd edition)}}.
\newblock Springer-Verlag, 2005.
\newblock \\ Electronic edition available at:\\ {\small\tt
  http://www.math.uni-hamburg.de/home/diestel/books/graph.theory}.

\bibitem[DL01]{DiLeCon}
R.~Diestel and I.~Leader.
\newblock {A conjecture concerning a limit of non-{C}ayley graphs}.
\newblock {\em J.\ Algebraic Combinatorics}, 14:17--25, 2001.

\bibitem[EFW12]{EsFiWhCoa}
A.~Eskin, D.~Fisher, and K.~Whyte.
\newblock Coarse differentiation of quasi-isometries i: Spaces not
  quasi-isometric to cayley graphs.
\newblock {\em Annals of Mathematics}, 176(1):221--260, 2012.

\bibitem[FGW71]{FGW71}
R.~Frucht, J.~E. Graver, and M.~E. Watkins.
\newblock The groups of the generalized {P}etersen graphs.
\newblock {\em Proc. Cambridge Philos. Soc.}, 70:211--218, 1971.

\bibitem[Ger83]{Ge83}
S.~M. Gersten.
\newblock Intersections of finitely generated subgroups of free groups and
  resolutions of graphs.
\newblock {\em Invent. Math.}, 71(3):567--591, 1983.

\bibitem[GL20]{GL16}
G.~R. Grimmett and Z.~Li.
\newblock Cubic graphs and the golden mean.
\newblock {\em Discrete Math.}, 343(1):111638, 2020.

\bibitem[GP95]{GP95}
A.~Gardiner and C.~E. Praeger.
\newblock A geometrical approach to imprimitive graphs.
\newblock {\em Proc. London Math. Soc. (3)}, 71(3):524--546, 1995.

\bibitem[GR01]{GR01}
C.~{Godsil} and G.~{Royle}.
\newblock {\em Algebraic Graph Theory}, volume 207 of {\em Graduate Texts in
  Mathematics.}
\newblock volume 207 of Graduate Texts in Mathematics. Springer, 2001.

\bibitem[Hal35]{PH35}
P.~Hall.
\newblock On representatives of subsets.
\newblock {\em J. London Math. Soc.}, 10:26--30, 1935.

\bibitem[Hat02]{HA02}
A.~Hatcher.
\newblock {\em Algebraic topology}.
\newblock Cambridge University Press, Cambridge, 2002.

\bibitem[Hig61]{HigSub}
G.~Higman.
\newblock {Subgroups of Finitely Presented Groups}.
\newblock {\em Proceedings of the Royal Society of London. Series A,
  Mathematical and Physical Sciences}, 262(1311):455--475, 1961.

\bibitem[Kap]{KapPD}
M.~Kapovich.
\newblock A note on properly discontinuous actions.
\newblock ``https://www.math.ucdavis.edu/\textasciitilde
  kapovich/EPR/prop-disc.pdf".

\bibitem[Lee16]{Leemann}
P.~H. Leemann.
\newblock {\em On subgroups and Schreier graphs of finitely generated groups.
  PhD Thesis}.
\newblock Universit\'{e} de Gen\'{e}ve, 2016.

\bibitem[Lei83]{Le83}
F.~T. Leighton.
\newblock On the decomposition of vertex-transitive graphs into multicycles.
\newblock {\em J. Res. Nat. Bur. Standards}, 88(6):403--410, 1983.

\bibitem[Lov70]{LL70}
L.~Lov\'asz.
\newblock {\em Combinatorial structures and their applications: The
  factorization of graphs}, volume 1969 of {\em Proceedings of the Calgary
  International Conference on Combinatorial Structures and their Applications
  held at the University of Calgary, Calgary, Alberta, Canada, June}.
\newblock Gordon and Breach, Science Publishers, New York-London-Paris, 1970.

\bibitem[Mar81]{Ma81}
D.~Maru\v{s}i\v{c}.
\newblock On vertex symmetric digraphs.
\newblock {\em Discrete Math.}, 36(1):69--81, 1981.

\bibitem[Mil68]{Mi68}
J.~Milnor.
\newblock A note on curvature and fundamental group.
\newblock {\em J. Differential Geometry}, 2:1--7, 1968.

\bibitem[Mwa09]{Mw09}
E.~Mwambene.
\newblock Cayley graphs on left quasi-groups and groupoids representing
  {$k$}-generalised {P}etersen graphs.
\newblock {\em Discrete Math.}, 309(8):2544--2547, 2009.

\bibitem[NT14]{NT14}
E.~A. Neganova and V.~I. Trofimov.
\newblock Symmetric extensions of graphs.
\newblock {\em Izv. Ross. Akad. Nauk Ser. Mat.}, 78(4):175--206, 2014.

\bibitem[Pet91]{Petersen}
J.~Petersen.
\newblock {Die {T}heorie der regul\"{a}ren Graphen}.
\newblock {\em Acta Math.}, 15(1):193--220, 1891.

\bibitem[Tro15]{Tr15}
V.~I. Trofimov.
\newblock The finiteness of the number of symmetric extensions of a locally
  finite tree by means of a finite graph.
\newblock {\em Tr. Inst. Mat. Mekh.}, 21(3):303--308, 2015.

\bibitem[Wat90]{Wa90}
M.~E. Watkins.
\newblock Vertex-transitive graphs that are not {C}ayley graphs.
\newblock In {\em Cycles and rays ({M}ontreal, {PQ}, 1987)}, volume 301 of {\em
  NATO Adv. Sci. Inst. Ser. C Math. Phys. Sci.}, pages 243--256. Kluwer Acad.
  Publ., Dordrecht, 1990.

\end{thebibliography}

\newpage\appendix
\section{Appendix: Perfect matchings in infinite transitive graphs}
 \begin{center}
{\large authored by}

	\vskip 0.2 cm
	
	\noindent {\bf Matthias Hamann}\footnote{Supported by the Heisenberg-Programme of the Deutsche Forschungsgemeinschaft (DFG Grant HA8257/1-1).}  and  {\bf Alex Wendland}	\\

	\vskip 0.3 cm

Department of Mathematics, University of Warwick, Coventry, CV4 7AL, UK.
\end{center}	

\vskip 0.4 cm

In this Appendix we generalise \Tr{vertex-transitive-matching} of Godsil and Royle  to infinite graphs as follows. 

\paragraph{\textbf{Theorem \ref{inf-vertex-matching}}}
	Let $\Gha$ be a connected countably infinite vertex transitive graph. Then $\Gha$ has a perfect matching.\\

We say that a matching $M \subset E(\Gha)$ \emph{misses} $x \in V(\Gha)$ if $x$ is not in an edge of $M$. 
It is not hard to see that every connected, countable graph $\Gha$ admits a sequence of sets $B_n \subset V(\Gha)$, $n \in \bN$, such that
\begin{equation}\tag{$\dagger$}\label{B_n}
\vert B_n \vert < \infty,\ B_n \subset B_{n+1},\ \bigcup_n B_{n} = V(\Gha),\ \text{and the induced graph on }B_n\text{ is connected}.
\end{equation}

We define an order on the space of matchings for infinite graphs:

\begin{defn}
	Let $\Gha$ be a countably infinite graph, and let $M \subset E(\Gha)$ be a matching. The \defin{miss sequence} of $M$ with respect to a sequence $(B_n)_{n \in \bN}$ as in \eqref{B_n} is the sequence $(m_n)_{n \in \bN}$, where $m_n$ is the number of  vertices of $B_n$ that $M$ misses. For two matchings $M_1$ and $M_2$ with miss sequences with respect to $(B_n)_{n \in \bN}$ being $(a_n)$ and $(b_n)$ respectively, we write $M_1 < M_2$ if there exists $N \in \bN$ such that $a_n = b_n$ for all $n < N$ and $a_N > b_N$. We write $M_1\leq M_2$ if either $M_1<M_2$ or $a_n = b_n$ for all $n \in \bN$.  
\end{defn}

We now use Zorn's lemma to prove that there is a maximum matching with respect to this partial order.

\begin{lem}\label{max-matching}
	For every countable connected graph $\Gha$ and $(B_n)_{n \in \bN}$ satisfying {\rm (\ref{B_n})} there exists a maximal matching with respect to the miss sequence.
\end{lem}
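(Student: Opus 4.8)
The plan is to apply Zorn's Lemma to the set $\mathcal M$ of all matchings of $\Gha$, ordered by the relation $\le$ coming from the miss sequence with respect to $(B_n)_{n\in\bN}$. Since $\le$ is a \emph{total} preorder, the whole force of the argument lies in exhibiting, for a given chain $\mathcal C\subseteq\mathcal M$, an upper bound that is again a matching; one cannot merely invoke an abstract maximality principle, a totally ordered set having no maximum in general. So fix a chain $\mathcal C$ and aim to construct a matching $M^{\ast}$ with $M\le M^{\ast}$ for every $M\in\mathcal C$.

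The first step is to isolate the \emph{target} miss sequence of $\mathcal C$. Put $a_0^{\ast}:=\min\{m_0(M):M\in\mathcal C\}$, which is attained since $m_0$ takes values in the finite set $\{0,\dots,|B_0|\}$, let $\mathcal C_0:=\{M\in\mathcal C:m_0(M)=a_0^{\ast}\}$, and recursively $a_{n+1}^{\ast}:=\min\{m_{n+1}(M):M\in\mathcal C_n\}$ and $\mathcal C_{n+1}:=\{M\in\mathcal C_n:m_{n+1}(M)=a_{n+1}^{\ast}\}$. Each $\mathcal C_n$ is non-empty, and one checks directly that \emph{any} matching whose miss sequence equals $(a_n^{\ast})_n$ is an upper bound for $\mathcal C$: for $M\in\mathcal C$ not lying in every $\mathcal C_n$, at the least index $n$ with $M\notin\mathcal C_n$ one has $m_k(M)=a_k^{\ast}$ for $k<n$ and $m_n(M)>a_n^{\ast}$, so $M<M^{\ast}$; and if $M\in\bigcap_n\mathcal C_n$ the two miss sequences agree. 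Hence it suffices to produce one matching realising $(a_n^{\ast})_n$.

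For this I would run a compactness/diagonalisation argument. Identify matchings with points of the compact metrisable space $\{0,1\}^{E(\Gha)}$ (legitimate since $E(\Gha)$ is countable), choose $M_n'\in\mathcal C_n$ for each $n$, and pass to a subsequence converging pointwise to some $M^{\ast}\in\{0,1\}^{E(\Gha)}$; a pointwise limit of matchings is again a matching, since two edges sharing an endpoint cannot both lie in cofinitely many $M_{n_k}'$. Diagonalising further over the countably many vertices, one may assume that for each $v\in V(\Gha)$ the covered/uncovered status of $v$ is eventually constant along the chosen subsequence, and hence that the (finitely many) uncovered vertices of each fixed $B_n$ stabilise. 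Because $m_n(M_{n_k}')=a_n^{\ast}$ for all large $k$, this already gives $m_n(M^{\ast})\ge a_n^{\ast}$ for every $n$.

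The remaining, and main, obstacle is the reverse inequality $m_n(M^{\ast})\le a_n^{\ast}$, i.e.\ that passing to the limit does not lose coverage. When $\Gha$ is locally finite this is automatic: the set of matchings covering a fixed vertex is then clopen, so $m_n$ is continuous and $m_n(M^{\ast})=\lim_k m_n(M_{n_k}')=a_n^{\ast}$, which closes the argument (this is the case treated by Leemann, and it already covers all locally finite vertex transitive graphs). In general, a vertex of infinite degree could be covered in every $M_{n_k}'$ but by pairwise distinct edges, so that it is uncovered in $M^{\ast}$ and $m_n$ jumps up. To rule this out I would, before diagonalising, replace each $M_n'$ by a matching in $\mathcal C_n$ in which the partners of the vertices of $B_n$ are confined to a controlled finite set, and then also stabilise those partners along the subsequence; pinning down exactly how much control over the partners is available, and how it interacts with the exhaustion $(B_n)$, is the delicate point on which the proof turns, and where I expect the real work to be.
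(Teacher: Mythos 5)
Your overall strategy is the same as the paper's: produce an upper bound for a chain by a diagonalisation/compactness argument over the finite sets $B_n$, then invoke Zorn's lemma. Your reduction to realising the lexicographic infimum $(a_n^{\ast})_{n\in\bN}$ of the miss sequences over the chain is correct and cleanly stated, and it has the merit of handling chains of arbitrary cardinality directly (the paper only writes the argument for a chain indexed by $\bN$). For locally finite $\Gha$ your proof is complete: coverage of a fixed vertex is a clopen condition on $\{0,1\}^{E(\Gha)}$, so the miss counts pass to the pointwise limit, and this is in fact the only case the main body of the paper ever uses.

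However, Lemma~\ref{max-matching} is stated for arbitrary countable connected graphs, and your argument genuinely stops short when $\Gha$ has vertices of infinite degree. You correctly identify the escaping-partner phenomenon (a vertex of $B_n$ covered in every term of the subsequence, but by pairwise distinct edges, hence uncovered in the limit, so that $m_n(M^{\ast})$ can exceed $a_n^{\ast}$), and you propose to repair it by confining the partners of the vertices of $B_n$ to a controlled finite set --- but you do not say how, and it is not obvious that one can: rerouting the partner of some $v\in B_n$ into a prescribed finite set may force another vertex of $B_n$ to become uncovered, taking you out of $\mathcal C_n$. This is the genuine gap. For comparison, the paper's proof pigeonholes on the finitely many matchings that the chain induces on the finite graph spanned by $B_n$, passes to nested index sets $\sI_{n+1}\subset\sI_n$ on which these induced matchings $\sM_n$ coincide, and takes $M=\bigcup_n\sM_n$; the crucial identity there, that $M$ misses exactly as many vertices of $B_n$ as the matchings indexed by $\sI_{n+1}$ do, is asserted ``by the definition of $M$'' and tacitly uses that a covered vertex of $B_n$ is eventually covered by an edge lying inside some $B_m$ --- precisely the point you flag, and again automatic only under local finiteness. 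So you have put your finger on the one step where real work is required; your sketch does not carry that step out, and you should either supply it or restrict the statement to locally finite graphs.
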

\begin{proof}
	Assume there is a strictly increasing chain $(M_i)_{i\in\bN}$ of matchings in~$\Gha$ with respect to $<$. Let $M_i$ have miss sequence $(m^i_n)_{n \in \bN}$. We will inductively define infinite subsequences  $\sI_{n+1} \subset \sI_n \subset \bN$. Let $\sI_1 = \bN$. Assume we have defined $\sI_n$. As the induced graph on $B_n$ is finite there are finitely many matchings on it. Therefore infinitely many $i\in \sI_n$ induce the same matching $\sM_n$ on $B_n$. Let $\sI_{n+1}$ be such an infinite subset.
	
	Let $M = \bigcup_{i \in \bN} \sM_i$, which is a matching as $\sM_i \subset \sM_{i+1}$ for each $i \in \bN$ and every $\sM_i$ is a matching. Let $(m_n)_{n \in \bN}$ be the miss sequence of $M$. Then $m_n = m^i_n$ for $i \in \sI_n$ by the definition of~$M$. Let $i\in\bN$ and let $n$ be smallest such that $m_n^i\neq M_n^{i+1}$. As $(M_i)_{i\in\bN}$ is a chain, we have $m_n^i<m_n^{i+1}$. Let $j\in \sI_n$ with $j>i$. Then $M_i<M_j$. So we have $m_n^i<m_n^j=m_n$ and $m_k^i=m_k^j=m_k$ for $k<n$, and therefore $M_i<M$. Thus $M$ is an upper bound for the chain $(M_i)_{i \in \bN}$. Zorn's lemma now implies the existence of a maximum matching.
\end{proof}

We say that a path, ray, or cycle $P$ is \emph{alternating} with respect to  a matching $M$, if every other edge of $P$ is contained in $M$. The number of edges in a path is its \emph{length}. We remind the reader of the symmetric difference $S \oplus T = (S \cup T) \backslash (S \cap T)$.

\begin{lem} \label{matching-structure}
	Let $\Gha$ be a connected countably infinite graph and let $(B_n)_{n \in \bN}$ satisfy {\rm (\ref{B_n})}. If $M_1$ and $M_2$ are two maximum matchings, then the symmetric difference $M_1 \oplus M_2$ consists of even length cycles, double rays, and even length paths with end-vertices $u,v \in V(\Gha)$ such that $u,v \in B_{n} \backslash B_{n-1}$. Every component of $M_1 \oplus M_2$ is alternating with respect to $M_1$ and $M_2$.
\end{lem}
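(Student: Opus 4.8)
The plan is to deduce the coarse structure of $M_1\oplus M_2$ from a degree count, and then use the maximality of $M_1$ and $M_2$ together with the miss‑sequence order to exclude the forbidden component types and to locate the endpoints of the path components.

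\textbf{Coarse structure and alternation.} First I would note that every vertex $x$ meets at most one edge of $M_1$ and at most one of $M_2$; if these two edges coincide, the common edge is absent from $M_1\oplus M_2$ and $x$ is isolated in $M_1\oplus M_2$, while if they are distinct both survive in $M_1\oplus M_2$, so $\deg_{M_1\oplus M_2}(x)\le 2$ always. Hence each component of the subgraph $M_1\oplus M_2$ is a finite path, a ray, a double ray, or a cycle. At an internal (degree‑$2$) vertex of such a component the two incident edges are exactly the $M_1$‑edge and the $M_2$‑edge at that vertex, since no vertex meets two edges of the same $M_i$; walking along a component, the edges therefore alternate between $M_1$ and $M_2$, so every component is alternating with respect to both matchings, and in particular every cycle component has even length.

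\textbf{No rays and no odd finite paths.} Here maximality enters. Suppose a component $R$ were a ray with endpoint $u$; by symmetry (swapping the roles of $M_1$ and $M_2$) we may assume its first edge lies in $M_1\setminus M_2$. Then $u$ is missed by $M_2$: an $M_2$‑edge at $u$ other than the first edge of $R$ would itself lie in $M_1\oplus M_2$, contradicting $\deg_{M_1\oplus M_2}(u)=1$. Now $M_2\oplus R$ is again a matching — swapping along $R$ covers $u$ and leaves every vertex outside $R$ unchanged — and it covers every vertex that $M_2$ covers together with $u$. Letting $n$ be least with $u\in B_n$, the miss sequences of $M_2$ and $M_2\oplus R$ agree below $n$ and the latter is strictly smaller at $n$, so $M_2<M_2\oplus R$, contradicting maximality of $M_2$. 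The same swap rules out a finite path component $P$ of odd length: its first and last edges then lie in the same $M_i$, so both endpoints of $P$ are missed by the \emph{other} matching, $P$ is augmenting for that matching, and augmenting along $P$ yields a strictly larger matching.

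\textbf{Endpoints of an even path lie in one shell.} Let $P=v_0e_1v_1\cdots e_kv_k$ be a finite path component; by the previous step $k$ is even, and by alternation its end edges lie in different $M_i$, say $e_1\in M_1\setminus M_2$ and $e_k\in M_2\setminus M_1$, so (as above) $v_0$ is missed by $M_2$ and $v_k$ is missed by $M_1$. Let $B_a$ and $B_b$ be the $\subseteq$‑least members of $(B_n)_{n\in\bN}$ containing $v_0$ and $v_k$ respectively (with the convention $B_0:=\emptyset$), so $v_0\in B_a\setminus B_{a-1}$ and $v_k\in B_b\setminus B_{b-1}$, and suppose $a\ne b$. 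If $a<b$, then $M_2\oplus P$ is a matching which covers $v_0$, uncovers $v_k$, and changes nothing else; since $v_0\in B_a$ while $v_k\notin B_a$, its miss sequence agrees with that of $M_2$ below $a$ and is strictly smaller at $a$, giving $M_2<M_2\oplus P$, a contradiction. If $b<a$, the symmetric swap $M_1\oplus P$ contradicts maximality of $M_1$. Hence $a=b$, i.e.\ $u:=v_0$ and $v:=v_k$ lie in the same $B_n\setminus B_{n-1}$. Combining this with the first two steps gives exactly the asserted classification. The main obstacle is carrying out Steps 2 and 3 in the correct direction relative to the miss‑sequence order: one must perform the swap inside whichever of $M_1,M_2$ has its missed endpoint in the \emph{earlier} shell, so that the gain is visible at the first index where the two miss sequences differ; swapping in the other matching produces a strictly \emph{worse} matching and proves nothing. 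The rest is the routine bookkeeping of symmetric differences of matchings.
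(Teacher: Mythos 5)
Your proposal is correct and follows essentially the same route as the paper's proof: degree counting gives the coarse decomposition into alternating paths, cycles, rays and double rays, and then maximality (via the miss-sequence order) kills rays and odd paths and forces the two endpoints of an even path into the same shell $B_n\setminus B_{n-1}$. Your explicit care about swapping inside the matching whose missed endpoint lies in the earlier shell is exactly the content of the paper's ``without loss of generality'' step, so nothing is missing.
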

\begin{proof}
	As $M_1$ and $M_2$ are matchings, vertices in $M_1 \oplus M_2$ have degree $0,1$ or $2$. So $M_1 \oplus M_2$ consists of paths, cycles, rays and double rays, which are alternating with respect to $M_1$ and $M_2$. As the components are alternating with respect to $M_1$ all cycles must be of even length.
	
	Suppose $M_1 \oplus M_2$ contains a ray $R$ and let $y \in V(\Gha)$ be its starting vertex. Without loss of generality $M_1$ misses $y$. However $M_1 \oplus R$ does not miss $y$ and moreover it does not miss any vertex $M_1$ does not miss. This contradicts the maximality of $M_1$ as a matching. So $M_1 \oplus M_2$ does not contain any rays.
	
	Let $P$ be a path in $M_1 \oplus M_2$. If $P$ has odd length then the end-vertices $u,v \in V(\Gha)$ are without loss of generality missed by $M_1$. Therefore $M_1 \oplus P$ does not miss any vertex $M_1$ missed but in addition does not miss $u$ and $v$ which contradicts maximality of $M_1$. Therefore $P$ has even length.
	
	Let $u,v\in V(\Gha)$ be the end-vertices of~$P$. Suppose that $u \in B_N \backslash B_{N-1}$ and $v \in B_{N'} \backslash B_{N'-1}$ and suppose $N < N'$. Without loss of generality let $M_1$ miss $u$. Let $M_1$ and $M_1 \oplus P$ have miss sequences $(a_n)_{n \in \bN}$ and $(b_n)_{n \in \bN}$ with respect to $(B_n)_{n \in \bN}$ respectively. Then $a_n = b_n$ for $n < N$. However $M_1$ misses $u$ in $B(x,N)$ whereas $M_1 \oplus P$ does not. Therefore $a_{N} > b_{N}$. This contradicts the maximality of $M_1$, therefore $N = N'$.  
\end{proof}

\begin{lem} \label{missing-two}
	Let $\Gha$ be a connected countable graph and let $(B_n)_{n \in \bN}$ satisfy {\rm (\ref{B_n})}. Let $u,v \in V(\Gha)$ be such that no maximum matching misses both of them but such that there  are maximum matchings  $M_u$ and $M_v$ that miss $u$ and $v$ respectively. Then there is a path of even length in $M_u \oplus M_v$ with end-vertices $u$ and $v$.
\end{lem}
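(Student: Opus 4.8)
\textbf{Proof proposal for Lemma~\ref{missing-two}.}

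The plan is to analyse the symmetric difference $M_u\oplus M_v$ using the structural result of \Lr{matching-structure}, and to argue by contradiction that if no component of $M_u\oplus M_v$ is a path joining $u$ and $v$, then we can produce a maximum matching missing both $u$ and $v$, contrary to hypothesis. First I would record what \Lr{matching-structure} gives us: since $M_u$ and $M_v$ are both maximum matchings, every component of $D:=M_u\oplus M_v$ is one of (i) an even cycle, (ii) a double ray, or (iii) an even-length path whose two end-vertices lie in the same difference set $B_n\setminus B_{n-1}$, and in every case the component alternates between $M_u$ and $M_v$. Note that $u$ is missed by $M_u$ and hence, since $M_v$ is a matching, $u$ is either missed by $M_v$ as well (excluded, by assumption no maximum matching misses both) or $u$ has degree exactly $1$ in $D$, lying on an edge of $M_v$. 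Thus $u$ is an end-vertex of a path-component $P_u$ of $D$; similarly $v$ is an end-vertex of a path-component $P_v$ of $D$.

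The heart of the argument is to show $P_u=P_v$. Suppose not. Then $P_u$ is an even-length alternating path, one of whose end-vertices is $u$; let $w$ be its other end-vertex. By \Lr{matching-structure}(iii), $w$ lies in the same set $B_n\setminus B_{n-1}$ as $u$. Now consider the matching $M':=M_u\oplus P_u$. Switching along $P_u$ exchanges the $M_u$-edges and $M_v$-edges along $P_u$; since $P_u$ has even length and $u$ is currently unmatched by $M_u$ while $w$ is matched (within $P_u$) by $M_v$, after the switch $u$ becomes matched and $w$ becomes unmatched, while the matching status of every vertex outside $P_u$ is unchanged. I would then check that $M'$ is still a \emph{maximum} matching: its miss sequence agrees with that of $M_u$ up to index $n-1$ (nothing outside $B_{n-1}$ changed, and $P_u$ meets $B_n\setminus B_{n-1}$ only in $u$ and $w$), and at index $n$ the number of missed vertices of $B_n$ is unchanged (we lost the miss at $u$ and gained one at $w$); beyond index $n$ only finitely many entries could change but again the count of missed vertices in each $B_m$ is preserved because $P_u$ has exactly two end-vertices. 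Hence $M'\le M_u$ and, by a symmetric computation, $M_u\le M'$, so $M'$ is maximum. But $M'$ misses $v$ (since $P_u\ne P_v$, so $v\notin V(P_u)$, and $M_u$ already missed $v$... ) — wait, $M_u$ need not miss $v$; rather $M_v$ misses $v$. So instead I would apply the same switching move starting from $M_v$ along $P_v$ if $P_v\ne P_u$, obtaining a maximum matching missing $u$; the contradiction I actually want comes from switching appropriately so that the resulting maximum matching misses both $u$ and $v$ simultaneously, which is possible precisely because $P_u$ and $P_v$ are disjoint components: form $M_u\oplus P_u$, which is maximum and misses $w\in P_u$; iterating this trick (or, more cleanly, taking $M_u\oplus(P_u\cup\{\text{the }M_v\text{-edge at }v\})$ — this needs care) yields a maximum matching missing both $u$ and $v$, the desired contradiction.

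Therefore $P_u=P_v$, i.e.\ $u$ and $v$ are the two end-vertices of a single path-component $P$ of $M_u\oplus M_v$, and $P$ has even length and alternates with respect to $M_u$ and $M_v$, which is exactly the conclusion. The main obstacle I anticipate is the bookkeeping in the middle paragraph: making the "switch along a path-component and stay maximum" argument fully rigorous requires being careful that a path-component has \emph{exactly two} end-vertices lying in a common $B_n\setminus B_{n-1}$, so that the miss sequence is genuinely preserved index by index, and then combining two such switches (one for $u$, one for $v$) without them interfering — which works because distinct components are vertex-disjoint. Once the switching-preserves-maximality claim is nailed down, the contradiction with the hypothesis "no maximum matching misses both $u$ and $v$" is immediate, and the lemma follows.
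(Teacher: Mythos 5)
Your setup is correct and matches the paper's: both $u$ and $v$ have degree $1$ in $D:=M_u\oplus M_v$ (each is missed by one matching and, since no maximum matching misses both, covered by the other), so each is an end-vertex of a path-component, and by \Lr{matching-structure} these components are even alternating paths whose two end-vertices lie in a common $B_n\setminus B_{n-1}$. But the heart of the argument — producing a maximum matching that misses both $u$ and $v$ when the two path-components are distinct — is where your proposal breaks down, and none of the three moves you float actually works. The switch $M_u\oplus P_u$ makes $u$ \emph{matched} and unmatches the other end $w$ of $P_u$; since $M_u$ covers $v$ (it misses $u$, and no maximum matching misses both), the result covers both $u$ and $v$ and gives no contradiction. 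Likewise $M_v\oplus P_v$ matches $v$ and says nothing about $u$, so your claim that it "misses $u$" is false. And the object "$M_u\oplus(P_u\cup\{\text{the }M_v\text{-edge at }v\})$" is not defined: $M_v$ misses $v$, so there is no $M_v$-edge at $v$. "Iterating" the first switch only shuttles the missed vertex along endpoints of $P_u$ and never touches $v$.

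The missing idea is to \emph{cross over}: augment along the path containing $u$, but starting from the matching that misses $v$. Let $P$ be the component of $D$ with end-vertices $u$ and $y\neq v$, and form $M_v\oplus P$. Since $u$ is missed by $M_u$, the edge of $P$ at $u$ lies in $M_v$ and is deleted, so $u$ becomes unmatched; $y$ (which one checks is missed by $M_v$) becomes matched; $v\notin V(P)$ stays unmatched; and because $u,y\in B_n\setminus B_{n-1}$ the miss sequence is unchanged, so $M_v\oplus P$ is still maximum. This is a maximum matching missing both $u$ and $v$ — the desired contradiction, and it is exactly the paper's one-line argument. Your bookkeeping about endpoints lying in a common $B_n\setminus B_{n-1}$ is the right supporting observation; it just needs to be attached to this switch rather than the ones you propose.
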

\begin{proof}
	In $M_u \oplus M_v$, both vertices $u$ and $v$ have degree 1. So $u$ and $v$ are the end points of paths in $M_u \oplus M_v$. If they are the end points of the same path we are done. So suppose not, and let $P$ be the path with end points $u$ and $y \not = v$. Lemma \ref{matching-structure} implies $u,y \in B_n \backslash B_{n-1}$. Then $M_v \oplus P$ is a matching that misses $u$ and $v$. As $u,y \in B_n \backslash B_{n-1}$ it has the same miss sequence as $M_v$ and thus is a maximum matching. This contradicts the assumption that no maximum matching misses $u$ and $v$. Thus $M_u \oplus M_v$ contains a path with end-vertices $u,v$.
\end{proof}

We call a vertex $v \in V(\Gha)$ \emph{critical}, if no maximum matching misses $v$.

\begin{lem} \label{path-critical-missing}
	Let $\Gha$ be a connected countable graph and let $(B_n)_{n \in \bN}$  satisfy {\rm (\ref{B_n})}. Let $u,v \in V(\Gha)$ be distinct and $P$ a path with end-vertices $u$ and $v$. If no vertex in $V(P) \backslash \{u,v\}$ is critical, then no maximum matching misses both $u$ and $v$.
\end{lem}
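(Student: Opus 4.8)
The proof will go by induction on the length $k$ of the path $P$. The base case $k=1$ says: if $uv\in E(\Gha)$ then no maximum matching misses both $u$ and $v$. Indeed, if a maximum matching $M$ missed both, then $M\cup\{uv\}$ would still be a matching, it would miss exactly the same vertices as $M$ except that $u$ and $v$ become covered, so it agrees with $M$ on every $B_n$ containing neither $u$ nor $v$ and misses strictly fewer vertices of the first $B_n$ containing one of them; hence $M<M\cup\{uv\}$, contradicting maximality of $M$. It is worth recording this base case as the statement ``\emph{no maximum matching misses two adjacent vertices}'', which I will reuse inside the inductive step.

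For the inductive step, assume $k\ge 2$, write $P=u\,p_1\,p_2\cdots p_{k-1}\,v$, and suppose for contradiction that a maximum matching $M$ misses both $u$ and $v$. Two preliminary observations. First, $M$ must cover $p_1$: otherwise $M$ misses the adjacent pair $\{u,p_1\}$, contradicting the base case. Second, applying the inductive hypothesis to the sub-path $p_1p_2\cdots v$ (it has length $k-1$, and its interior vertices $p_2,\dots,p_{k-1}$ are interior vertices of $P$, hence not critical) shows that \emph{no maximum matching misses both $p_1$ and $v$}. Now $p_1$ is not critical, so fix a maximum matching $M'$ missing $p_1$; by the last sentence $M'$ covers $v$, and by the base case (applied to the edge $up_1$) $M'$ also covers $u$.

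The heart of the argument is surgery on $H:=M\oplus M'$. In $H$ the vertex $p_1$ has degree $1$ (covered by $M$, missed by $M'$), so the component $Q$ of $H$ containing $p_1$ is a path with $p_1$ as an end-vertex. Since $M$ and $M'$ are both maximum matchings, \Lr{matching-structure} applies: $Q$ has even length, its two end-vertices lie in a common shell $B_n\setminus B_{n-1}$, and $Q$ is alternating with respect to both $M$ and $M'$. Call the other end-vertex $z$. Because $Q$ starts at $p_1$ with an $M$-edge and has even length, it ends at $z$ with an $M'$-edge, so $M$ misses $z$ (while $M'$ covers it). Neither $u$ nor $v$ is an interior vertex of $Q$ (interior vertices of $Q$ have degree $2$ in $H$, hence are $M$-covered, whereas $M$ misses $u$ and $v$), and neither equals $p_1$; so each of $u,v$ either equals $z$ or lies outside $V(Q)$. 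Set $N:=M\oplus Q$. This is again a matching, it agrees with $M$ off $V(Q)$, it covers every vertex of $Q$ except $p_1$, and it leaves $z$ covered; hence $\mathrm{missed}(N)=(\mathrm{missed}(M)\setminus\{z\})\cup\{p_1\}$, and since $z$ and $p_1$ lie in the same $B_n\setminus B_{n-1}$ this does not change the miss sequence, so $N$ is maximum. But $N$ misses $p_1$, and it still misses whichever of $u,v$ is not equal to $z$: if $N$ misses $u$ we get a maximum matching missing the adjacent pair $\{u,p_1\}$, contradicting the base case; if $N$ misses $v$ we get a maximum matching missing both $p_1$ and $v$, contradicting the conclusion obtained from the inductive hypothesis. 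Either way we have a contradiction, which completes the induction.

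The only genuinely delicate point — and the one to write out carefully — is the interaction of the alternating-path surgery with the miss-sequence order: flipping along $Q$ trades the missed vertex $z$ for the newly missed vertex $p_1$, and this is invisible to the miss sequence precisely because \Lr{matching-structure} forces the two end-vertices of the flipped component to lie in the same shell $B_n\setminus B_{n-1}$. So the main obstacle is bookkeeping: keeping track of which shells contain $u$, $v$, $p_1$, $z$, and handling the degenerate overlaps ($z\in\{u,v\}$, or $u,v\notin V(Q)$) uniformly. Everything else is the classical augmenting-path combinatorics packaged by \Lr{matching-structure} and \Lr{max-matching}.
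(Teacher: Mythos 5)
Your proof is correct, and it takes a genuinely different route from the paper's. The paper also inducts on the length of $P$, but in the inductive step it picks an \emph{arbitrary} interior vertex $y$, applies the induction hypothesis to both subpaths $u\text{--}y$ and $y\text{--}v$, and then invokes \Lr{missing-two} twice: a maximum matching $N$ missing both $u$ and $v$ would force $N\oplus M_y$ to contain a path with end-vertices $u,y$ and another with end-vertices $v,y$; since $y$ lies in only one component of the symmetric difference these paths coincide, giving $u=v$, a contradiction. You instead fix the specific neighbour $p_1$ of $u$ on $P$, apply the induction hypothesis only to the subpath $p_1\text{--}v$, and replace the appeal to \Lr{missing-two} by an explicit flip $N=M\oplus Q$ along the component of $M\oplus M'$ containing $p_1$, which transfers the missed vertex from $z$ to $p_1$ without changing the miss sequence (because \Lr{matching-structure} places $p_1$ and $z$ in the same shell $B_n\setminus B_{n-1}$), and then contradict either the base case or the inductive conclusion. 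Your surgery step is essentially the same manoeuvre the paper performs inside the proof of \Lr{missing-two} (the flip $M_v\oplus P$ there), so the two arguments use the same toolkit; the paper's version is shorter because it delegates the flip to the already-proved \Lr{missing-two} and extracts the contradiction from a pure degree count at $y$, whereas yours is self-contained modulo \Lr{matching-structure} at the cost of re-deriving that transfer step and of the case analysis on whether $z\in\{u,v\}$ (which you handle correctly). All the delicate points — that $Q$ is a finite even path rather than a ray, that $N$ is again a matching, and that $N$ remains maximum — are verified or correctly cited.
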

\begin{proof}
	We apply induction to the length of the path $P$. If $P$ is just an edge, then if a matching missed $u$ and $v$ the addition of this edge would increase the size of the matching.
	
	Assume the length of~$P$ is at least~$2$ and let $y \in V(P) \backslash \{u,v\}$. As $y$ is not critical there is matching $M_y$ missing $y$. By the induction hypothesis we know that no matching misses both $u$ and $y$ or both $v$ and $y$. Suppose $N$ was a maximum matching missing both $u$ and $v$. By Lemma \ref{missing-two}, we know that $N \oplus M_y$ contains a path with end-vertices $u$ and $y$ and a path with end-vertices $v$ and $y$. This contradicts $u \not = v$, so no such matching $N$ exists.
\end{proof}


The following is an easy consequence of a standard compactness argument. 

\begin{lem} \label{miss-none-in-balls}
	Let $\Gha$ be a connected countable graph and let $(B_n)_{n \in \bN}$ satisfy {\rm (\ref{B_n})}. If there exists a sequence of matchings $(M_i)_{i \in \bN}$ of $\Gha$ such that $M_n$ misses no vertex in $B_n$, then $\Gha$ has a matching missing no vertex.
\end{lem}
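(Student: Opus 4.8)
The plan is a standard compactness argument, carried out by diagonalisation since $\Gha$ is countable. First I would fix an enumeration $e_1,e_2,\ldots$ of $E(\Gha)$ and view each matching as its indicator function in $\{0,1\}^{E(\Gha)}$. Starting from the given sequence $(M_i)_{i\in\bN}$ I would pass to nested subsequences: a subsequence on which ``$e_1\in M$'' is constant, then a further subsequence on which ``$e_2\in M$'' is constant, and so on; taking the diagonal subsequence $(M_{n_k})_{k\in\bN}$ gives a sequence along which the membership of every single edge is eventually constant. I would then set
\[
M := \{\, e\in E(\Gha) : e\in M_{n_k} \text{ for all sufficiently large } k \,\}.
\]

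Next I would verify the two required properties of $M$. That $M$ is a matching is immediate: if two edges $e,e'\in M$ shared an endpoint, then $e,e'\in M_{n_k}$ for all large $k$, contradicting that each $M_{n_k}$ is a matching. That $M$ misses no vertex is the heart of the argument: given $v\in V(\Gha)$, by \eqref{B_n} there is an $N$ with $v\in B_N$; for every $k$ with $n_k\geq N$ the matching $M_{n_k}$ misses no vertex of $B_{n_k}\supseteq B_N\ni v$, so some edge $f_k$ incident with $v$ lies in $M_{n_k}$. Since $\Gha$ is locally finite, $v$ is incident with only finitely many edges, so some fixed edge $f$ at $v$ equals $f_k$ for infinitely many $k$; as the membership of $f$ along $(M_{n_k})$ is eventually constant and $f$ lies in infinitely many of these matchings, we get $f\in M$, so $M$ covers $v$. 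Hence $M$ is a perfect matching. Note that only the properties $B_n\subseteq B_{n+1}$, $|B_n|<\infty$ and $\bigcup_n B_n=V(\Gha)$ of the sequence $(B_n)$ are used here, not connectivity of the $B_n$.

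The one place where care is needed is exactly this last step: passing from ``$v$ is covered by every late $M_{n_k}$'' to ``$v$ is covered by the limit $M$'' relies, via a pigeonhole, on $v$ having finite degree, i.e.\ on $\Gha$ being locally finite. This is no restriction in the context where \Lr{miss-none-in-balls} is applied inside the proof of \Tr{inf-vertex-matching}, since there $\Gha$ is locally finite (the non-locally-finite case of that theorem being disposed of separately, as in the proof of \Lr{sfwmc}). All the remaining points are a routine unwinding of the definitions.
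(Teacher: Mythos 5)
Your argument is correct for locally finite graphs, and it is essentially the paper's own compactness argument in different clothing: the paper extracts nested infinite index sets along which the restrictions of the $M_i$ to each finite set $B_n$ stabilise and takes the union of the stabilised pieces, whereas you extract nested subsequences along which the membership of each individual edge stabilises and take the set of eventually-present edges. The two bookkeepings produce the same limit object, and your verification that the limit is a matching and covers every vertex is fine.

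The substantive issue is the one you flag yourself, and it deserves to be taken more seriously than your parenthetical suggests. The pigeonhole at $v$ genuinely needs $d(v)<\infty$, and this is not an artefact of your method: the lemma as stated is \emph{false} without local finiteness. Take $V(\Gha)=\{v\}\cup\{u_i,w_i : i\in\bN\}$ with edges $vu_i$ and $u_iw_i$ for all $i$, and $B_n=\{v,u_1,w_1,\dots,u_n,w_n\}$; then $M_n=\{vu_{n+1}\}\cup\{u_iw_i : i\neq n+1\}$ misses only $w_{n+1}\notin B_n$, yet $\Gha$ has no perfect matching, since matching $v$ to some $u_i$ strands $w_i$. (The paper's proof has the same hidden hypothesis: ``finitely many matchings in $B_n$'' is only true for restrictions to edges meeting $B_n$ when those vertices have finite degree, while the induced-subgraph reading would not give that $\sM_n$ covers $B_n$.) Where I would correct you is on where this leaves a gap: the Appendix explicitly exempts itself from the paper's standing local finiteness assumption and applies \Lr{miss-none-in-balls} to an arbitrary countably infinite vertex transitive graph in proving \Tr{inf-vertex-matching}, so the non-locally-finite case is \emph{not} disposed of before the lemma is invoked --- it is only the main text (\Lr{sfwmc}) that never needs it. The missing case is easy to handle directly, though: in a connected vertex transitive graph that is not locally finite every vertex has infinite degree, so one can build a perfect matching greedily by running through an enumeration of the vertices and matching each still-unmatched vertex to an unmatched neighbour, which exists because only finitely many vertices have been used at any finite stage.
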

\begin{proof} 
	We inductively build subsequences $(M_i)_{i \in \sI_n}$ where $\sI_{n+1} \subset \sI_{n} \subset \bN$ are infinite subsets. Let $\sI_1 = \bN$. For $n \in \bN$ assume we have constructed $\sI_n$. Since $B_n$ is finite, there are only finitely many matchings in $B_n$. Therefore, there is an infinite sequence of matchings $(M_i)_{i \in \sI_n}$   all members of which induce the same matching $\sM_n$ on $B_n$. Let $\sI_{n+1}$ be the index set of that subsequence. As no $M_i$ with $i\geq n$ misses a vertex in $B_n$, the matching $\sM_n$ does not miss any vertex in $B_n$.
	
	Let $M = \cup_{n \in \bN} \sM_n$. As $\sM_n \subset \sM_{n+1}$ we have that $M$ is a matching in $\Gha$. By construction it covers all vertices of $\Gha$. 
\end{proof}

We now have all the tools we need to prove Theorem~\ref{inf-vertex-matching}.

\begin{proof}[Proof of Theorem~\ref{inf-vertex-matching}]
	Let $\Gha$ be a connected, countably infinite, vertex transitive graph. Let $(B_n)_{n \in \bN}$ satisfy {\rm (\ref{B_n})}. By vertex transitivity, either all vertices are critical or none are. If all are we are done, so suppose that no vertex of $\Gha$ is critical. By Lemma~\ref{path-critical-missing}, every maximum matching misses at most one vertex. Let $M$ be a maximal matching, which exists by Lemma~\ref{max-matching}, and $x \in V(G)$ be the vertex $M$ misses. As $\Gha$ is an infinite vertex transitive graph there exists an automorphism $\phi_n$ that maps $x$ to a vertex $y_n \not \in B_n$. Set $M_n = \phi_n(M)$ for $n \in \bN$. So $M_n$ misses only $y_n \not\in B_n$. Applying Lemma \ref{miss-none-in-balls} to $(M_n)_{n \in \bN}$ we obtain a matching that misses no vertex in~$\Gha$.
\end{proof}

\end{document}